\documentclass[a4paper, 10pt, notitlepage]{article}

\frenchspacing
\righthyphenmin=2
\sloppy

\usepackage{amsthm, amsmath, amssymb, latexsym}
\usepackage{mathrsfs,cite}

\theoremstyle{plain}
\newtheorem{theorem}{Theorem}[section]	
\newtheorem{lemma}[theorem]{Lemma}	
\newtheorem{corollary}[theorem]{Corollary}
\newtheorem{proposition}[theorem]{Proposition}


\theoremstyle{definition}
\newtheorem{definition}{Definition}
\newtheorem{example}{Example}

\theoremstyle{remark}
\newtheorem{remark}{Remark}

\DeclareMathOperator{\dom}{dom}
\DeclareMathOperator{\interior}{int}
\DeclareMathOperator{\argmin}{arg\,min}

\author{M.V. Dolgopolik}
\title{A Unifying Theory of Exactness of Linear Penalty Functions}

\begin{document}

\maketitle

\begin{abstract}
In this article we develop a theory of exact linear penalty functions that generalizes and unifies most of the results
on exact penalization existing in the literature. We discuss several approaches to the study of both locally and
globally exact linear penalty functions, and obtain various necessary and sufficient conditions for the exactness of 
a linear penalty function. We pay more attention than usual to necessary conditions that allows us to deeper
understand the exact penalty technique.
\end{abstract}

\section{Introduction}

Starting with the pioneering works by Eremin \cite{Eremin} and Zangwill \cite{Zangwill}, the method of exact linear
penalty functions has been advanced by many researchers (see~\cite{HanMangasarian, IoffeNSC,
Rosenberg,Pietrzykowski,Bertsekas,Mangasarian, Burke, Polyakova, WuBaiYangZhang, Antczak, Demyanov, Demyanov0,
DemyanovDiPilloFacchinei,DiPilloGrippo,DiPilloGrippo2,ExactBarrierFunc,Zaslavski,RubinovYang,DiPilloFacchinei_InCollect}
and references therein), and became a standard tool of constrained optimization. Innumerable applications, both
theoretical and practical, of this method to constrained extremum problems of almost all imaginable types proved the
efficiency of the exact penalization technique, and cannot be reviewed even in the scope of a book. We mention here only
the fact that exact penalty functions can be used to obtain both well-known and new optimality conditions for various
constrained optimization problems \cite{HanMangasarian, Burke, MengYang1, MengYang2}

In most of the articles devoted to the theory of exact linear penalty functions and its applications, a penalty function
is constructed and studied only for particular classes of optimization problems (or only a particular
class of penalty functions, such as penalty functions that include constraints via a distance function, is considered),
and its analysis is often based on the use of peculiarities of a specific setting. Thus, similar in nature results on
exact penalty functions are proved many times in different settings. The other feature of the works on exact linear
penalty functions is an almost complete lack of necessary conditions for exact penalization (apart from a
characterization of exactness via the calmness of a perturbed problem \cite{BurkeCalm}), which are very
important for deeper understanding of the method of exact penalty functions.

The main goal of this article is to develop a theory of exact \textit{linear} penalty functions that can unify and
generalize numerous results on exactness of linear penalty functions scattered in the literature. Furthemore, we aimed
to deepen the understanding of the mechanics of exact penalization by paying more attention, than usual, to necessary
conditions. This approach allowed us to obtain not only necessary conditions for exact penalization but also some
interesting new necessary and sufficient conditions that provide a complete characterization of the exactness of a
linear penalty function. 

A natural formulation of the general problem that is studied within the theory of exact linear penalty functions, and
that we adopt in this article, is as follows. For the given constrained optimization problem
$$
  \min f(x) \quad \text{subject to} \quad x \in M, \quad x \in A, \eqno (\mathcal{P})
$$
and the corresponding penalized problem
$$
  \min F_{\lambda}(x) \quad \text{subject to} \quad x \in A,
$$
study a relation between locally/globally optimal solutions of the initial problem $(\mathcal{P})$, and 
locally/globally optimal solutions of the penalized problem (which might not belong to the set of feasible points of the
problem $(\mathcal{P})$). Here $M$ and $A$ are nonempty subsets of a topological space $X$, 
$f \colon X \to \mathbb{R} \cup \{ + \infty \}$ and
$$
  F_{\lambda}(x) = f(x) + \lambda \varphi(x), \quad \lambda \ge 0,
$$
where $\varphi \colon X \to [0, +\infty)$ is \textit{the penalty term}, i.e. $\varphi$ is a given function such that 
$M = \{ x \in X \mid \varphi(x) = 0 \}$. Note that only a part of the constraints (namely, $x \in M$) is penalized, and
no assumptions are made on the penalty term $\varphi(x)$. Moreover, the fact that the objective function $f$ can take
infinite values allows one to include the results on exact penalty functions with barrier terms (see, for instance,
\cite{ExactBarrierFunc}) into the general theory. Thus, the formulation of the problem that we use is general enough to
include most of the results on exact linear penalty functions into the general theory.

The paper is organized as follows. Section~\ref{SectLocalTheory} is devoted to the study of ``local''
exactness of penalty functions. It includes a theorem on necessary and sufficient conditions for a penalty function
to be exact at a given locally optimal solution of the initial problem (Theorem~\ref{ThNSCLocMin}), as well as two
general techniques for obtaining sufficient (or necessary) conditions for local exact penalization. Both of these
techniques (error bounds and calmness of a perturbed problem) are well-known, and our exposition of this
subject is a straightforward generalization of many existing results. Various new necessary and sufficient conditions
for a penalty function to be globally exact under different assumptions on the domain of the penalty function are
studied in Section~\ref{SectGlobTheory}. This section also contains a description of several general approaches to the
study of globally exact penalty functions. Some general results on a very important from a practical point of view
problem of points of local minimum of a penalty function that do not belong to the set of feasible points of the initial
problem are briefly discussed in Section~\ref{SectLocMinOfPenFunc}.

Note that although many results presented in this paper are a direct generalization of existing results on exact
linear penalty functions, some concepts (such as nondegeneracy of a penalty function) and results (such as
Theorems~\ref{ThExPenFuncCompactSet}, \ref{ThExPenFiniteDim}, \ref{ThFinDimGenCase} and \ref{ThExactOnBoundedSets}, and
Lemma~\ref{LemmaEPF}) are completely new.

\section{Local Theory of Exact Penalty Functions}
\label{SectLocalTheory}

In this section, we discuss a relation between locally optimal solution of a constrained optimization problem and local
minimizers of a penalty function for this problem. If a locally optimal solution of the problem is a also a point of
local minimum of a penalty function, then the penalty function is said to be \textit{exact} at this point. We study
necessary and sufficient conditions for a penalty function to be exact at a locally optimal solution of a constrained
optimization problem, and present two general techniques for obtaining upper estimates of the least exact penalty
parameter. The first one is based on local error bounds, while the second technique is based on the concept of calmness
of a perturbed optimization problem.

\subsection{Exactness of Penalty Functions}

From this point onwards, let $X$ be a topological space, $M$, $A \subset X$ be nonempty sets, and 
$f \colon X \to \mathbb{R} \cup \{ +\infty \}$ be a given function. Throughout the paper we study the following
constrained optimization problem
$$
  \min f(x) \quad \text{subject to} \quad x \in M, \quad x \in A. \eqno (\mathcal{P})
$$
Denote by $\Omega = M \cap A$ the set of feasible points of this problem, and by $f^* = \inf_{x \in \Omega} f(x)$ 
the optimal value of the problem $(\mathcal{P})$.

\begin{remark}
Hereafter, we suppose that the restriction of $f$ on $\Omega$ is a proper function, i.e. there exists $x \in \Omega$
such that $f(x) < +\infty$. We also implicitly suppose that if $x^* \in \Omega$ is a locally optimal solution of 
the problem $(\mathcal{P})$, then $f(x^*) < +\infty$.
\end{remark}

Let $\varphi \colon X \to \mathbb{R}_+ = [0, +\infty)$ be a given nonnegative function such that
$$
  M = \{ x \in X \mid \varphi(x) = 0 \}.
$$
Some examples of such functions $\varphi$ are given further in the text. For any $\lambda \ge 0$ define the function
$$
  F_{\lambda}(x) = f(x) + \lambda \varphi(x), \quad \forall x \in X,
$$
that is called \textit{a} (\textit{linear}) \textit{penalty function} for the problem $(\mathcal{P})$. Note
that for any $x \in X$ the penalty function $F_\lambda$ is nondecreasing in $\lambda$.

\begin{remark}
To simplify the exposition of the subject, we suppose that the penalty term $\varphi$, unlike the objective function
$f$, takes only finite values; however, this assumption is not restrictive. Indeed, if $\varphi(x) = +\infty$ for some 
$x \notin \Omega$, then one can redefine $f$ and $\varphi$, so that $f(x) = +\infty$ and $\varphi(x) < +\infty$ for
these $x \notin \Omega$, without changing optimal solution of the problem $(\mathcal{P})$, and the penalty function. 
\end{remark}

Alongside the problem $(\mathcal{P})$, we consider the following penalized problem
$$
  \min F_{\lambda}(x) \quad \text{subject to} \quad x \in A.
$$
It is easy to see that if $x^* \in \Omega$ is a locally optimal solution of the above problem, then $x^*$ is a
locally optimal solution of the problem $(\mathcal{P})$. However, the converse statement is not true in the general
case, unless $x^*$ belongs to the interior $\interior \Omega$ of the set of feasible points $\Omega$.

\begin{definition}
Let $x^*$ be a locally optimal solution of the problem $(\mathcal{P})$. The penalty function $F_{\lambda}$ is said to
be (locally)\textit{exact} (or to have \textit{the exact penalty property}) at the point $x^*$, if there exists
$\lambda^* \ge 0$ such that $x^*$ is a point of local minimum of $F_{\lambda^*}$ on the set $A$.
\end{definition}

From the fact that the penalty function is nondecreasing in $\lambda$ it follows that if $F_{\lambda}$ is
exact at a point $x^*$, then for any $\mu \ge \lambda^*$ the point $x^*$ is a local minimizer of $F_{\mu}$ on $A$,
where $\lambda^* \ge 0$ is from the definition of the exact penalty property. Denote the greatest lower bound of all
such $\lambda^* \ge 0$ by $\lambda^*(x^*, f, \varphi)$ or simply by $\lambda^*(x^*)$, if the functions $f$ and $\varphi$
are fixed. The quantity $\lambda^*(x^*, f, \varphi)$ is called \textit{the least exact penalty parameter} (of the
penalty function $F_{\lambda}$) at the point $x^*$.

It is obvious that for any $\lambda > \lambda^*(x^*)$ the point $x^*$ is a local minimizer of the penalty function
$F_{\lambda}$ on $A$, and for all $\lambda \in [0, \lambda^*(x^*))$ the point $x^*$ is not a local minimizer of
$F_{\lambda}$ on $A$. 

Let us mention here two simple yet useful properties of penalty functions. For the sake of completeness we provide
proofs of these results.

\begin{proposition} \label{PrpUniformLocalPenal}
Let the penalty function $F_{\lambda}$ be exact at a locally optimal solution $x^*$ of 
the problem $(\mathcal{P})$. Then for any $\lambda_0 > \lambda^*(x^*)$ there exists a neighbourhood $U$ of $x^*$
such that
$$
  F_{\lambda}(x) \ge F_{\lambda}(x^*) \quad \forall x \in U \cap A \quad \forall \lambda \ge \lambda_0,
$$
i.e. the neighbourhood $U$ does not depend on $\lambda \ge \lambda_0$.
\end{proposition}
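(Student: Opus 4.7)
The plan is to exploit two facts simultaneously: that $\varphi(x^*)=0$, so $F_\lambda(x^*)=f(x^*)$ does not depend on $\lambda$; and that, by definition of $\lambda^*(x^*)$ as a \emph{greatest lower bound}, exactness already holds for some penalty parameter strictly smaller than $\lambda_0$. The neighbourhood produced by that smaller parameter will then serve uniformly for every $\lambda \ge \lambda_0$.

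Concretely, I would first pick an auxiliary value $\lambda_1$ with $\lambda^*(x^*) < \lambda_1 < \lambda_0$, which is possible since $\lambda_0 > \lambda^*(x^*)$. By the definition of $\lambda^*(x^*)$ (and the remark preceding the proposition, namely that any parameter strictly above $\lambda^*(x^*)$ already yields local minimality), $x^*$ is a local minimizer of $F_{\lambda_1}$ on $A$. Hence there is a neighbourhood $U$ of $x^*$ with $F_{\lambda_1}(x) \ge F_{\lambda_1}(x^*)$ for every $x \in U \cap A$. I claim this $U$ works for all $\lambda \ge \lambda_0$.

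To verify the claim, fix $\lambda \ge \lambda_0$ and $x \in U \cap A$, and write
$$
F_\lambda(x) \;=\; F_{\lambda_1}(x) + (\lambda - \lambda_1)\varphi(x).
$$
Since $\varphi \ge 0$ and $\lambda - \lambda_1 > 0$, the second summand is nonnegative, so $F_\lambda(x) \ge F_{\lambda_1}(x) \ge F_{\lambda_1}(x^*)$. Finally, because $x^* \in \Omega \subset M$ gives $\varphi(x^*) = 0$, we have $F_{\lambda_1}(x^*) = f(x^*) = F_\lambda(x^*)$, which closes the inequality.

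I do not anticipate a real obstacle here; the only subtlety to state carefully is the use of the infimum characterization of $\lambda^*(x^*)$ to insert a buffer value $\lambda_1$ strictly between $\lambda^*(x^*)$ and $\lambda_0$. The monotonicity of $F_\lambda$ in $\lambda$, already noted in the text, together with $\varphi(x^*)=0$, is what decouples $U$ from $\lambda$.
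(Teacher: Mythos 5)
Your argument is correct and follows essentially the same route as the paper: local minimality of the penalty function for one fixed parameter exceeding $\lambda^*(x^*)$, combined with monotonicity of $F_{\lambda}$ in $\lambda$ and $\varphi(x^*)=0$. The only cosmetic difference is your buffer value $\lambda_1$, which is unnecessary since $\lambda_0 > \lambda^*(x^*)$ already guarantees that $x^*$ is a local minimizer of $F_{\lambda_0}$ itself, and the paper works directly with $\lambda_0$.
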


\begin{proof}
Choose an arbitrary $\lambda_0 > \lambda^*(x^*)$. By the definition of the least exact penalty parameter
$\lambda^*(x^*)$, the point $x^*$ is a local minimizer of $F_{\lambda_0}$ on the set $A$. Consequently, there exists a
neighbourhood $U$ of $x^*$ such that $F_{\lambda_0}(x^*) \le F_{\lambda_0}(x)$ for all $x \in U \cap A$. Then taking
into account the facts that $F_{\lambda}$ is nondecreasing in $\lambda$, and $F_{\lambda}(x^*) = f(x^*)$ for all 
$\lambda \ge 0$ one obtains the desired result.
\end{proof}

\begin{proposition}
Let $x^*$ be a locally optimal solution of the problem $(\mathcal{P})$, and $\psi \colon X \to \mathbb{R}_+$ be a
function such that $M = \{ x \in X \mid \psi(x) = 0 \}$ and $\psi \le \varphi$ in a neighbourhood of $x^*$. If the
penalty function $\Phi_{\lambda} = f + \lambda \psi$ is exact at $x^*$, then the penalty function $F_{\lambda}$ is also
exact at this point and
$$
  \lambda^*(x^*, f, \psi) \ge \lambda^*(x^*, f, \varphi).
$$
\end{proposition}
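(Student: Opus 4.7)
The plan is to exploit the pointwise inequality $\Phi_\lambda \le F_\lambda$ that holds locally around $x^*$ for every $\lambda \ge 0$, together with the fact that both penalty functions agree with $f$ at $x^*$ itself. First I would record the two elementary observations: since $x^*$ is a locally optimal solution of $(\mathcal{P})$, it is feasible, hence $x^* \in M$ and therefore $\varphi(x^*) = \psi(x^*) = 0$, which gives $\Phi_\lambda(x^*) = F_\lambda(x^*) = f(x^*)$ for all $\lambda \ge 0$. Next, by hypothesis there is a neighbourhood $V_0$ of $x^*$ on which $\psi(x) \le \varphi(x)$, so for any $\lambda \ge 0$ and any $x \in V_0$ we have $\Phi_\lambda(x) \le F_\lambda(x)$.

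The main step is then to transfer a local minimum of $\Phi_\lambda$ into one of $F_\lambda$. Pick any $\lambda \ge \lambda^*(x^*, f, \psi)$; by definition of the least exact penalty parameter, $x^*$ is a point of local minimum of $\Phi_\lambda$ on $A$, so there exists a neighbourhood $V_1$ of $x^*$ with $\Phi_\lambda(x) \ge \Phi_\lambda(x^*)$ for all $x \in V_1 \cap A$. Setting $U = V_0 \cap V_1$, for every $x \in U \cap A$ one obtains the chain
\[
  F_\lambda(x) \;\ge\; \Phi_\lambda(x) \;\ge\; \Phi_\lambda(x^*) \;=\; F_\lambda(x^*),
\]
which shows that $x^*$ is a point of local minimum of $F_\lambda$ on $A$. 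Hence $F_\lambda$ is exact at $x^*$, and the infimum of all admissible penalty parameters for $F_\lambda$ is no larger than that for $\Phi_\lambda$, giving $\lambda^*(x^*, f, \varphi) \le \lambda^*(x^*, f, \psi)$.

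There is no serious obstacle here; the only point requiring a moment of care is making sure that the neighbourhood $V_0$ on which $\psi \le \varphi$ is intersected with the neighbourhood $V_1$ coming from local minimality of $\Phi_\lambda$, so that the inequality $\Phi_\lambda \le F_\lambda$ is actually available where it is needed. The equality $\Phi_\lambda(x^*) = F_\lambda(x^*)$, which is what makes the argument work, is in turn a consequence of the common zero-set description $M = \{\psi = 0\} = \{\varphi = 0\}$ combined with $x^* \in \Omega \subseteq M$.
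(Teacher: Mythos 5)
Your argument is essentially the paper's proof: intersect the neighbourhood on which $\psi \le \varphi$ with the neighbourhood of local minimality of $\Phi_\lambda$, use $\Phi_\lambda(x^*) = F_\lambda(x^*) = f(x^*)$ (which holds because $x^* \in M$, so $\psi(x^*) = \varphi(x^*) = 0$), and conclude via the chain $F_\lambda(x) \ge \Phi_\lambda(x) \ge \Phi_\lambda(x^*) = F_\lambda(x^*)$ on the intersection.

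One technical slip: you write ``pick any $\lambda \ge \lambda^*(x^*, f, \psi)$; by definition of the least exact penalty parameter, $x^*$ is a point of local minimum of $\Phi_\lambda$ on $A$.'' The least exact penalty parameter is an \emph{infimum}, and local minimality at $\lambda = \lambda^*(x^*, f, \psi)$ itself is not guaranteed by the definition; the paper explicitly warns about this (Remark~\ref{RmrkLocalExPenPar} and the example preceding it, where $x^* = 0$ fails to be a local minimizer of $F_\lambda$ at $\lambda = \lambda^*(x^*) = 2$). The repair is exactly what the paper does: take an arbitrary $\lambda_0 > \lambda^*(x^*, f, \psi)$ (strict inequality), run your chain of inequalities to obtain $\lambda^*(x^*, f, \varphi) \le \lambda_0$, and then let $\lambda_0$ decrease to $\lambda^*(x^*, f, \psi)$ to get the stated inequality. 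With this one change your proof coincides with the paper's and is complete.
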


\begin{proof}
Let $\lambda_0 > \lambda^*(x^*, f, \psi)$ be arbitrary, and $U \subset X$ be a neighbourhood of $x^*$ such that 
$\psi(x) \le \varphi(x)$ for all $x \in U$. By the definition of $\lambda^*(x^*, f, \psi)$ there exists a neighbourhood
$V \subset U$ of $x^*$ such that $\Phi_{\lambda_0}(x) \ge \Phi_{\lambda_0}(x^*)$ for all $x \in V \cap A$. Then applying
the estimate $\psi \le \varphi$, and taking into account the fact that 
$\Phi_{\lambda_0}(x^*) = F_{\lambda_0}(x^*) = f(x^*)$ one obtains that 
$$
  F_{\lambda_0}(x) \ge \Phi_{\lambda_0}(x) \ge F_{\lambda_0}(x^*) \quad \forall x \in V \cap A.
$$
Therefore the penalty function $F_{\lambda}$ is exact and $\lambda^*(x^*, f, \varphi) \le \lambda_0$, which obviously
implies that $\lambda^*(x^*, f, \psi) \ge \lambda^*(x^*, f, \varphi)$.
\end{proof}

The proposition above can be reformulated as follows. If the penalty function $\Phi_{\lambda} = f + \lambda \psi$, where
$\psi$ is some local lower estimate of the function $\varphi$, is exact at a point $x^*$, then $F_{\lambda}$ is also
exact at this point, and the least exact penalty parameter of $F_{\lambda}$ at $x^*$ is not greater than the least exact
penalty parameter of $\Phi_{\lambda}$ at this point.

\begin{remark}[Nonsmoothness of Exact Penalty Functions]
Let $X$ be a normed space. Note than if $F_{\lambda}$ is exact at a locally optimal solution $x^*$ of the problem
$(\mathcal{P})$, then, in the general case, the function $\varphi$ must be nonsmooth at $x^*$.
In other words, if $\varphi$ is smooth, then $F_{\lambda}$ is not exact in the general case. 

Indeed, let $\varphi$ be smooth at $x^*$, and suppose for the sake of simplicity that $A = X$. Observe that $x^*$ is a
point of global minimum of $\varphi$ by virtue of the facts that $\varphi$ is non-negative, and $\varphi(x^*) = 0$.
Therefore $\varphi'(x^*) = 0$, where $\varphi'(x^*)$ is a G\^{a}teaux (or Fr\'{e}chet) derivative of $\varphi$ at
$x^*$. 

The penalty function $F_{\lambda}$ is exact at $x^*$ or, equivalently, $x^*$ is a point of local minimum of
$F_{\lambda}$ for any sufficiently large $\lambda \ge 0$. Therefore $x^*$ must satisfy a first order necessary
optimality condition for $F_{\lambda}$ for any $\lambda \ge 0$ large enough. Hence, with the use of the fact that 
$\varphi'(x^*) = 0$, and $\varphi$ is smooth (i.e. its derivative is continuous), one can show that $x^*$ must also
satisfy the same first order necessary optimality condition for the function $f$, which is usually not the case because
$x^*$ is a locally optimal solution of a constrained optimization problem.

Thus, nonsmoothness of $\varphi$ must occur, when $x^*$ is a locally optimal solution of the constrained problem
$(\mathcal{P})$, while is not a point of unconstrained local minimum of $f$, what happens quite often. 
\end{remark}

\subsection{Necessary and Sufficient Condition for Local Exact Penalization}

Let $x^*$ be a locally optimal solution of the problem $(\mathcal{P})$. Intuitively, for the point $x^*$ to be a local
minimizer of the penalty function $F_{\lambda}$ on $A$ it is necessary that the rate of increase of the function
$\varphi$ outside the set $M = \{ x \in X \mid \varphi(x) = 0 \}$ near $x^*$ is ``greater'' than the rate of decrease of
the function $f$ outside $M$ near $x^*$. In order to clarify this idea we introduce the following quantity. For any 
$x \in M$ set
$$
  \overline{\lambda}(x, f, \varphi) = 
  \limsup_{y \to x, y \in A \setminus M} \frac{f(x) - f(y)}{\varphi(y) - \varphi(x)}.
$$ 
If $x$ is not a limit point of $A \setminus M$, then $\overline{\lambda}(x, f, \varphi) = -\infty$ by definition. In the
case when $f$ and $\varphi$ are fixed we write $\overline{\lambda}(x)$ instead of $\overline{\lambda}(x, f, \varphi)$.

The quantity $\overline{\lambda}(x)$ characterizes the rate of decrease of the function $f$ on the set $A \setminus M$
near a point $x \in M$ with respect to the rate of increase of the function $\varphi$ on $A \setminus M$ near $x$. The
following theorem gives a necessary and sufficient condition for a penalty function to be locally exact in terms of
$\overline{\lambda}(x)$.

\begin{theorem} \label{ThNSCLocMin}
Let $x^*$ be a locally optimal solution of the problem $(\mathcal{P})$. For the penalty function 
$F_{\lambda} = f + \lambda \varphi$ to be exact at the point $x^*$ it is necessary and sufficient that 
$\overline{\lambda}(x^*) < + \infty$. Furthermore, if $F_{\lambda}$ is exact at the point $x^*$, then 
$\lambda^*(x^*) = \max\{ \overline{\lambda}(x^*), 0 \}$.
\end{theorem}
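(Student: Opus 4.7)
The plan is to establish both inequalities $\lambda^*(x^*)\le\max\{\overline{\lambda}(x^*),0\}$ and $\lambda^*(x^*)\ge\max\{\overline{\lambda}(x^*),0\}$; the two together give exactness exactly when $\overline{\lambda}(x^*)<+\infty$ and simultaneously identify $\lambda^*(x^*)$. The key algebraic observation driving everything is that since $x^*\in M$, one has $\varphi(x^*)=0$ and $F_{\lambda}(x^*)=f(x^*)$, so for $y\in A\setminus M$ the inequality $F_{\lambda}(y)\ge F_{\lambda}(x^*)$ is equivalent, after dividing by $\varphi(y)>0$, to $(f(x^*)-f(y))/\varphi(y)\le\lambda$. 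Thus the limsup defining $\overline{\lambda}(x^*)$ is really the sharp threshold for the desired penalty inequality to hold locally.

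For sufficiency (and the upper bound on $\lambda^*(x^*)$), I would fix an arbitrary $\lambda_0>\max\{\overline{\lambda}(x^*),0\}$. If $x^*$ is a limit point of $A\setminus M$, then by definition of limsup there is a neighbourhood $U_1$ of $x^*$ such that $(f(x^*)-f(y))/\varphi(y)\le\lambda_0$ for every $y\in U_1\cap(A\setminus M)$, which rearranges to $F_{\lambda_0}(y)\ge f(x^*)=F_{\lambda_0}(x^*)$. If $x^*$ is not a limit point of $A\setminus M$, there is $U_1$ with $U_1\cap A\subset M$; either way the inequality $F_{\lambda_0}(y)\ge F_{\lambda_0}(x^*)$ is handled on $U_1\cap(A\setminus M)$. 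On the remaining part $U_2\cap M\cap A=U_2\cap\Omega$, local optimality of $x^*$ in $(\mathcal{P})$ supplies a neighbourhood $U_2$ with $f(y)\ge f(x^*)$, giving $F_{\lambda_0}(y)=f(y)\ge F_{\lambda_0}(x^*)$. Intersecting $U_1\cap U_2$ shows $x^*$ is a local minimizer of $F_{\lambda_0}$ on $A$, hence $\lambda^*(x^*)\le\lambda_0$, and since $\lambda_0$ was arbitrary, $\lambda^*(x^*)\le\max\{\overline{\lambda}(x^*),0\}$.

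For necessity, assume $F_\lambda$ is exact at $x^*$ and pick any $\lambda_0>\lambda^*(x^*)$. Proposition~\ref{PrpUniformLocalPenal} (or just the definition of $\lambda^*(x^*)$) yields a neighbourhood $U$ with $F_{\lambda_0}(y)\ge F_{\lambda_0}(x^*)=f(x^*)$ for all $y\in U\cap A$. For $y\in U\cap(A\setminus M)$ this gives $(f(x^*)-f(y))/\varphi(y)\le\lambda_0$; taking the limsup over such $y\to x^*$ yields $\overline{\lambda}(x^*)\le\lambda_0$. Letting $\lambda_0\downarrow\lambda^*(x^*)$ produces $\overline{\lambda}(x^*)\le\lambda^*(x^*)<+\infty$. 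Combined with the trivial $\lambda^*(x^*)\ge 0$, this gives $\lambda^*(x^*)\ge\max\{\overline{\lambda}(x^*),0\}$, closing the circle.

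I do not anticipate a genuine obstacle; the only point requiring care is the degenerate case in which $x^*$ is an isolated point of $A$ relative to $A\setminus M$, where $\overline{\lambda}(x^*)=-\infty$ by convention, so that $\max\{\overline{\lambda}(x^*),0\}=0$ and exactness holds at $\lambda=0$ directly from local optimality on $\Omega$. Handling this case separately in the sufficiency argument (as above) keeps the division by $\varphi(y)$ legitimate everywhere else.
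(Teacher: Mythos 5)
Your proof is correct and follows essentially the same route as the paper: rewrite $F_{\lambda}(y)\ge F_{\lambda}(x^*)$ as the quotient bound $(f(x^*)-f(y))/\varphi(y)\le\lambda$, combine with local optimality on $\Omega$ for sufficiency, and read the necessity off the definition of $\lambda^*(x^*)$, with the isolated-point case treated separately as in the paper. The only (harmless) difference is that you obtain the lower bound $\lambda^*(x^*)\ge\max\{\overline{\lambda}(x^*),0\}$ directly from the necessity inequality $\overline{\lambda}(x^*)\le\lambda^*(x^*)$ together with $\lambda^*(x^*)\ge 0$, whereas the paper additionally argues explicitly that $x^*$ fails to be a local minimizer of $F_{\lambda}$ for $0<\lambda<\overline{\lambda}(x^*)$; both yield the stated equality.
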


\begin{proof}
It is easy to see that if $x^*$ is not a limit point of $A \setminus M$, then $F_{\lambda}$ is exact at $x^*$ and
$\lambda^*(x^*) = 0$. Therefore we can suppose that $x^*$ is a limit point of the set $A \setminus M$.

Necessity. Fix $\lambda > \lambda^*(x^*)$. By the definition of the least exact penalty parameter $\lambda^*(x^*)$ there
exists a neighbourhood $U$ of $x^*$ such that
$$
  f(x^*) + \lambda \varphi(x^*) = F_{\lambda}(x^*) \le F_{\lambda}(y) = f(y) + \lambda \varphi(y) \quad
  \forall y \in U \cap A,
$$
which yields
$$
  \frac{f(x^*) - f(y)}{\varphi(y) - \varphi(x^*)} \le \lambda \quad \forall y \in U \cap (A \setminus M)
$$
(note that $\varphi(x^*) = 0$, since $x^* \in M$). Therefore $\overline{\lambda}(x^*) \le \lambda < + \infty$, and
$\overline{\lambda}(x^*) \le \lambda^*(x^*)$. 

Sufficiency. Fix an arbitrary $\lambda > \max\{ 0, \overline{\lambda}(x^*) \}$. Then there exists a neighbourhood $U$
of the point $x^*$ such that
$$
  \frac{f(x^*) - f(y)}{\varphi(y) - \varphi(x^*)} \le \lambda \quad \forall x \in U \cap (A \setminus M)
$$
or, equivalently,
\begin{equation} \label{PenFuncOutNearPoint}
  f(x^*) + \lambda \varphi(x^*) = F_{\lambda}(x^*) \le F_{\lambda}(y) = f(y) + \lambda \varphi(y) \quad
  \forall y \in U \cap (A \setminus M).
\end{equation}
Since $x^*$ is a locally optimal solution of the problem $(\mathcal{P})$, there exists a neighbourhood $V$ of $x^*$ such
that
\begin{equation} \label{PenFuncInNearPoint}
  f(x^*) \le f(y) \quad \forall y \in V \cap \Omega = V \cap (A \cap M).
\end{equation}
Define $U_0 = U \cap V$. The set $U_0$ is a neighbourhood of $x^*$. Taking into account (\ref{PenFuncOutNearPoint}),
(\ref{PenFuncInNearPoint}), and the fact that $\varphi(x) = 0$ for any $x \in M$ one gets that
$F_{\lambda}(x^*) \le F_{\lambda}(y)$  for all $y \in U_0 \cap A$. Therefore $x^*$ is a point of local minimum of
$F_{\lambda}$ on $A$, and $F_{\lambda}$ is exact at this point.

Note that from the proof of ``sufficiency'' part of the theorem it follows that 
$\lambda^*(x^*) \le \max\{ 0, \overline{\lambda}(x^*) \}$. Let us show that the equality 
$\lambda^*(x^*) = \max\{ \overline{\lambda}(x^*), 0 \}$ holds true. Indeed, if 
$\overline{\lambda}(x^*) \le 0$, then from the proof of the sufficiency part of the theorem it follows that for any
$\lambda > 0$ the point $x^*$ is a local minimizer of $F_{\lambda}$ on $A$, which yields the equality 
$\lambda^*(x^*) = 0$. 

Let now $\overline{\lambda}(x^*) > 0$, and fix an arbitrary $\lambda \in ( 0, \overline{\lambda}(x^*) )$. Then by 
the definition of limit superior one gets that for any neighbourhood $U$ of $x^*$ there 
exists $y \in U \cap (A \setminus M)$ such that
$$
  \frac{f(x^*) - f(y)}{\varphi(y) - \varphi(x^*)} > \lambda,
$$
which implies that $F_{\lambda}(x^*) > F_{\lambda}(y)$. Hence $x^*$ is not a point of local minimum of $F_{\lambda}$ on
$A$, and $\lambda^*(x^*) \ge \lambda$ for any $\lambda \in (0, \overline{\lambda}(x^*))$. Thus, 
$\lambda^*(x^*) = \overline{\lambda}(x^*)$, and the proof is complete.
\end{proof}

\subsection{Error Bounds and Local Exact Penalization}
\label{SubSecUpperEstim}

Theorem~\ref{ThNSCLocMin} provides a complete characterization of the exact penalty property in terms of the quantity
$\overline{\lambda}(x, f, \varphi)$; however, a direct computation of $\overline{\lambda}(x, f, \varphi)$ is very
difficult in nontrivial cases. In this subsection, we discuss a general technique for obtaining upper estimates of 
$\overline{\lambda}(x, f, \varphi)$ via error bounds, i.e. local estimates of the functions $f$ and $\varphi$ based on
the distance function $d(x, \Omega)$. This technique allows one to obtains many well--known results on exact penalty
functions as simple corollaries to Theorem~\ref{ThNSCLocMin}.

Let $(X, d)$ be a metric space. For any nonempty set $C \subset X$ and for all $x \in X$ 
denote by $d(x, C) = \inf_{y \in C} d(x, y)$ the distance between $x$ and $C$. Denote by 
$B(x, r) = \{ y \in X \mid d(x, y) \le r \}$ the closed ball with centre $x$ and radius $r > 0$, and by 
$U(x, r) = \{ y \in X \mid d(x, y) < r \}$ the open ball with the same centre and radius.

\begin{theorem} \label{ThUpEstimPenPar}
Let $(X, d)$ be a metric space, and $x^*$ be a locally optimal solution of the problem 
$(\mathcal{P})$. Suppose that there exist $r > 0$ and functions 
$\omega, \eta \colon \mathbb{R}_+ \to \mathbb{R}_+$ satisfying the following conditions:
\begin{enumerate}
\item{$\omega(0) = 0$, $\eta(0) = 0$ and $\eta(t) > 0$ for all $t > 0$;
}

\item{$f(y) \ge f(x^*) - \omega( d(y, \Omega) )$ for any $y \in B(x^*, r) \cap (A \setminus M)$;
}

\item{$\varphi(x) \ge \eta( d(x, \Omega) )$ for any $x \in B(x^*, r) \cap A$;
\label{ErrorBound}}

\item{$\limsup_{t \to +0} \frac{\omega(t)}{\eta(t)} =: \sigma < +\infty$.
}
\end{enumerate}
Then the penalty function $F_{\lambda}$ is exact at $x^*$, and $\lambda^*(x^*) \le \sigma$.
\end{theorem}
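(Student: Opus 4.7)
The plan is to reduce everything to the characterization in Theorem~\ref{ThNSCLocMin}: it will suffice to prove $\overline{\lambda}(x^*) \le \sigma$, because the ratio $\omega/\eta$ is non-negative, so $\sigma \ge 0$, and hence $\lambda^*(x^*) = \max\{\overline{\lambda}(x^*), 0\} \le \sigma$ by that theorem.

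First, I would fix an arbitrary $y \in B(x^*, r) \cap (A \setminus M)$ and combine hypotheses~2 and~3. Hypothesis~2 rewrites as $f(x^*) - f(y) \le \omega(d(y, \Omega))$, while hypothesis~3 (applied to $y \in A$) gives $\varphi(y) \ge \eta(d(y, \Omega))$; note also that $\varphi(y) > 0$ since $y \notin M$. I would split on whether $d(y,\Omega)$ vanishes. If $d(y,\Omega) > 0$, then $\eta(d(y,\Omega)) > 0$ by hypothesis~1, and a direct division gives
\[
  \frac{f(x^*) - f(y)}{\varphi(y)} \le \frac{\omega(d(y, \Omega))}{\eta(d(y, \Omega))}.
\]
If instead $d(y,\Omega) = 0$, then hypothesis~2 yields $f(x^*) - f(y) \le \omega(0) = 0$, so the ratio on the left is non-positive.

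Next I would pass to the limit as $y \to x^*$ along $A \setminus M$. Since $x^* \in \Omega$, one has $d(y,\Omega) \le d(y,x^*) \to 0$. Given $\varepsilon > 0$, hypothesis~4 supplies $\delta \in (0, r]$ such that $\omega(t)/\eta(t) \le \sigma + \varepsilon$ for all $t \in (0, \delta]$. For every $y$ in a small enough neighbourhood of $x^*$ intersected with $A \setminus M$, the two cases above therefore yield $(f(x^*) - f(y))/\varphi(y) \le \sigma + \varepsilon$. Taking the $\limsup$ and letting $\varepsilon \to 0$ gives $\overline{\lambda}(x^*) \le \sigma < +\infty$, so Theorem~\ref{ThNSCLocMin} delivers both exactness of $F_\lambda$ at $x^*$ and the bound $\lambda^*(x^*) \le \sigma$.

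The only nontrivial step is converting the one-variable condition $\limsup_{t \to +0}\omega(t)/\eta(t) = \sigma$ into control of the multi-variable $\limsup$ that defines $\overline{\lambda}(x^*)$. The monotone dependence $d(y,\Omega) \to 0$ as $y \to x^*$, together with the degenerate case $d(y,\Omega)=0$ being harmless, is precisely what allows this transport; everything else is bookkeeping based on the two error bounds.
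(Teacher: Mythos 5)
Your proposal is correct and follows essentially the same route as the paper's proof: combine hypotheses 2 and 3 to bound the ratio $\bigl(f(x^*)-f(y)\bigr)/\varphi(y)$ by $\omega(d(y,\Omega))/\eta(d(y,\Omega))$ when $d(y,\Omega)>0$, treat the case $d(y,\Omega)=0$ separately, pass to the limit using $d(y,\Omega)\le d(y,x^*)\to 0$, and invoke Theorem~\ref{ThNSCLocMin}. Your write-up merely makes explicit the $\varepsilon$--$\delta$ transfer from the one-variable limsup to $\overline{\lambda}(x^*)$, a step the paper leaves as "easy".
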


\begin{proof}
From the assumptions of the theorem it follows that for any $y \in B(x^*, r) \cap (A \setminus M)$ such that 
$d(y, \Omega) > 0$ one has
\begin{equation} \label{UpperEstim}
  \frac{f(x^*) - f(y)}{\varphi(y) - \varphi(x^*)} \le \frac{\omega( d(y, \Omega) )}{\eta(d(y, \Omega))}.
\end{equation}
On the other hand, if $y \in B(x^*, r) \cap (A \setminus M)$ and $d(y, \Omega) = 0$, then 
$f(x^*) - f(y) \le \omega( d(y, \Omega) ) = \omega(0) = 0$. Thus,
$$
  \overline{\lambda}(x^*) = 
  \limsup_{y \to x^*, y \in A \setminus M} \frac{f(x^*) - f(y)}{\varphi(y) - \varphi(x^*)} \le
  \limsup_{y \to x^*, y \in A \setminus M} h(y),
$$
where $h(y) = \omega( d(y, \Omega) ) / \eta( d(y, \Omega) )$, if $d(y, \Omega) > 0$, and $h(y) = 0$, otherwise.
Hence, as it is easy, one has
$$
  \overline{\lambda}(x^*) \le \limsup_{y \to x, y \in A \setminus M} h(y) \le 
  \limsup_{t \to +0} \frac{\omega(t)}{\eta(t)} = \sigma.
$$
It remains to apply Theorem~\ref{ThNSCLocMin}.  
\end{proof}

\begin{remark}
{(i) Note that assumption~\ref{ErrorBound} in the theorem above is an assumption on the existence of a local nonlinear
error bound for the penalty term $\varphi$. Therefore Theorem~\ref{ThUpEstimPenPar} establishes a connection between
Theorem~\ref{ThNSCLocMin} and many classical results on exact penalty functions well--known in the literature. In
particular, it allows one to use various results on error bounds 
\cite{Pang, WuYe, NgZheng, Zalinescu, Bosch, PenotChapter, BednKruger}, metric regularity and metric
subregularity \cite{Ioffe, Aze, LiMordukhovich, Kruger}, as well as results on subanalyticity of
functions and sets (see, for instance,~\cite{Dedieu, LuoPangRalphWu, LuoPangRalph, LinFukushima}) in order to verify the
exactness of linear penalty functions.
}

\noindent{(ii) One can obtain a similar result on the local exactness of the penalty function $F_{\lambda}$ under
slightly different assumptions. Namely, let, for the sake of simplicity, $A = X$, $Y$ be a normed space, 
$F \colon X \to Y$ be a given mapping, and $\Omega = M = \{ x \in X \mid F(x) = 0 \}$. Then instead of utilizing error
bounds one can directly suppose that in a neighbourhood of a given locally optimal solution $x^*$ the following
inequalities hold true:
$$
  f(x) \ge f(x^*) - \omega(\|F(x)\|), \quad \varphi(x) \ge \eta(\| F(x) \|).
$$
With the use of these inequalities one can easily prove the similar estimate 
$\lambda^*(x^*) \le \limsup_{t \to +0} \omega(t) / \eta(t)$. See \cite{HanMangasarian, HuyerNeumaier, WangMaZhou} for
the applications of this approach to some particular problems.
}

\noindent{(iii) It is worth mentioning that the new class of smooth exact penalty functions \cite{HuyerNeumaier,
WangMaZhou} that attracted a lot of attention of researchers recently, and found applications in various fields of
optimization and optimal control \cite{LiYu, JianLin, LinLoxton}, can be also studied by the methods developed in the
present article. In particular, one can strengthen existing results on the local exactness of penalty functions from
this class with the use of Theorem~\ref{ThUpEstimPenPar}.
}
\end{remark}

Let us show that the theorem above cannot be significantly sharpened. It is clear that if a locally optimal solution 
$x^* \in \Omega$ of the problem $(\mathcal{P})$ is a point of local minimum of the function $f$, then for any
nonnegative function $\varphi$ such that $M = \{ x \in X \mid \varphi(x) = 0 \}$ the penalty function $F_{\lambda}$ is
exact at $x^*$. Therefore, in order to prove that the conditions of Theorem \ref{ThUpEstimPenPar} are nearly optimal one
should study whether the penalty function $F_{\lambda}$ is exact at a given point for some classes of functions $f$ and
$\varphi$.

\begin{proposition}
Let $(X, d)$ be a metric space, $\Omega$ be a closed set, and $x^*$ be a locally optimal solution of the problem
$(\mathcal{P})$. Suppose also that $\eta \colon \mathbb{R}_+ \to \mathbb{R}_+$ is a function
such that $\eta(t) = 0$ iff $t = 0$. If for any nonnegative function $\varphi$ such that
\begin{enumerate}
\item{$M = \{ x \in X \mid \varphi(x) = 0 \}$,}

\item{for some $r > 0$ one has $\varphi(x) \ge \eta(d(x, \Omega))$ for all $x \in B(x^*, r) \cap A$}
\end{enumerate}
the penalty function $F_{\lambda} = f + \lambda \varphi$ is exact at $x^*$, then
\begin{equation} \label{LowEstimF}
  f(y) \ge f(x^*) - L \eta( d(y, \Omega) ) \quad \forall y \in B(x^*, r) \cap (A \setminus M)
\end{equation}
for some $L > 0$ and $r > 0$.
\end{proposition}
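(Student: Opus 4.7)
The strategy is direct rather than contrapositive: I construct an explicit ``minimal'' penalty term $\varphi$ that meets the hypotheses, so that the assumed exactness of the corresponding $F_{\lambda}$ immediately translates into the desired lower estimate for $f$. The natural candidate is the piecewise-defined function
$$
\varphi(x) = \begin{cases} \eta(d(x,\Omega)) & \text{if } x \in X \setminus M,\\ 0 & \text{if } x \in M. \end{cases}
$$

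First I would verify that this $\varphi$ satisfies the three hypotheses of the proposition. Nonnegativity is immediate from $\eta \ge 0$. The zero-set condition $M = \{x \in X \mid \varphi(x) = 0\}$ is the only subtle point: for $x \in M$ one has $\varphi(x)=0$ by construction, while for $x \notin M$ the inclusion $\Omega \subset M$ forces $x \notin \Omega$; here the closedness of $\Omega$ is essential, since it yields $d(x,\Omega) > 0$, whereupon the assumption that $\eta(t) > 0$ for $t > 0$ gives $\varphi(x) > 0$. The required pointwise bound $\varphi(x) \ge \eta(d(x,\Omega))$ on $B(x^*,r)\cap A$ in fact holds globally on $A$ (for any $r$): on $A \setminus M$ it is an equality by definition, and on $A \cap M = \Omega$ both sides vanish.

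Having checked that $\varphi$ is admissible, I would apply the hypothesis to this particular choice: exactness of $F_{\lambda} = f + \lambda \varphi$ at $x^*$ supplies $\lambda^* \ge 0$ and a neighborhood $U$ of $x^*$ with $f(y) + \lambda^* \varphi(y) \ge f(x^*)$ for every $y \in U \cap A$. Since $(X,d)$ is a metric space, I can shrink $U$ to a closed ball $B(x^*, r)$. For any $y \in B(x^*, r) \cap (A \setminus M)$ the definition gives $\varphi(y) = \eta(d(y,\Omega))$, so the exactness inequality becomes
$$
f(y) \ge f(x^*) - \lambda^* \eta\bigl(d(y,\Omega)\bigr),
$$
which is exactly (\ref{LowEstimF}) with $L = \max\{\lambda^*, 1\}$ (the maximum is taken only to guarantee the formal requirement $L > 0$ in case $\lambda^* = 0$).

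The only real difficulty is the clever choice of $\varphi$; once it is written down, the argument is little more than bookkeeping. The proof exploits the fact that the hypothesis is quantified over \emph{all} admissible penalty terms, which gives us the freedom to plug in the smallest one compatible with the prescribed lower bound $\eta\circ d(\cdot,\Omega)$ and thereby transfer the exactness inequality directly into an error-bound-type estimate on $f$.
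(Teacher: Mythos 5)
Your proof is correct and follows essentially the same route as the paper: plug the minimal admissible penalty term built from $\eta(d(\cdot,\Omega))$ into the hypothesis and read inequality~(\ref{LowEstimF}) off the exactness inequality at $x^*$. The only (immaterial) differences are that your piecewise definition of $\varphi$ handles the zero-set requirement $M=\{x\mid\varphi(x)=0\}$ a bit more carefully than the paper's choice $\varphi(\cdot)=\eta(d(\cdot,\Omega))$, and that you invoke the definition of local exactness directly rather than passing through Theorem~\ref{ThNSCLocMin}.
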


\begin{proof}
Set $\varphi(\cdot) = \eta(d(\cdot, \Omega))$. From the fact that the penalty function $F_{\lambda}$ is exact at $x^*$,
and Theorem~\ref{ThNSCLocMin} it follows that for any $L > \max\{ \overline{\lambda}(x^*), 0 \}$ there exists $r > 0$
such that
$$
  \frac{f(x^*) - f(y)}{\varphi(y) - \varphi(x^*)} \le L \quad \forall y \in B(x^*, r) \cap (A \setminus M),
$$
which implies inequality~(\ref{LowEstimF}).  
\end{proof}

\begin{proposition} \label{PrpLowEstimOfPhi}
Let $(X, d)$ be a metric space, $x^* \in \Omega$, and $\omega \colon \mathbb{R}_+ \to \mathbb{R}_+$ be a given
function such that $\omega(0) = 0$. If for any function $f \colon X \to \mathbb{R} \cup \{ +\infty \}$ such that
\begin{enumerate}
\item{$x^*$ is a point of local minimum of $f$ on $\Omega$,}

\item{$f(y) \ge f(x^*) - \omega( d(y, \Omega) )$ for all $y \in B(x^*, r) \cap (A \setminus M)$ for some $r > 0$}
\end{enumerate}
the penalty function $F_{\lambda} = f + \lambda \varphi$ is exact at $x^*$, then
$$
  \varphi(x) \ge a \omega( d(x, \Omega) ) \quad \forall x \in B(x^*, r) \cap A
$$
for some $a > 0$ and $r > 0$.
\end{proposition}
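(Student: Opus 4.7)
The plan is a proof by contradiction. Assuming the conclusion fails, I would exhibit a specific $f$ that satisfies hypotheses 1--2 but makes $F_\lambda$ non-exact at $x^*$, contradicting the blanket assumption. The negation of the conclusion, applied with $a = r = 1/n$, yields for each $n \in \mathbb{N}$ a point $x_n \in B(x^*,1/n) \cap A$ with $\varphi(x_n) < (1/n)\,\omega(d(x_n,\Omega))$. Since $\varphi \ge 0$, the right-hand side must be strictly positive, so $\omega(d(x_n,\Omega)) > 0$, whence $d(x_n,\Omega) > 0$ and, as $x_n \in A$, we get $x_n \in A \setminus M$; in addition $x_n \to x^*$.

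The natural witness is the function that saturates hypothesis 2: I take $f(x) := -\omega(d(x,\Omega))$ on all of $X$. Then $f$ is real-valued with $f(x^*)=0$, and $f \equiv 0$ on $\Omega$, so $x^*$ is a global minimizer of $f$ on $\Omega$ (hypothesis 1), while hypothesis 2 holds globally with equality. Under the standing assumption of the proposition, the penalty function $F_\lambda = f + \lambda\varphi$ is then exact at $x^*$, so there exist $\bar\lambda \ge 0$ and a neighborhood $U$ of $x^*$ with $F_{\bar\lambda}(y) \ge F_{\bar\lambda}(x^*) = 0$ for every $y \in U \cap A$. I then pick $n > \bar\lambda$ large enough that $x_n \in U$ and compute
\begin{equation*}
F_{\bar\lambda}(x_n) = -\omega(d(x_n,\Omega)) + \bar\lambda\,\varphi(x_n) < \omega(d(x_n,\Omega))\Bigl(\frac{\bar\lambda}{n} - 1\Bigr) < 0,
\end{equation*}
which contradicts $F_{\bar\lambda}(x_n) \ge 0$.

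The one place that demands a moment of care is verifying, from the negation, that $\omega(d(x_n,\Omega))>0$ and therefore $x_n \in A\setminus M$; this is forced by combining $\varphi \ge 0$ with the strict inequality supplied by the negation, so it is automatic rather than an actual obstacle. Beyond that observation, the choice of the extremal $f$ and the final arithmetic are routine, and I do not foresee any substantive technical difficulty.
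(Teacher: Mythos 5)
Your proof is correct and rests on the same key idea as the paper's own argument: testing the hypothesis with the extremal witness $f(\cdot) = -\omega(d(\cdot,\Omega))$, which satisfies both conditions with $f \equiv 0$ on $\Omega$. The only difference is cosmetic — the paper then cites the necessity part of Theorem~\ref{ThNSCLocMin} to bound the ratio $\bigl(f(x^*)-f(y)\bigr)/\varphi(y)$ near $x^*$ and reads off $a = 1/L$, whereas you unwind the definition of local exactness directly via a contradiction sequence $x_n \to x^*$ with $\varphi(x_n) < \tfrac{1}{n}\omega(d(x_n,\Omega))$, which amounts to the same computation.
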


\begin{proof}
Define $f(\cdot) = -\omega(d(\cdot, \Omega))$. Then $x^*$ is a point of global minimum of $f$ on $\Omega$ due to the
fact that $f(x) = 0$ for any $x \in \Omega$. Applying the fact that the penalty function $F_{\lambda}$ is exact at
$x^*$, and Theorem~\ref{ThNSCLocMin} one gets that for any $L > \max\{ \overline{\lambda}(x^*), 0 \}$ there 
exists $r > 0$ such that
$$
  \frac{f(x^*) - f(y)}{\varphi(y) - \varphi(x^*)} \le L \quad \forall y \in B(x^*, r) \cap (A \setminus M).
$$
Consequently,
$$
  \varphi(x) \ge \frac{1}{L} \omega( d(x, \Omega) ) \quad \forall x \in B(x^*, r) \cap (A \setminus M).
$$
It remains to define $a = 1 / L$, and note that if $x \in B(x^*, r) \cap (A \cap M)$, then $x \in \Omega$ and
$\varphi(x) = 0 = \omega(0) = \omega( d(x, \Omega) )$.  
\end{proof}

In the propositions above the function $f$ is assumed to satisfy the inequality
$$
  f(y) \ge f(x^*) - \omega( d(y, \Omega) ) \quad \forall y \in B(x^*, r) \cap (A \setminus M)
$$
for a nonnegative function $\omega$. Let us show that this inequality follows from some more widely used conditions,
namely Lipschitz and H\"older continuity.

\begin{proposition} \label{PrpLoweEstimOfF}
Let $(X, d)$ be a metric space, and $x^*$ be a locally optimal solution of the problem $(\mathcal{P})$. Suppose that
there exist $r > 0$, and a continuous from the right function $\omega \colon \mathbb{R}_+ \to \mathbb{R}_+$ such that
$$
  f(x) - f(y) \le \omega( d(x, y) ) \quad \forall x \in B(x^*, r) \cap \Omega \quad
  \forall y \in B(x^*, r) \cap (A \setminus M).
$$
Then there exists $\delta > 0$ such that
$$
  f(y) \ge f(x^*) - \omega( d(y, \Omega) ) \quad \forall y \in B(x^*, \delta) \cap (A \setminus M).
$$
\end{proposition}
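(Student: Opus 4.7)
The idea is to use the local optimality of $x^*$ to estimate $f(y)$ from below by comparing $y$ with an almost-minimizer of $d(y, \Omega)$ that still lies in $\Omega$. Concretely, for any infeasible $y$ near $x^*$, I would pick $x_\varepsilon \in \Omega$ with $d(x_\varepsilon, y)$ arbitrarily close to $d(y, \Omega)$, apply the hypothesis to the pair $(x_\varepsilon, y)$, and pass to the limit $\varepsilon \to 0^+$ using the right continuity of $\omega$.

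First, since $x^*$ is a locally optimal solution of $(\mathcal{P})$, there exists $\delta_0 > 0$ such that $f(x^*) \le f(x)$ for every $x \in B(x^*, \delta_0) \cap \Omega$. Set $\rho = \min\{r, \delta_0\}$ and fix any $\delta \in (0, \rho/2)$. For $y \in B(x^*, \delta) \cap (A \setminus M)$ one has $d(y, \Omega) \le d(y, x^*) \le \delta$, since $x^* \in \Omega$. By the definition of infimum, for every $\varepsilon \in (0, \rho - 2\delta)$ there exists $x_\varepsilon \in \Omega$ with $d(x_\varepsilon, y) < d(y, \Omega) + \varepsilon$; the triangle inequality then yields
\[
  d(x_\varepsilon, x^*) \le d(x_\varepsilon, y) + d(y, x^*) < d(y, \Omega) + \varepsilon + \delta \le 2\delta + \varepsilon < \rho,
\]
so $x_\varepsilon$ lies in $B(x^*, r) \cap B(x^*, \delta_0) \cap \Omega$.

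Now I would combine the two estimates: local optimality gives $f(x^*) \le f(x_\varepsilon)$, and the hypothesis of the proposition (applicable because $x_\varepsilon \in B(x^*, r) \cap \Omega$ and $y \in B(x^*, r) \cap (A \setminus M)$) gives $f(x_\varepsilon) - f(y) \le \omega(d(x_\varepsilon, y))$. Adding these,
\[
  f(x^*) - f(y) \le \omega(d(x_\varepsilon, y)) \quad \text{for all } \varepsilon \in (0, \rho - 2\delta).
\]
Since $d(y, \Omega) \le d(x_\varepsilon, y) < d(y, \Omega) + \varepsilon$, one has $d(x_\varepsilon, y) \to d(y, \Omega)$ from the right as $\varepsilon \to 0^+$, and right continuity of $\omega$ at $d(y, \Omega)$ gives $\omega(d(x_\varepsilon, y)) \to \omega(d(y, \Omega))$. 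Taking the limit delivers $f(y) \ge f(x^*) - \omega(d(y, \Omega))$ on $B(x^*, \delta) \cap (A \setminus M)$.

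The main (mild) obstacle is the final passage to the limit: because $\omega$ is not assumed monotone, one cannot simply replace $\omega(d(x_\varepsilon, y))$ by $\omega(d(y, \Omega) + \varepsilon)$ and take $\varepsilon \to 0$; right continuity is precisely the hypothesis that converts the one-sided approximation $d(x_\varepsilon, y) \downarrow d(y, \Omega)$ into control of $\omega$. Everything else reduces to a routine infimum-approximation plus triangle-inequality bookkeeping to keep $x_\varepsilon$ inside the ball where both the hypothesis and the local optimality are simultaneously available.
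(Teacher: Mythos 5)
Your proof is correct and follows essentially the same route as the paper: approximate $d(y,\Omega)$ by feasible points in a small enough ball so that both the local optimality of $x^*$ and the hypothesis apply, then pass to the limit using the right continuity of $\omega$. The only cosmetic difference is that the paper uses a minimizing sequence $\{x_n\}\subset\Omega$ with decreasing distances, while you use $\varepsilon$-approximations; both handle the limit identically.
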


\begin{proof}
By the definition of locally optimal solution there exists $r_0 > 0$ such that
$$
  f(x) \ge f(x^*) \quad \forall x \in B(x^*, r_0) \cap \Omega = B(x^*, r_0) \cap (M \cap A).
$$
Denote $\delta = \min\{ r, r_0 \} / 2$, and fix an arbitrary $y \in B(x^*, \delta) \cap (A \setminus M)$. By definition
there exists a sequence $\{ x_n \} \subset \Omega$ such that for all $n \in \mathbb{N}$
\begin{equation} \label{DecrConvToDist}
  d(y, x_n) \xrightarrow[n \to \infty]{} d(y, \Omega), \quad d(y, x_n) \le d(y, x^*) \le \frac{r_0}{2}, \quad
  d(y, x_n) \ge d(y, x_{n + 1}).
\end{equation}
Therefore 
$$
  d(x^*, x_n) \le d(x^*, y) + d(y, x_n) \le \frac{r_0}{2} + \frac{r_0}{2} = r_0.
$$
Thus, $\{ x_n \} \subset B(x^*, r_0) \cap \Omega$, which implies that $f(x^*) \le f(x_n)$ for all $n \in \mathbb{N}$.
Hence for any $n \in \mathbb{N}$ one has
$$
  f(x^*) - f(y) = f(x^*) - f(x_n) + f(x_n) - f(y) \le f(x_n) - f(y) \le \omega( d(x_n, y) ).
$$
Taking into account (\ref{DecrConvToDist}), and passing to the limit as $n \to \infty$ one gets the desired result.  
\end{proof}

\begin{corollary}
Let $(X, d)$ be a metric space, and $x^*$ be a locally optimal solution of the problem $(\mathcal{P})$. Suppose that $f$
is H\"older continuous with exponent $\alpha > 0$ in a neighbourhood of $x^*$. Then there exist $C > 0$ and $\delta > 0$
such that
$$
  f(y) \ge f(x^*) - C (d(y, \Omega))^\alpha \quad \forall y \in B(x^*, \delta) \cap (A \setminus M).
$$
\end{corollary}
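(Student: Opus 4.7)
The plan is to apply Proposition~\ref{PrpLoweEstimOfF} directly, with the right choice of modulus $\omega$. By the definition of H\"older continuity with exponent $\alpha>0$, there exist $C>0$ and $r>0$ such that
$$
  |f(x)-f(y)|\le C\, d(x,y)^{\alpha} \quad \forall x,y\in B(x^*,r).
$$
In particular, $f(x)-f(y)\le C\, d(x,y)^{\alpha}$ for all $x\in B(x^*,r)\cap\Omega$ and $y\in B(x^*,r)\cap(A\setminus M)$.

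Next, I would set $\omega(t)=C t^{\alpha}$ for $t\ge 0$. This function is nonnegative, satisfies $\omega(0)=0$, and is continuous on $\mathbb{R}_+$ (hence a fortiori continuous from the right), so all the standing hypotheses of Proposition~\ref{PrpLoweEstimOfF} on $\omega$ are met.

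Applying Proposition~\ref{PrpLoweEstimOfF} with this $\omega$ therefore yields some $\delta>0$ (which can be taken no larger than $r/2$ from the proof of that proposition) such that
$$
  f(y)\ge f(x^*)-\omega(d(y,\Omega))=f(x^*)-C\, d(y,\Omega)^{\alpha}
  \quad\forall y\in B(x^*,\delta)\cap(A\setminus M),
$$
which is exactly the desired conclusion. Essentially no obstacle is expected here; the only mild subtlety is making sure the H\"older inequality is invoked in a neighbourhood small enough that both endpoints of the distance lie in the domain of H\"older continuity, but this is already taken care of inside Proposition~\ref{PrpLoweEstimOfF}, so the corollary is a one-line specialization.
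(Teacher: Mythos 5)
Your argument is correct and is exactly the intended derivation: the paper states this as an immediate corollary of Proposition~\ref{PrpLoweEstimOfF}, obtained by taking $\omega(t) = C t^{\alpha}$, which is continuous (from the right) and satisfies the hypothesis $f(x)-f(y)\le\omega(d(x,y))$ on a ball around $x^*$ by H\"older continuity. No gaps; the specialization is as routine as you describe.
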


As another simple corollary to Theorem~\ref{ThUpEstimPenPar}, and Propositions~\ref{PrpLowEstimOfPhi} and
\ref{PrpLoweEstimOfF} one gets the following result.

\begin{corollary}
Let $(X, d)$ be a metric space, and $x^*$ be a locally optimal solution of the problem $(\mathcal{P})$. Then
the penalty function $F_{\lambda} = f + \lambda \varphi$ is exact at $x^*$ for any function $f$ that is H\"older
continuous with exponent $\alpha \in (0, 1]$ near $x^*$ if and only if there exist $r > 0$ and $a > 0$ such that
$$
  \varphi(x) \ge a (d(x, \Omega))^{\alpha} \quad \forall x \in B(x^*, r) \cap A.
$$
\end{corollary}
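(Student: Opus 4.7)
The plan is to prove both implications, relying on Theorem~\ref{ThUpEstimPenPar} for sufficiency and on a concrete choice of objective in Theorem~\ref{ThNSCLocMin} for necessity.

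For sufficiency, I would assume the lower bound $\varphi(x) \ge a (d(x,\Omega))^{\alpha}$ on $B(x^*,r)\cap A$ and let $f$ be H\"older continuous with exponent $\alpha$ near $x^*$. The preceding corollary (itself a consequence of Proposition~\ref{PrpLoweEstimOfF} with $\omega(t)=Ct^{\alpha}$) supplies $\delta>0$ and $C>0$ such that $f(y)\ge f(x^*)-C(d(y,\Omega))^{\alpha}$ on $B(x^*,\delta)\cap(A\setminus M)$. Shrinking $r$ if needed and invoking Theorem~\ref{ThUpEstimPenPar} with $\omega(t)=Ct^{\alpha}$ and $\eta(t)=at^{\alpha}$, the hypotheses are verified and $\sigma=\limsup_{t\to+0}\omega(t)/\eta(t)=C/a<+\infty$, which yields exactness of $F_{\lambda}$ at $x^*$ (with $\lambda^*(x^*)\le C/a$).

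For necessity, the idea is to feed the universal exactness hypothesis a cleverly chosen H\"older continuous objective, namely $\widetilde f(x):=-(d(x,\Omega))^{\alpha}$. First I would verify that $\widetilde f$ is admissible: since $d(\cdot,\Omega)$ is $1$-Lipschitz and the scalar inequality $|s^{\alpha}-t^{\alpha}|\le |s-t|^{\alpha}$ holds for $s,t\ge 0$ and $\alpha\in(0,1]$, the function $\widetilde f$ is H\"older continuous with exponent $\alpha$ on all of $X$; moreover $\widetilde f\equiv 0$ on $\Omega$, so $x^*$ is (globally) optimal for the modified problem. By hypothesis, $\widetilde F_{\lambda}=\widetilde f+\lambda\varphi$ is exact at $x^*$, so Theorem~\ref{ThNSCLocMin} gives $\overline{\lambda}(x^*,\widetilde f,\varphi)<+\infty$ and, for any $L>\max\{\overline{\lambda}(x^*,\widetilde f,\varphi),0\}$, some $r>0$ with
$$
\frac{\widetilde f(x^*)-\widetilde f(y)}{\varphi(y)-\varphi(x^*)}=\frac{(d(y,\Omega))^{\alpha}}{\varphi(y)}\le L
\quad\forall y\in B(x^*,r)\cap(A\setminus M).
$$
Rearranging gives $\varphi(y)\ge(1/L)(d(y,\Omega))^{\alpha}$ on $B(x^*,r)\cap(A\setminus M)$, and on $B(x^*,r)\cap(A\cap M)\subset\Omega$ the inequality is trivial since then $d(y,\Omega)=0$. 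Setting $a=1/L$ completes the argument.

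The only non-routine step is the admissibility check for $\widetilde f$, which hinges on the elementary subadditivity estimate $|s^{\alpha}-t^{\alpha}|\le|s-t|^{\alpha}$ for $\alpha\in(0,1]$; once this is in hand, the rest of the proof is a mechanical application of Theorems~\ref{ThNSCLocMin} and~\ref{ThUpEstimPenPar}. One could alternatively appeal directly to Proposition~\ref{PrpLowEstimOfPhi}, since its proof uses precisely the objective $-\omega(d(\cdot,\Omega))$ which in our case is H\"older continuous; however, reducing to Theorem~\ref{ThNSCLocMin} keeps the logical dependence transparent.
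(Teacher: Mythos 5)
Your proof is correct and follows essentially the route the paper intends: sufficiency via Proposition~\ref{PrpLoweEstimOfF} (Hölder continuity yields the error bound on $f$) combined with Theorem~\ref{ThUpEstimPenPar} with $\omega(t)=Ct^{\alpha}$, $\eta(t)=at^{\alpha}$, and necessity by running the argument of Proposition~\ref{PrpLowEstimOfPhi} with the test objective $-(d(\cdot,\Omega))^{\alpha}$, whose admissibility (Hölder continuity via $|s^{\alpha}-t^{\alpha}|\le|s-t|^{\alpha}$ and the $1$-Lipschitz distance function) is the only detail the paper leaves implicit.
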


Let us mention some other necessary conditions for a penalty function to be locally exact that can be directly deduced
from Theorem~\ref{ThNSCLocMin}.

\begin{proposition}
Let $(X, d)$ be a metric space, $x^*$ be a locally optimal solution of the problem $(\mathcal{P})$, and $F_{\lambda}$ be
exact at $x^*$. Suppose that there exist $r > 0$, and a function $\omega \colon \mathbb{R}_+ \to \mathbb{R}_+$ such that
$$
  \varphi(x) - \varphi(x^*) \le \omega(d(x, x^*)) \quad \forall x \in B(x^*, r).
$$
Then there exist $\delta > 0$ and $L > 0$ such that
$$
  f(x) - f(x^*) \ge - L \omega(d(x, x^*)) \quad \forall x \in B(x^*, \delta) \cap (A \setminus M).
$$
In particular, in the case $\omega(t) \equiv t$ one has that the Lipschitz continuity of $\varphi$ near $x^*$ implies
the calmness from below of $f$ at $x^*$.
\end{proposition}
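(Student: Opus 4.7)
The plan is to invoke Theorem~\ref{ThNSCLocMin} to convert the exactness hypothesis into a quantitative bound on the difference quotient $(f(x^*)-f(y))/(\varphi(y)-\varphi(x^*))$, and then feed the upper estimate on $\varphi$ into this bound to obtain the desired lower estimate on $f$.

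More precisely, since $F_\lambda$ is exact at $x^*$, Theorem~\ref{ThNSCLocMin} gives $\overline{\lambda}(x^*) < +\infty$. I would fix any $L > \max\{\overline{\lambda}(x^*), 0\}$. By the definition of $\overline{\lambda}(x^*)$ as a $\limsup$, there exists a radius $r_1 > 0$ such that
\[
  \frac{f(x^*) - f(y)}{\varphi(y) - \varphi(x^*)} \le L
  \quad \forall y \in B(x^*, r_1) \cap (A \setminus M).
\]
Using $x^* \in M$, so that $\varphi(x^*) = 0$, and the fact that $\varphi(y) - \varphi(x^*) = \varphi(y) > 0$ on $A \setminus M$, this rearranges to
\[
  f(x^*) - f(y) \le L\bigl(\varphi(y) - \varphi(x^*)\bigr)
  \quad \forall y \in B(x^*, r_1) \cap (A \setminus M).
\]

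Next I would apply the hypothesis $\varphi(x) - \varphi(x^*) \le \omega(d(x, x^*))$ valid on $B(x^*, r)$, setting $\delta = \min\{r, r_1\}$. For any $y \in B(x^*,\delta) \cap (A\setminus M)$, chaining the two inequalities yields
\[
  f(x^*) - f(y) \le L \omega(d(y, x^*)),
\]
which is exactly the claimed lower estimate $f(y) - f(x^*) \ge -L\omega(d(y, x^*))$. For the final sentence, specializing to $\omega(t) \equiv t$ reduces the hypothesis to the Lipschitz property of $\varphi$ near $x^*$, and the conclusion $f(y) - f(x^*) \ge -L d(y, x^*)$ is exactly the calmness from below of $f$ at $x^*$ (extended trivially from $A\setminus M$ to all of $A$ near $x^*$, since on $M$ near $x^*$ we have $f(y) \ge f(x^*)$ by local optimality).

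There is essentially no hard step here: the argument is a direct unpacking of Theorem~\ref{ThNSCLocMin}. The only mild care is to remember that $\varphi(x^*) = 0$ (so the upper bound on $\varphi(y)-\varphi(x^*)$ coincides with an upper bound on $\varphi(y)$ itself), and to shrink the neighborhood to the intersection of the two relevant radii.
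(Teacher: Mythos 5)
Your proof is correct and follows exactly the route the paper intends: the proposition is stated as a direct consequence of Theorem~\ref{ThNSCLocMin}, and your argument (finiteness of $\overline{\lambda}(x^*)$, a ball on which the difference quotient is bounded by $L$, then chaining with the upper estimate $\varphi(y)\le\omega(d(y,x^*))$ on the smaller radius) is the same deduction used in the paper's proofs of the neighbouring propositions. The degenerate case where $x^*$ is not a limit point of $A\setminus M$ is vacuous, so nothing is missing.
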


\begin{proposition}
Let $(X, d)$ be a metric space, and $x^*$ be a locally optimal solution of the problem $(\mathcal{P})$. Suppose that
there exist $r > 0$, and a function $\omega \colon \mathbb{R}_+ \to \mathbb{R}_+$ such that
$$
  f(x) \le f(x^*) - \omega(d(x, x^*)) \quad \forall x \in B(x^*, r) \cap (A \setminus M).
$$
Then for $F_{\lambda}$ to be exact at $x^*$ it is necessary that there exist $\delta > 0$ and $a > 0$ such that
$$
  \varphi(x) - \varphi(x^*) \ge a \omega(d(x, x^*)) \quad \forall x \in B(x^*, \delta) \cap (A \setminus M).
$$
\end{proposition}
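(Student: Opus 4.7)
The plan is to apply Theorem~\ref{ThNSCLocMin} in its contrapositive/quantitative form: local exactness of $F_\lambda$ at $x^*$ gives $\overline{\lambda}(x^*) < +\infty$, and then the definition of $\overline{\lambda}(x^*)$ as a limsup immediately translates into a uniform inequality on a suitable neighbourhood. Rearranging that inequality and inserting the hypothesized upper bound on $f$ should deliver the required lower bound on $\varphi$.

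More concretely, suppose $F_\lambda$ is exact at $x^*$. By Theorem~\ref{ThNSCLocMin}, the quantity $\overline{\lambda}(x^*)$ is finite and equals $\lambda^*(x^*)$ when positive. Fix any constant $L > \max\{\overline{\lambda}(x^*), 0\}$. Then by the very definition of the limit superior, there exists $\delta_1 > 0$ such that
$$
  \frac{f(x^*) - f(y)}{\varphi(y) - \varphi(x^*)} \le L
  \qquad \forall y \in B(x^*, \delta_1) \cap (A \setminus M).
$$
Since $x^* \in M$ gives $\varphi(x^*) = 0$ and since $y \notin M$ gives $\varphi(y) > 0$, the denominator is positive and the inequality may be rearranged to $\varphi(y) \ge (f(x^*) - f(y))/L$ on this set.

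Now set $\delta = \min\{\delta_1, r\}$ and take any $y \in B(x^*, \delta) \cap (A \setminus M)$. The hypothesis $f(y) \le f(x^*) - \omega(d(y, x^*))$ yields $f(x^*) - f(y) \ge \omega(d(y, x^*))$, so combining with the previous step gives
$$
  \varphi(y) - \varphi(x^*) \;=\; \varphi(y) \;\ge\; \frac{1}{L}\bigl(f(x^*) - f(y)\bigr) \;\ge\; \frac{1}{L}\omega(d(y, x^*)).
$$
Setting $a = 1/L > 0$ completes the argument.

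I do not expect any genuine obstacle here: the proof is a direct unwinding of Theorem~\ref{ThNSCLocMin}, and this proposition is essentially dual to Proposition~\ref{PrpLowEstimOfPhi}, differing only in that the distance to $x^*$ replaces the distance to $\Omega$. The only point requiring a small amount of care is ensuring $L$ can be chosen strictly positive (hence the $\max$ with $0$) so that the constant $a = 1/L$ is well-defined, and ensuring $\delta \le r$ so that the hypothesis on $f$ actually applies on $B(x^*, \delta) \cap (A \setminus M)$.
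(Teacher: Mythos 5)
Your proof is correct and follows exactly the route the paper intends: the paper states this proposition without proof as something ``directly deduced from Theorem~\ref{ThNSCLocMin}'', and your argument (finiteness of $\overline{\lambda}(x^*)$, choice of $L > \max\{\overline{\lambda}(x^*),0\}$, rearrangement using $\varphi(x^*)=0$ and $\varphi(y)>0$ for $y\in A\setminus M$, then inserting the hypothesis on $f$ and taking $a=1/L$, $\delta=\min\{\delta_1,r\}$) is precisely that deduction, mirroring the paper's proof of Proposition~\ref{PrpLowEstimOfPhi}.
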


In the end of this subsection, we consider one simple example that illustrates the main results discussed above.

\begin{example}
Let $X = A = \mathbb{R}$, and $M = \Omega = (- \infty, 0]$. Define
$$
  f(x) = \begin{cases}
    -x, \text{ if } x \le 0, \\
    -(x + 1)^2 + 1, \text{ if } x > 0,
  \end{cases}
$$
and set $\varphi(x) = \max\{ 0, x \}$. Obviously, $x^* = 0$ is a point of global minimum of $f$ on $\Omega$. Note also
that both function $f$ and $\varphi$ are locally Lipschitz continuous.

It is clear that $\varphi(x) = d(x, \Omega)$, and that for any $r > 0$ the function $f$ is Lipschitz continuous on 
$(-r, r)$ with a Lipschitz constant $L = 2(r + 1)$. Therefore by Proposition~\ref{PrpLoweEstimOfF} and
Theorem~\ref{ThUpEstimPenPar} the penalty function $F_{\lambda}$ is exact at the point $x^* = 0$, and
$\overline{\lambda}(x^*) \le 2(r + 1)$ for any $r > 0$, which implies $\overline{\lambda}(x^*) \le 2$. A direct
computation shows that $\overline{\lambda}(x^*) = 2$. Hence the least exact penalty parameter $\lambda^*(x^*)$ at the
point $x^* = 0$ equals $2$.

Note that $F_2(x) = -x^2$ for any $x \ge 0$. Therefore $x^* = 0$ is not a point local minimum of $F_{\lambda}$ with 
$\lambda = 2$. 
\end{example}

\begin{remark} \label{RmrkLocalExPenPar}
The example above demonstrates that, in the general case, a locally optimal solution of the problem $(\mathcal{P})$ is
not necessarily a point of local minimum of the penalty function $F_{\lambda}$ on the set $A$ when 
$\lambda = \lambda^*(x^*)$.
\end{remark}

\subsection{Problem Calmness and Local Exact Penalization}
\label{SectProbCalmness}

A different approach to the study of local exactness is based on an analysis of an optimization problem
behaviour under perturbations of constraints. This approach allows one to avoid any direct usage of error bounds
and Lipschitz-like behaviour of the objective function and, thus, can be applied to a broader class of
optimization problems than the technique discussed in the previous subsection. 

The approach discussed in this subsection was originally proposed by Rockafellar and Clarke \cite{Clarke}, and
later on was developed by many different authors \cite{ClarkeBook, BurkeCalm, RalphYang, HuangTeoYang, ZhaiHuang} (see
also \cite{Wen, Henrion, Penot}). Our exposition of the subject is a straightforward generalization of the ideas
developed in \cite{UderzoCalm}, where the optimization problem calmness under nonlinear perturbations was studied. Note
that the results of this subsection also generalize the concept of lower order calmness that was studied in
\cite{LowerOrderCalmness}.

Consider the perturbed family of constrained optimization problems
$$
  \min f(x) \quad \text{subject to} \quad x \in M(p), \quad x \in A, \eqno (\mathcal{P}_p)
$$
where $M \colon P \rightrightarrows X$ is a given set--valued mapping, $(P, d)$ is a metric space of perturbation
parameters, and $M(p^*) = M$ for some $p^* \in P$. Thus, the problem ($\mathcal{P}_p$) coincides with the problem
$(\mathcal{P})$ when $p = p^*$. Denote by $\Omega(p) = M(p) \cap A$ the set of feasible points
of the problem ($\mathcal{P}_p$). Recall that $\Omega^{-1} \colon X \to P$ with
$\Omega^{-1} (x) = \{ p \in P \mid x \in \Omega(p) \}$ for any $x \in X$ is the inverse multifunction to $\Omega$.

\begin{definition}
Let $x^*$ be a locally optimal solution of the problem $(\mathcal{P})$, and 
$\omega \colon \mathbb{R}_+ \to \mathbb{R}_+$ be a given function. The problem $(\mathcal{P}_{p^*})$ is called 
$\omega$-\textit{calm} at $x^*$ if there exist $r > 0$, $a > 0$, and a neighbourhood $U$ of $x^*$ such that
$$
  f(x) \ge f(x^*) - a \omega(d(p, p^*)) \quad \forall x \in U \cap \Omega(p) \quad \forall p \in B(p^*, r).
$$
\end{definition}

Utilizing the notion of $\omega$-calmness we can give a sufficient condition for the penalty function $F_{\lambda}$
to be exact at a locally optimal solution of the problem $(\mathcal{P})$. We use general Theorem~\ref{ThNSCLocMin} in
order to obtain this result.

\begin{theorem} \label{ThCalmLocalPen}
Let $X$ be a topological space, $x^*$ be a locally optimal solution of the problem $(\mathcal{P})$, 
$\omega, \eta \colon \mathbb{R}_+ \to \mathbb{R}_+$ be given functions such that
$\eta(t) = 0$ iff $t = 0$, and $\eta$ is strictly increasing and continuous from the right. Suppose that
\begin{enumerate}
\item{$f$ is lower semicontinuous (l.s.c.) at $x^*$;}

\item{the problem $(\mathcal{P}_{p^*})$ is $\omega$-calm at $x^*$;}

\item{$\varphi(x) \ge \eta( d(p^*, \Omega^{-1}(x)) )$ for any $x \in V$,
where $V$ is a neighbourhood of $x^*$;}

\item{$\limsup_{t \to +0} \frac{\omega(t)}{\eta(t)} =: \sigma < +\infty$.}
\end{enumerate}
Then the penalty function $F_{\lambda}$ is exact at $x^*$ and $\lambda^*(x^*) \le a \sigma$, where $a > 0$ is
from the definition of $\omega$-calmness.
\end{theorem}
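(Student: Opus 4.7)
The plan is to verify the criterion of Theorem~\ref{ThNSCLocMin}: if I can show $\overline{\lambda}(x^*) \le a\sigma$, then, since $a, \sigma \ge 0$, both exactness at $x^*$ and the bound $\lambda^*(x^*) = \max\{\overline{\lambda}(x^*), 0\} \le a\sigma$ follow at once. So I fix $\varepsilon > 0$ and aim to bound the ratio $(f(x^*) - f(y))/\varphi(y)$ by $\max\{a(\sigma + \varepsilon), \varepsilon\}$ for $y$ in some neighbourhood $W$ of $x^*$ intersected with $A \setminus M$; since this maximum tends to $a\sigma$ as $\varepsilon \to 0^+$, the conclusion follows.

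Using the $\limsup$ hypothesis, pick $\delta \in (0, r]$ so small that $\omega(t) \le (\sigma + \varepsilon)\eta(t)$ on $(0, \delta]$, and, by l.s.c.\ of $f$ at $x^*$, choose a neighbourhood $W \subseteq U \cap V$ of $x^*$ on which $f(y) > f(x^*) - \varepsilon\,\eta(\delta)$. For $y \in W \cap (A \setminus M)$ set $t_y := d(p^*, \Omega^{-1}(y))$ and split on whether $t_y$ is small or large. If $t_y \ge \delta$, hypothesis~3 and the monotonicity of $\eta$ give $\varphi(y) \ge \eta(\delta)$, so the l.s.c.\ estimate immediately yields $(f(x^*) - f(y))/\varphi(y) < \varepsilon$. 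If $t_y < \delta$, then for each $\varepsilon_1 \in (0, \delta - t_y)$ I select $p_{\varepsilon_1} \in \Omega^{-1}(y)$ with $0 < d(p_{\varepsilon_1}, p^*) < t_y + \varepsilon_1$; strict positivity holds because $p_{\varepsilon_1} = p^*$ would force $y \in \Omega(p^*) = M \cap A$, contradicting $y \notin M$. Chaining $\omega$-calmness, the $\limsup$ inequality on $(0, \delta]$, and the monotonicity of $\eta$ produces
$$
f(x^*) - f(y) \le a\,\omega(d(p_{\varepsilon_1}, p^*)) \le a(\sigma + \varepsilon)\,\eta(d(p_{\varepsilon_1}, p^*)) \le a(\sigma + \varepsilon)\,\eta(t_y + \varepsilon_1),
$$
and sending $\varepsilon_1 \to 0^+$ with right continuity of $\eta$ gives $f(x^*) - f(y) \le a(\sigma + \varepsilon)\,\eta(t_y) \le a(\sigma + \varepsilon)\varphi(y)$, as required.

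The main obstacle I foresee is the ``large $t_y$'' regime: $\omega$-calmness is informative only when one can produce $p \in \Omega^{-1}(y)$ with $d(p, p^*)$ inside the range where the $\limsup$ inequality is valid, which breaks down as soon as $t_y \ge \delta$; moreover $\omega$ is not assumed monotone, so a direct infimum over $p \in \Omega^{-1}(y) \cap B(p^*, r)$ of $\omega(d(p, p^*))$ cannot in general be matched to $\eta(t_y)$. The remedy is to let the single threshold $\delta$ do double duty: $\eta(\delta)$ serves as a uniform lower bound on $\varphi(y)$ in the large-$t_y$ regime and at the same time calibrates the l.s.c.\ slack to $\varepsilon\,\eta(\delta)$, while the right continuity of $\eta$ at $t_y$ lets the approximating sequence $p_{\varepsilon_1}$ be read off as an estimate involving $\eta(t_y)$, hence $\varphi(y)$, via hypothesis~3.
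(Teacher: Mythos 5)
Your proposal is correct and follows essentially the same route as the paper: reduce everything to showing $\overline{\lambda}(x^*)\le a\sigma$ and invoke Theorem~\ref{ThNSCLocMin}, split into a ``far'' regime handled by lower semicontinuity of $f$ together with the lower bound $\varphi\ge\eta(\delta)$ and a ``near'' regime handled by selecting $p\in\Omega^{-1}(y)$ with $d(p,p^*)$ slightly above $d(p^*,\Omega^{-1}(y))$ and chaining $\omega$-calmness with the $\limsup$ bound and the right continuity of $\eta$. The only (cosmetic) differences are that you split on $d(p^*,\Omega^{-1}(y))$ rather than on the size of $\varphi(y)$, work directly with a neighbourhood instead of nets, and thereby obtain the slightly cleaner bound $a(\sigma+\varepsilon)$ in place of the paper's $a(1+\varepsilon)(\sigma+\varepsilon)$, which yields the same limit $a\sigma$.
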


\begin{proof}
Let us verify that $\overline{\lambda}(x^*) \le a \sigma$. Then applying Theorem~\ref{ThNSCLocMin} one gets the desired
result. Let us also note that one can obviously suppose that $x^*$ is a limit point of the set $A \setminus M$.

Fix an arbitrary $\varepsilon > 0$, and choose a net $\{ x_{\gamma} \} \subset A \setminus M$, $\gamma \in \Gamma$,
converging to $x^*$. By the definition of $\omega$-calmness there exist $r > 0$, $a > 0$, and a neighbourhood $U$ of
$x^*$ such that
$$
  f(x) \ge f(x^*) - a \omega(d(p, p^*)) \quad \forall x \in U \cap \Omega(p) \quad \forall p \in B(p^*, r),
$$
while by the definition of limit superior there exists $t_0 > 0$ such that $\omega(t) / \eta(t) < \sigma + \varepsilon$
for any $t \in (0, t_0)$. Define the set
$$
  \Gamma_1 = \big\{ \gamma \in \Gamma \mid \varphi(x_{\gamma}) \ge \min\{ \eta(r), \eta(t_0) \} \big\},
$$
and denote $\Gamma_2 = \Gamma \setminus \Gamma_1$.

The function $f$ is l.s.c. at $x^*$. Therefore there exists $\gamma_1 \in \Gamma$ such that  
$$
  f(x_{\gamma}) \ge f(x^*) - \varepsilon \min\{ \eta(r), \eta(t_0) \} \quad \forall \gamma \ge \gamma_1.
$$
Hence for any $\gamma \in \Gamma_1$ such that $\gamma \ge \gamma_1$ (note that there might be no such $\gamma$) one has
\begin{equation} \label{LSCofFandDefOfGamma}
  \frac{f(x^*) - f(x_{\gamma})}{\varphi(x_{\gamma}) - \varphi(x^*)} \le 
  \frac{\varepsilon \min\{ \eta(r), \eta(t_0) \}}{\min\{ \eta(r), \eta(t_0) \}} = \varepsilon.
\end{equation}

From the fact that the net $\{ x_{\gamma} \}$, $\gamma \in \Gamma$, converges to $x^*$ it follows that there exists
$\gamma_2 \in \Gamma$ such that $x_{\gamma} \in U \cap V$ for any $\gamma \ge \gamma_2$, where $U$ is from the
definition of $\omega$-calmness, and $V$ is from the third condition of the theorem. Hence for any $\gamma \in \Gamma_2$
such that $\gamma \ge \gamma_2$ one has
\begin{equation} \label{ParamErrorBoundPenaltyTerm}
  \eta\big( d(p^*, \Omega^{-1}(x_{\gamma})) \big) \le \varphi(x_{\gamma}) < \min\{ \eta(r), \eta(t_0) \},
\end{equation}
which implies $d(p^*, \Omega^{-1}(x_{\gamma})) < \min\{ r, t_0 \}$, since $\eta$ is strictly increasing. 

Fix an arbitrary $\gamma \in \Gamma_2$ such that $\gamma \ge \gamma_2$, and denote 
$\tau = d(p^*, \Omega^{-1}(x_{\gamma}))$. Observe that $\varphi(x_{\gamma}) > 0$ and 
$p^* \notin \Omega^{-1}(x_{\gamma})$ for any $\gamma \in \Gamma$ due to the fact that $x_{\gamma} \in A \setminus M$ by
definition. Applying the continuity from the right of the function $\eta$ one obtains that 
there exists $\Delta \tau > 0$ such that
$$
  \eta(\tau) \ge \eta(t) - \varepsilon \varphi(x_{\gamma}) \quad \forall t \in [\tau, \tau + \Delta \tau).
$$
Hence and from (\ref{ParamErrorBoundPenaltyTerm}) it follows that
$$
  \eta(t) \le (1 + \varepsilon) \varphi(x_{\gamma}) \quad \forall t \in [\tau, \tau + \Delta \tau).
$$
Recall that $\tau = d(p^*, \Omega^{-1}(x_{\gamma})) < \min\{ r, t_0 \}$. Therefore there exists
$p_{ \gamma } \in \Omega^{-1}(x_{\gamma})$ such that 
$\tau \le d(p^*, p_{\gamma}) < \min \{ r, t_0, \tau + \Delta \tau \}$, which yields
\begin{equation} \label{ChoiceOfpGamma}
  \eta( d(p^*, p_{\gamma}) ) \le (1 + \varepsilon) \varphi(x_{\gamma}), \quad 
  d(p^*, p_{\gamma}) < \min\{ r, t_0 \}
\end{equation}
for any $\gamma \in \Gamma_2$ such that $\gamma \ge \gamma_2$. As it was noted above, 
$p^* \notin \Omega^{-1}(x_{\gamma})$. Consequently, $d(p^*, p_{\gamma}) > 0$ due to the fact that 
$p_{\gamma} \in \Omega^{-1}(x_{\gamma})$. Therefore applying (\ref{ChoiceOfpGamma}) and $\omega$-calmness of the
problem $(\mathcal{P}_{p^*})$ one gets that for any $\gamma \in \Gamma_2$ such that $\gamma \ge \gamma_2$
\begin{equation} \label{UpperEstViaPertParam}
  \frac{f(x^*) - f(x_{\gamma})}{\varphi(x_{\gamma}) - \varphi(x^*)} \le
  \frac{a(1 + \varepsilon) \omega(d(p^*, p_{\gamma}))}{\eta( d(p^*, p_{\gamma}) )} \le
  a (1 + \varepsilon) (\sigma + \varepsilon)
\end{equation}
by the fact that $d(p^*, p_{\gamma}) < \min\{ r, t_0 \}$ and the choice of $t_0$.

By the definition of net there exists $\gamma_0 \in \Gamma$ such that $\gamma_0 \ge \gamma_1$ and 
$\gamma_0 \ge \gamma_2$. Taking into account (\ref{LSCofFandDefOfGamma}) and
(\ref{UpperEstViaPertParam}) one gets that for any $\gamma \in \Gamma$ such that $\gamma \ge \gamma_0$
$$
  \frac{f(x^*) - f(x_{\gamma})}{\varphi(x_{\gamma}) - \varphi(x^*)} \le
  \max\big\{ \varepsilon, a (1 + \varepsilon) (\sigma + \varepsilon) \big\},
$$
which implies
$$
  \limsup_{\gamma \in \Gamma} \frac{f(x^*) - f(x_{\gamma})}{\varphi(x_{\gamma}) - \varphi(x^*)} \le a \sigma.
$$
Hence $\overline{\lambda}(x^*) \le a\sigma$, since the net $\{ x_{\gamma} \} \subset A \setminus M$, 
$\gamma \in \Gamma$, was chosen arbitrarily.  
\end{proof}

Let us show that Theorem~\ref{ThCalmLocalPen} is, in essence, a parametric counterpart of Theorem~\ref{ThUpEstimPenPar}
in which the direct estimates of the form
$$
  \varphi(x) \ge \eta(d(x, \Omega)), \quad f(x) \ge f(x^*) - \omega(d(x, \Omega))
$$
are replaced by the indirect estimates (i.e. the estimates obtained via perturbation)
$$
  \varphi(x) \ge \eta(d(p^*, \Omega^{-1}(x)), \quad f(x) \ge f(x^*) - \omega(d(p^*, \Omega^{-1}(x)),
$$
that might exist in a more general case. The theorem below provides an equivalent formulation of the $\omega$-calmness
of a perturbed optimization problem, and contains some existing results (see~\cite{BurkeCalm, LowerOrderCalmness}) as
simple particular cases.

\begin{theorem} 
Let $x^*$ be a locally optimal solution of the problem $(\mathcal{P})$, and 
$\omega \colon \mathbb{R}_+ \to \mathbb{R}_+$ be a strictly increasing and continuous from the right function such that
$\omega(0) = 0$. Suppose also that $f$ is l.s.c. at $x^*$. Then the problem $(\mathcal{P}_{p^*})$ is $\omega$-calm at
$x^*$ if and only if there exists a neighbourhood $U$ of $x^*$ such that
\begin{equation} \label{IndirectLowerEstim_ObjFunc}
  f(x) \ge f(x^*) - a \omega(d(p^*, \Omega^{-1}(x))) \quad \forall x \in U,
\end{equation}
where $a > 0$ is from the definition of the $\omega$-calmness of the problem $(\mathcal{P}_{p^*})$ at $x^*$.
\end{theorem}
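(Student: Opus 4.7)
The plan is to prove the two implications separately; the converse direction ($\omega$-calmness $\Rightarrow$ (\ref{IndirectLowerEstim_ObjFunc})) will be the main work. For the easy direction, assume (\ref{IndirectLowerEstim_ObjFunc}) holds on a neighbourhood $U$. For any $x \in U \cap \Omega(p)$ with $p \in B(p^*, r)$ one has $p \in \Omega^{-1}(x)$, so $d(p^*, \Omega^{-1}(x)) \le d(p^*, p)$; monotonicity of $\omega$ then turns (\ref{IndirectLowerEstim_ObjFunc}) directly into the $\omega$-calmness inequality with the same constant $a$ (and the same neighbourhood $U$, for any choice of $r > 0$).

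For the forward direction, let $r$, $a$, and the neighbourhood $U$ come from the definition of $\omega$-calmness at $x^*$. I would construct a smaller neighbourhood $U' \subset U$ of $x^*$ on which (\ref{IndirectLowerEstim_ObjFunc}) holds, splitting the analysis according to the size of $\tau(x) := d(p^*, \Omega^{-1}(x))$. In the regime $\tau(x) < r$, for each $\varepsilon > 0$ one picks $p_\varepsilon \in \Omega^{-1}(x)$ with $d(p^*, p_\varepsilon) < \min\{r, \tau(x) + \varepsilon\}$; since $x \in U \cap \Omega(p_\varepsilon)$, $\omega$-calmness yields $f(x) \ge f(x^*) - a\omega(d(p^*, p_\varepsilon))$, and letting $\varepsilon \to 0^+$ together with the right continuity of $\omega$ at $\tau(x)$ produces (\ref{IndirectLowerEstim_ObjFunc}) for such $x$. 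In the complementary regime $\tau(x) \ge r$, strict monotonicity gives $\omega(\tau(x)) \ge \omega(r) > 0$, so it suffices to arrange $f(x) \ge f(x^*) - a\omega(r)$; this is where the lower semicontinuity of $f$ at $x^*$ is invoked to shrink $U'$ enough that the slack is absorbed. The edge case $\Omega^{-1}(x) = \emptyset$ is handled by the convention $\tau(x) = +\infty$, under which the right-hand side of (\ref{IndirectLowerEstim_ObjFunc}) is $-\infty$ and the inequality is vacuous.

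The main technical obstacle is reconciling the two regimes, which call for rather different tools. Right continuity of $\omega$ is exactly what converts a near-infimum choice of $p_\varepsilon$ into an estimate written purely in terms of $\tau(x)$; strict monotonicity provides the positive slack $\omega(r) > 0$ in the large-$\tau$ regime; and the l.s.c.\ hypothesis on $f$ is what then fills that slack uniformly on a small enough $U'$. Each of the three regularity assumptions plays an essential role in glueing the two estimates together into a single bound valid on one neighbourhood of $x^*$.
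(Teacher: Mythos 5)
Your proposal is correct; the easy direction is the same as the paper's, but for the main implication you take a direct route where the paper argues by contradiction. The paper picks a sequence $x_n \in U_n \cap U$ violating (\ref{IndirectLowerEstim_ObjFunc}), uses the lower semicontinuity of $f$ (through shrinking neighbourhoods $U_n$ with $f \ge f(x^*) - \varepsilon_n$ on $U_n$) to force $\omega(d(p^*, \Omega^{-1}(x_n))) < \varepsilon_n / a$ and hence $d(p^*, \Omega^{-1}(x_n)) \to 0$, and then invokes the right continuity of $\omega$ to select $p_n \in \Omega^{-1}(x_n)$ with $d(p^*, p_n) < r$ still violating the calmness inequality at $x_n \in U$. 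Your version dispenses with the contradiction and the sequence: in the regime $d(p^*, \Omega^{-1}(x)) < r$ the calmness inequality applied to a near-infimal $p_\varepsilon$, together with monotonicity and right continuity of $\omega$, yields (\ref{IndirectLowerEstim_ObjFunc}) on all of $U$ with no shrinking, while the regime $d(p^*, \Omega^{-1}(x)) \ge r$ is absorbed by using l.s.c.\ of $f$ to shrink the neighbourhood so that $f \ge f(x^*) - a\omega(r)$ there, strict monotonicity entering only through $\omega(r) > 0$. The three hypotheses thus play parallel roles in both arguments (right continuity to replace the infimum over $\Omega^{-1}(x)$ by an actual parameter value, strict monotonicity to provide positive slack, l.s.c.\ to control $f$ near $x^*$), but your case split makes the quantifier structure more transparent and keeps the same constant $a$ explicitly, at the cost of a slightly longer write-up; the paper's reductio is more compact. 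Your explicit treatment of the degenerate case $\Omega^{-1}(x) = \emptyset$ via the convention $d(p^*, \emptyset) = +\infty$ is a detail the paper passes over in silence.
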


\begin{proof}
Let there exist a neighbourhood $U$ of $x^*$ such (\ref{IndirectLowerEstim_ObjFunc}) holds true. Then for any 
$p \in P$ and $x \in U \cap \Omega(p)$ one has
$$
  f(x) \ge f(x^*) - a \omega(d(p^*, \Omega^{-1}(x))) \ge f(x^*) - a \omega(d(p^*, p))
$$
due to the facts that the function $\omega$ is strictly increasing, and $x \in \Omega(p)$ iff $p \in \Omega^{-1}(x)$.
Thus, the problem $(\mathcal{P}_{p^*})$ is $\omega$-calm at $x^*$ with $r = +\infty$.

Suppose now that the problem $(\mathcal{P}_{p^*})$ is $\omega$-calm at $x^*$. Then there exist $r > 0$, $a > 0$, and a
neighbourhood $U$ of $x^*$ such that 
\begin{equation} \label{OmegaCalmOfPertProb}
  f(x) \ge f(x^*) - a \omega(d(p, p^*)) \quad \forall x \in U \cap \Omega(p) \quad \forall p \in B(p^*, r).
\end{equation}
Choose a decreasing sequence $\{ \varepsilon_n \}$ such that $\varepsilon_n > 0$ for any $n \in \mathbb{N}$, and
$\varepsilon_n \to 0$ as $n \to \infty$. Applying the lower semicontinuity of the function $f$ at $x^*$ one obtains
that for any $n \in \mathbb{N}$ there exists a neighbourhood $U_n$ of $x^*$ such that
$f(x) \ge f(x^*) - \varepsilon_n$ for all $x \in U_n$.

Arguing by reductio ad absurdum, suppose that (\ref{IndirectLowerEstim_ObjFunc}) does not hold true. Then, in
particular, for any $n \in \mathbb{N}$ there exists $x_n \in U_n \cap U$ such that
\begin{equation} \label{AbsordoParamEstim}
  f(x_n) < f(x^*) - a \omega( d(p^*, \Omega^{-1}(x_n) )
\end{equation}
Note that for all $n \in \mathbb{N}$ one has $f(x_n) \ge f(x^*) - \varepsilon_n$ by the definition of $U_n$. Therefore
$\omega( d(p^*, \Omega^{-1}(x_n) ) < \varepsilon_n / a$ for any $n \in \mathbb{N}$, which implies that 
$d(p^*, \Omega^{-1}(x_n)) \to 0$ as $n \to \infty$ due to the fact that the function $\omega$ is strictly increasing,
and $\omega(0) = 0$. Hence for any sufficiently large $n$ one has 
$d(p^*, \Omega^{-1}(x_n)) < r / 2$. Consequently, applying (\ref{AbsordoParamEstim}) and the continuity from the right
of the function $\omega$ one gets that for any $n$ large enough there exists $p_n \in \Omega^{-1}(x_n)$ such that 
$d(p^*, \Omega^{-1}(x_n)) \le d(p^*, p_n) < r$ and
$$
  f(x_n) < f(x^*) - a \omega( d(p^*, p_n) ),
$$
which contradicts (\ref{OmegaCalmOfPertProb}) by virtue of the facts that $x_n \in U$ by construction, and 
$p_n \in \Omega^{-1}(x_n)$ iff $x_n \in \Omega(p_n)$.
\end{proof}

Imposing different assumptions on the penalty term $\varphi$ than in Theorem~\ref{ThCalmLocalPen} one can prove that the
$\omega$-calmness of the problem $(\mathcal{P}_{p^*})$ is also necessary for the penalty function to be exact at a
locally optimal solution of this problem.

\begin{proposition}
Let $X$ be a topological space, and $x^*$ be a locally optimal solution of the problem $(\mathcal{P})$. Suppose that
there exist $r > 0$, a neighbourhood $U$ of $x^*$, and a function $\eta \colon \mathbb{R}_+ \to \mathbb{R}_+$ such that
$$
  \varphi(x) \le \eta(d(p, p^*)) \quad \forall x \in U \cap \Omega(p) \quad 
  \forall p \in B(p^*, r).
$$
If the penalty function $F_{\lambda}$ is exact at $x^*$, then the problem $(\mathcal{P}_{p^*})$ is $\omega$-calm at
$x^*$ for any function $\omega$ such that $\omega(\cdot) \ge \eta(\cdot)$.
\end{proposition}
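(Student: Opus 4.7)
The plan is to use the definition of local exactness directly to turn a bound on $\varphi$ in terms of the perturbation distance into a bound on $f - f(x^*)$ in terms of the same quantity.

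First I would fix any $\lambda_0 > \lambda^*(x^*)$. Since $F_{\lambda}$ is exact at $x^*$, by the definition of the least exact penalty parameter there exists a neighbourhood $W$ of $x^*$ such that
$$
  f(x) + \lambda_0 \varphi(x) = F_{\lambda_0}(x) \ge F_{\lambda_0}(x^*) = f(x^*) \quad \forall x \in W \cap A,
$$
where I have used $\varphi(x^*) = 0$ because $x^* \in \Omega \subset M$. Rearranging, this gives the pointwise bound
$$
  f(x) \ge f(x^*) - \lambda_0 \varphi(x) \quad \forall x \in W \cap A.
$$

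Next I would combine this bound with the hypothesis. Shrink the neighbourhood to $U_0 := U \cap W$, which is still a neighbourhood of $x^*$. For any $p \in B(p^*, r)$ and any $x \in U_0 \cap \Omega(p)$, we have $x \in U$, so the assumption gives $\varphi(x) \le \eta(d(p, p^*))$; and since $\Omega(p) = M(p) \cap A \subset A$, we also have $x \in W \cap A$, so the inequality above applies. Using $\omega \ge \eta$ pointwise, it follows that
$$
  f(x) \ge f(x^*) - \lambda_0 \varphi(x) \ge f(x^*) - \lambda_0 \eta(d(p, p^*)) \ge f(x^*) - \lambda_0 \omega(d(p, p^*)).
$$
Setting $a := \lambda_0$ yields precisely the $\omega$-calmness condition at $x^*$ with the same radius $r$ and neighbourhood $U_0$, so the proof is complete.

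There is no substantive obstacle here; the only care needed is the bookkeeping on neighbourhoods (taking the intersection of the neighbourhood coming from exactness with the neighbourhood $U$ supplied by the hypothesis) and the observation that $\Omega(p) \subset A$, which is what lets the exactness inequality be applied at points that are feasible for the perturbed problem rather than only for the nominal one.
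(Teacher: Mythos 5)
Your proposal is correct and follows essentially the same argument as the paper: fix $\lambda_0 > \lambda^*(x^*)$, use local exactness to obtain $f(x) \ge f(x^*) - \lambda_0 \varphi(x)$ on a neighbourhood intersected with $A$, then intersect with $U$ and apply the hypothesis $\varphi(x) \le \eta(d(p,p^*)) \le \omega(d(p,p^*))$ to get $\omega$-calmness with $a = \lambda_0$. The bookkeeping with $U_0 = U \cap W$ and the observation $\Omega(p) \subset A$ match the paper's proof exactly.
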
 

\begin{proof}
Let $F_{\lambda}$ be exact at $x^*$. Fix an arbitrary $\lambda > \lambda^*(x^*)$. Then there exists a neighbourhood
$V$ of $x^*$ such that
$$
  f(x) + \lambda \varphi(x) = F_{\lambda}(x) \ge F_{\lambda}(x^*) = f(x^*) \quad \forall x \in V \cap A.
$$
Therefore
$$
  f(x) - f(x^*) \ge - \lambda \varphi(x) \ge - \lambda \eta(d(p, p^*)) \quad 
  \forall x \in U_0 \cap \Omega(p) \quad \forall p \in B(p^*, r), 
$$
where $U_0 = U \cap V$. Thus, the problem $(\mathcal{P}_{p^*})$ is $\omega$-calm at $x^*$ for any function $\omega$ such
that $\omega(\cdot) \ge \eta(\cdot)$.  
\end{proof}

\begin{remark} \label{RmrkLinDependOnParam}
Let $f$ be l.s.c., $Y$ be a normed space, $P = Y$, and 
$$
  \Omega = \{ x \in X \mid 0 \in \Phi(x) \}, \quad \Omega(p) = \{ x \in X \mid 0 \in \Phi(x) - p \},
$$
where $\Phi \colon X \rightrightarrows Y$ is a set-valued mapping with closed values 
(cf.~\cite{UderzoCalm}, Corollary~3.1). Define $\varphi(x) = d(0, \Phi(x))$ and $p^* = 0$. 
Then $\Omega^{-1}(x) = \Phi(x)$, and, as it is easy to verify, one has
$$
  \varphi(x) = d(p^*, \Omega^{-1}(x)) \quad \forall x \in X, \quad
  \varphi(x) \le d(p, p^*) \quad \forall x \in \Omega(p) \quad \forall p \in P.
$$
Therefore by Theorem~\ref{ThCalmLocalPen} and the proposition above the penalty function $F_{\lambda}$ is exact at a
locally optimal solution of the problem $(\mathcal{P})$ if and only if the problem $(\mathcal{P}_{p^*})$ is
calm (i.e. $\omega$-calm for $\omega(t) \equiv t$) at this point. Roughly speaking, if a perturbation of a problem is
``linear'', then the calmness of this problem at a given point is equivalent to the exactness of the penalty function at
this point.
\end{remark}

\begin{remark} \label{RmrkParamViaPhi}
Note that the problem $(\mathcal{P})$ is equivalent to the problem
$$
  \min f(x) \quad \text{subject to} \quad \varphi(x) \le 0, \quad x \in A.
$$
Hence one can consider the following perturbation of the initial problem
\begin{equation} \label{PerProbInTermsOfPhi}
  \min f(x) \quad \text{subject to} \quad \varphi(x) \le p, \quad x \in A,
\end{equation}
where $p \ge 0$ and $p^* = 0$. Therefore, as it is easy to see, if $f$ is l.s.c., then the penalty function
$F_{\lambda}$ is exact at a locally optimal solution $x^*$ of the problem $(\mathcal{P})$ iff the problem
(\ref{PerProbInTermsOfPhi}) with $p = 0$ is calm at $x^*$. Note also that the inequality constraint $\varphi(x) \le p$
can be replaced with the equality constraint $\varphi(x) = p$ for $p \ge 0$.
\end{remark}

\section{Global Theory of Exact Penalty Functions}
\label{SectGlobTheory}

In this section, we study (globally) exact penalty functions. A penalty function is called (globally) \textit{exact} if
any point of global minimum of this function is also a globally optimal solution of the initial constrained optimization
problem. We discuss several approaches to the study of global exactness of penalty functions, and obtain some necessary
and sufficient conditions for a penalty function to be exact under different assumptions on the space $X$, and 
the functions $f$ and $\varphi$.

Throughout this section, we assume that there exists a globally optimal solution of the problem $(\mathcal{P})$, i.e.
that $f$ attains a global minimum on $\Omega$.

\subsection{Some Properties of Penalty Functions}

In this subsection, we discuss some properties of global minimizers of penalty functions. Most of these properties
are well-known.

For the sake of convenience define a set-valued mapping $G \colon \mathbb{R}_+ \rightrightarrows A$, where
$$
  G(\lambda) = \argmin_{x \in A} F_{\lambda}(x), \quad \forall \lambda \ge 0,
$$
i.e. $G(\lambda)$ is the set of all global minimizers of $F_{\lambda}$ on the set $A$. By definition, if for some 
$\lambda \ge 0$ the penalty function $F_{\lambda}$ does not attain a global minimum on the set $A$, then 
$G(\lambda) = \emptyset$. Recall that $\dom G = \{ \lambda \ge 0 \mid G(\lambda) \ne \emptyset \}$.

\begin{proposition}\cite{Demyanov} \label{PrpPropertOfPenFunc}
For any $\mu > \lambda \ge 0$ such that $\mu, \lambda \in \dom G$, and for all $x_{\lambda} \in G(\lambda)$,
$x_{\mu} \in G(\mu)$ the following hold:
\begin{enumerate}
\item{$f(x_{\mu}) \ge f(x_{\lambda})$ and $\varphi(x_{\mu}) \le \varphi(x_{\lambda})$;}

\item{if $\varphi(x_{\lambda}) = 0$, then $\varphi(x_{\mu}) = 0$, and $x_{\lambda}$ (as well as $x_{\mu}$) is a globally
optimal solution of the problem $(\mathcal{P})$.
}
\end{enumerate}
\end{proposition}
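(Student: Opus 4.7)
The plan is to extract everything from the two defining minimization inequalities for $x_\lambda$ and $x_\mu$ and then combine them algebraically. Since $x_\lambda \in G(\lambda)$ and $x_\mu \in G(\mu)$ are both feasible for the penalized problem on $A$ (and each is a competitor in the other's minimization), I would first write down
$$
f(x_\lambda) + \lambda \varphi(x_\lambda) \le f(x_\mu) + \lambda \varphi(x_\mu), \qquad
f(x_\mu) + \mu \varphi(x_\mu) \le f(x_\lambda) + \mu \varphi(x_\lambda).
$$
Adding these two inequalities cancels the $f$-terms and leaves $(\mu-\lambda)\varphi(x_\mu) \le (\mu-\lambda)\varphi(x_\lambda)$; since $\mu - \lambda > 0$, this yields $\varphi(x_\mu) \le \varphi(x_\lambda)$. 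Substituting this back into the first inequality gives $f(x_\lambda) - f(x_\mu) \le \lambda(\varphi(x_\mu) - \varphi(x_\lambda)) \le 0$, which establishes $f(x_\lambda) \le f(x_\mu)$. This handles part (1).

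For part (2), suppose $\varphi(x_\lambda) = 0$. Then the inequality $\varphi(x_\mu) \le \varphi(x_\lambda) = 0$, together with the nonnegativity of $\varphi$, forces $\varphi(x_\mu) = 0$. Consequently both $x_\lambda$ and $x_\mu$ lie in $M$ by definition of $M$ as the zero-set of $\varphi$, and hence in $\Omega = M \cap A$. To see that either of them solves $(\mathcal{P})$ globally, I would use that for every $y \in \Omega$ we have $\varphi(y) = 0$, so $F_\lambda(y) = f(y)$ and $F_\lambda(x_\lambda) = f(x_\lambda)$; since $x_\lambda$ minimizes $F_\lambda$ on $A \supset \Omega$, it follows that $f(x_\lambda) \le f(y)$ for all $y \in \Omega$. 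The same reasoning applied at $\mu$ gives the statement for $x_\mu$.

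There is no real obstacle here: the whole argument is the standard "cross-comparison" trick for parametric minimizers, and the monotonicity claims follow by a single summation. The only place where one has to be slightly careful is ensuring that $\mu - \lambda > 0$ is used in the right direction when dividing, and that the nonnegativity of $\varphi$ (together with $\lambda \ge 0$) is invoked to recover $f(x_\lambda) \le f(x_\mu)$ from the one-sided inequality; neither step requires any structural assumption beyond what has been set up in the section.
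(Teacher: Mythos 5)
The paper itself gives no proof of this proposition (it is quoted from the cited reference), and your cross-comparison argument is precisely the standard proof; it is correct as written. The only step worth making explicit is that cancelling the $f$-terms when adding the two minimality inequalities requires $f(x_\lambda)$ and $f(x_\mu)$ to be finite: since $f$ may take the value $+\infty$, one should note that comparing $F_\lambda(x_\lambda)$ (resp. $F_\mu(x_\mu)$) with $F_\lambda(y)=f(y)<+\infty$ at a feasible point $y$ supplied by the standing properness assumption, together with $\varphi\ge 0$ and $\lambda,\mu\ge 0$, shows both values are finite, after which your algebra and the argument for part (2) go through without change.
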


\begin{proposition} \label{PrpDefOfExPenFunc}
Let $\mu > \lambda \ge 0$ be such that $\mu, \lambda \in \dom G$, and let $x_{\lambda} \in G(\lambda)$ and
$x_{\mu} \in G(\mu)$. Suppose that $F_{\lambda}(x_{\lambda}) = F_{\mu}(x_{\mu})$. Then for all $\nu > \lambda$ one has
\begin{equation} \label{EquivOfOptimProblems}
  G(\nu) := \argmin_{x \in A} F_{\nu}(x) = \argmin_{x \in \Omega} f(x)
\end{equation}
or, equivalently, the sets of globally optimal solutions of the problem $(\mathcal{P})$, and of the problem
$$
  \min F_{\nu}(x) \quad \text{subject to} \quad x \in A
$$
coincide.
\end{proposition}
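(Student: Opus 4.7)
The plan is to first extract from the equality $F_\lambda(x_\lambda)=F_\mu(x_\mu)$ the much stronger conclusion that $\varphi(x_\mu)=0$, so that $x_\mu$ is actually a globally optimal solution of $(\mathcal{P})$. To see this, I will combine the hypothesis with minimality: since $x_\lambda\in G(\lambda)$, one has $F_\lambda(x_\mu)\ge F_\lambda(x_\lambda)=F_\mu(x_\mu)$, which after cancelling $f(x_\mu)$ rearranges to $(\lambda-\mu)\varphi(x_\mu)\ge 0$. Because $\lambda<\mu$ and $\varphi\ge 0$, this forces $\varphi(x_\mu)=0$, i.e.\ $x_\mu\in\Omega$.

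Once $x_\mu\in\Omega$ is established, for any $y\in\Omega$ the bound $F_\mu(x_\mu)\le F_\mu(y)=f(y)$ together with $F_\mu(x_\mu)=f(x_\mu)$ shows that $x_\mu\in\argmin_\Omega f$. In particular, denoting the common value $f^*:=\inf_\Omega f$, we get $F_\lambda(x_\lambda)=F_\mu(x_\mu)=f(x_\mu)=f^*$.

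Now fix any $\nu>\lambda$ and verify both inclusions in \eqref{EquivOfOptimProblems}. For $\supseteq$: every $x^*\in\argmin_\Omega f$ satisfies $\varphi(x^*)=0$, so $F_\nu(x^*)=f^*$; meanwhile for arbitrary $x\in A$, monotonicity of $F_\gamma$ in $\gamma$ gives $F_\nu(x)\ge F_\lambda(x)\ge F_\lambda(x_\lambda)=f^*$. Hence $x^*$ attains $\min_A F_\nu=f^*$, so $x^*\in G(\nu)$. For $\subseteq$: if $x_\nu\in G(\nu)$, then $F_\nu(x_\nu)=f^*$, i.e.\ $f(x_\nu)+\nu\varphi(x_\nu)=f^*$. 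Subtracting $(\nu-\lambda)\varphi(x_\nu)\ge 0$ gives $F_\lambda(x_\nu)=f^*-(\nu-\lambda)\varphi(x_\nu)\le f^*=F_\lambda(x_\lambda)$. Since $x_\lambda$ minimises $F_\lambda$ on $A$, the inequality must be an equality, and using $\nu-\lambda>0$ we conclude $\varphi(x_\nu)=0$. Therefore $x_\nu\in\Omega$ and $f(x_\nu)=f^*$, so $x_\nu\in\argmin_\Omega f$.

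The main conceptual obstacle is the first step: recognising that the seemingly mild numerical coincidence $F_\lambda(x_\lambda)=F_\mu(x_\mu)$ is in fact already sufficient to force $x_\mu$ onto the feasible set. After this, everything reduces to routine manipulation of the inequality $F_\nu(x)\ge F_\lambda(x)$ that follows from $\varphi\ge 0$ and $\nu>\lambda$; no additional assumption on $X$, $f$, or $\varphi$ is needed, and one does not need to know a priori that $G(\nu)$ is non-empty, because the argument explicitly exhibits minimisers of $F_\nu$ among the globally optimal solutions of $(\mathcal{P})$.
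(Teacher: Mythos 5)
Your proposal is correct and follows essentially the same route as the paper's proof: it first forces $\varphi(x_\mu)=0$ from the minimality of $x_\lambda$ (the paper does this by contradiction, you by a direct rearrangement of the same inequality), then identifies $\inf_{x\in A}F_\nu(x)=f^*$ via monotonicity of $F_\gamma$ in $\gamma$, and finally verifies both inclusions, with the $\subseteq$ direction handled by the same ``$\varphi(x_\nu)=0$'' argument that the paper invokes by reference.
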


\begin{proof}
Let us show, at first, that $\varphi(x_{\mu}) = 0$. Indeed, arguing by reductio ad absurdum, suppose that
$\varphi(x_{\mu}) > 0$. Then
$$
  F_{\mu}(x_{\mu}) = f(x_{\mu}) + \mu \varphi(x_{\mu}) = 
  F_{\lambda}(x_{\mu}) + (\mu - \lambda) \varphi(x_{\mu}) >
  F_{\lambda}(x_{\mu}),
$$
since $\mu > \lambda$, and $\varphi(x_{\mu}) > 0$. Consequently, 
$$
  F_{\mu}(x_{\mu}) > F_{\lambda}(x_{\mu}) \ge F_{\lambda}(x_{\lambda})
$$
by the definition of $x_{\lambda}$, which contradicts the assumptions. Thus, $\varphi(x_{\mu}) = 0$.

Fix an arbitrary $\nu > \lambda$. Taking into account the fact that $\varphi(x_{\mu}) = 0$ one obtains that
$$
  F_{\lambda}(x_{\lambda}) = F_{\mu}(x_{\mu}) = f(x_{\mu}) = f^* := \min_{x \in \Omega} f(x).
$$
Applying the fact that $F_{\lambda}$ is nondecreasing with respect to $\lambda$ one gets that
$$
  F_{\lambda}(x_{\lambda}) = \min_{x \in A} F_{\lambda}(x) \le \inf_{x \in A} F_{\nu}(x) \le
  \inf_{x \in \Omega} F_{\nu}(x) = \min_{x \in \Omega} f(x),
$$
which yields
$$
  \inf_{x \in A} F_{\nu}(x) = F_{\lambda}(x_{\lambda}) = f^*.
$$
Consequently, any globally optimal solution of the problem $(\mathcal{P})$ is a point of global minimum of $F_{\nu}$ on
the set $A$ or, equivalently, $\nu \in \dom G$ and $\argmin_{x \in \Omega} f(x) \subseteq G(\nu)$. On the other hand,
if $x_{\nu} \in G(\nu)$, then arguing in the same way as in the case of $x_{\mu}$ one can check that 
$\varphi(x_{\nu}) = 0$, which implies the desired result.  
\end{proof}

\begin{corollary} \label{CrlrDefExPenFunc}
Suppose that there exists $\lambda > 0$ such that
$$
  \inf_{x \in A} F_{\lambda}(x) = \min_{x \in \Omega} f(x) =: f^*
$$
(in particular, one can suppose that there exists $x_{\lambda} \in G(\lambda)$ such that 
$\varphi(x_{\lambda}) = 0$). Then for any $\mu > \lambda$ one has $G(\mu) = \argmin_{x \in \Omega} f(x)$.
\end{corollary}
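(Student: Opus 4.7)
The plan is to reduce the corollary to Proposition~\ref{PrpDefOfExPenFunc}. The hypothesis gives us an $x_{\lambda} \in G(\lambda)$ with $\varphi(x_{\lambda}) = 0$ (any globally optimal $x^* \in \arg\min_{x \in \Omega} f(x)$ works, since $F_{\lambda}(x^*) = f(x^*) = f^*$ equals $\inf_{x \in A} F_{\lambda}$ by assumption). To invoke the proposition for a fixed $\mu > \lambda$, I must show that $\mu \in \dom G$ and exhibit some $x_{\mu} \in G(\mu)$ satisfying $F_{\lambda}(x_{\lambda}) = F_{\mu}(x_{\mu})$.

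First I would use monotonicity of $F_{\lambda}$ in $\lambda$: since $F_{\mu} \ge F_{\lambda}$ pointwise, one has $\inf_{x \in A} F_{\mu}(x) \ge \inf_{x \in A} F_{\lambda}(x) = f^*$. On the other hand, for any $x^* \in \arg\min_{x \in \Omega} f(x)$ one has $F_{\mu}(x^*) = f(x^*) + \mu \cdot 0 = f^*$, so $\inf_{x \in A} F_{\mu}(x) = f^*$ and $x^* \in G(\mu)$. In particular $\mu \in \dom G$.

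Now pick any $x_{\mu} \in G(\mu)$. Since $F_{\mu}(x_{\mu}) = f^* = F_{\lambda}(x_{\lambda})$, the hypotheses of Proposition~\ref{PrpDefOfExPenFunc} are satisfied, and the proposition yields $G(\nu) = \arg\min_{x \in \Omega} f(x)$ for every $\nu > \lambda$, in particular for $\nu = \mu$.

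The only non-routine step is verifying $\mu \in \dom G$, and I do not expect it to pose a serious obstacle here because the monotonicity of $F_{\lambda}$ in $\lambda$ combined with the hypothesis immediately forces $\inf_{x \in A} F_{\mu}(x) = f^*$ to be attained at any feasible optimal point; once $G(\mu) \ne \emptyset$, the equality $F_{\lambda}(x_{\lambda}) = F_{\mu}(x_{\mu}) = f^*$ is automatic from the preceding computation, and the corollary follows in one line from Proposition~\ref{PrpDefOfExPenFunc}.
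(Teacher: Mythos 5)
Your proof is correct and is exactly the intended derivation: using the standing assumption of Section~\ref{SectGlobTheory} that $(\mathcal{P})$ has a globally optimal solution $x^*$, the hypothesis forces $x^* \in G(\lambda)$ and $x^* \in G(\mu)$ with $F_{\lambda}(x^*) = F_{\mu}(x_{\mu}) = f^*$, so Proposition~\ref{PrpDefOfExPenFunc} applies directly. This coincides with the paper's (implicit) proof of the corollary, merely filling in the routine verification that $\lambda, \mu \in \dom G$.
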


Proposition~\ref{PrpDefOfExPenFunc} and the corollary above motivate us to introduce the definition of exact penalty
function.

\begin{definition}
The penalty function $F_{\lambda} = f + \lambda \varphi$ is called \textit{exact} (for the problem $(\mathcal{P})$) if
there exists $\lambda_0 \ge 0$ such that for any $\lambda > \lambda_0$ the set of global minimizers of $F_{\lambda}$ on
$A$ coincides with the set of globally optimal solutions of the problem $(\mathcal{P})$. The greatest lower bound of all
such $\lambda_0 \ge 0$ is referred to as \textit{the least exact penalty parameter} of the penalty function
$F_{\lambda}$, and is denoted by $\lambda^*(f, \varphi)$.
\end{definition}

\begin{remark}
Note that from Corollary~\ref{CrlrDefExPenFunc} it follows that the penalty function $F_{\lambda}$ is exact iff 
the equality $\inf_{x \in A} F_{\lambda}(x) = f^*$ holds true for some $\lambda \ge 0$. Furthermore, the greatest lower
bound of all such $\lambda$ coincides with $\lambda^*(f, \varphi)$.
\end{remark}

It is easy to verify that the proposition below, that describes another approach to the definition of exact penalty
function, holds true.

\begin{proposition} \label{PrpEquivDefOfExactPenFunc}
Let the penalty function $F_{\lambda}$ be exact. Then
\begin{equation} \label{ExPenParRepr}
  \lambda^*(f, \varphi) = \sup_{x \in A \setminus \Omega} \frac{f^* - f(x)}{\varphi(x)}.
\end{equation}
Moreover, the penalty function $F_{\lambda}$ is exact if and only if
the supremum on the right-hand side of (\ref{ExPenParRepr}) is finite.
\end{proposition}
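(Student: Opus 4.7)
The plan is to translate the equality $\inf_{x\in A} F_\lambda(x) = f^*$---which, by the remark following Corollary~\ref{CrlrDefExPenFunc}, characterises the exactness of $F_\lambda$ and whose infimal $\lambda$ equals $\lambda^*(f,\varphi)$---into a direct inequality involving the quantity $S := \sup_{x \in A \setminus \Omega}(f^* - f(x))/\varphi(x)$. Both halves of the proposition will then fall out of a single pointwise computation, without appealing to any deeper machinery.

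First, I would observe that the equality $\inf_{x\in A}F_\lambda(x) = f^*$ is equivalent to the pointwise inequality $F_\lambda(x) \ge f^*$ for every $x \in A$. Indeed, any global minimizer of $(\mathcal{P})$---which exists by the standing assumption of this section---lies in $\Omega$ and there satisfies $F_\lambda = f = f^*$, so as soon as the bound $F_\lambda \ge f^*$ holds on $A$ the infimum is attained with value $f^*$. On $\Omega$ the inequality $F_\lambda(x) = f(x) \ge f^*$ holds automatically, so the condition reduces to $F_\lambda(x) \ge f^*$ for all $x \in A \setminus \Omega$. On this latter set $\varphi(x) > 0$, so dividing through converts the requirement into the equivalent form $(f^* - f(x))/\varphi(x) \le \lambda$ for every $x \in A\setminus\Omega$, i.e.\ into $\lambda \ge S$.

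Thus the set of $\lambda \ge 0$ satisfying $\inf_{x\in A} F_\lambda(x) = f^*$ is exactly $[\max\{S, 0\}, +\infty)$ when $S < +\infty$, and empty otherwise. By the remark recalled above, this set is nonempty precisely when $F_\lambda$ is exact, which is the ``if and only if'' claim; and its infimum is by definition $\lambda^*(f,\varphi)$, which yields formula~(\ref{ExPenParRepr}) in the nontrivial situation $A \setminus \Omega \neq \emptyset$ with $S \ge 0$.

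I do not expect any substantive obstacle: the argument is just a rearrangement of the inequality $F_\lambda \ge f^*$ using that $\varphi > 0$ off $\Omega$. The only mildly delicate point is cosmetic---if $A = \Omega$ (making the supremum vacuous) or if $S < 0$, the right-hand side of~(\ref{ExPenParRepr}) must be read as $\max\{S, 0\}$ in order to respect the definition of $\lambda^*(f,\varphi)$ as a nonnegative quantity; I would mention this once and then suppress it, as the exactness hypothesis of the first half of the proposition puts us in the case $S < +\infty$.
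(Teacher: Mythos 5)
Your argument is correct, and it is exactly the elementary verification the paper omits (the proposition is stated with the comment that it ``is easy to verify''): rewrite exactness as $\inf_{x\in A}F_{\lambda}(x)=f^*$ via the remark after Corollary~\ref{CrlrDefExPenFunc}, note this is the pointwise bound $F_{\lambda}\ge f^*$ on $A$ since a global solution of $(\mathcal{P})$ exists, and divide by $\varphi(x)>0$ on $A\setminus\Omega$. Your observation that the formula should be read as $\max\{S,0\}$ in the degenerate cases $A=\Omega$ or $S<0$ is a legitimate (and slightly more careful) reading of the paper's statement, since $\lambda^*(f,\varphi)\ge 0$ by definition.
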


\begin{remark}
Unlike the case of local exactness (cf.~Remark~\ref{RmrkLocalExPenPar} above), any globally optimal solution of the
problem $(\mathcal{P})$ is a point of global minimum of the penalty function $F_{\lambda}$ on the set $A$ when 
$\lambda = \lambda^*(f, \varphi)$. However, in the general case, the equality (\ref{EquivOfOptimProblems}) holds true
only if $\lambda > \lambda^*(f, \varphi)$.
\end{remark}

Let us also mention here one well-known property of penalty functions that will be used in the following sections.

\begin{proposition} \label{PrpConvToZero}
Let there exist $\lambda_0 \ge 0$ such that for any $\lambda \ge \lambda_0$ the penalty function $F_{\lambda}$ attains
a global minimum on the set $A$, i.e. $[\lambda_0, +\infty) \subset \dom G$. Then for any selection $x(\cdot)$ of the
set--valued mapping $G(\cdot)$ one has
\begin{equation} \label{ConvToZero}
  \varphi(x(\lambda)) \to 0 \text{ as } \lambda \to \infty.
\end{equation}
\end{proposition}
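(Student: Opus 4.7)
The plan is to use the monotonicity properties of $f(x(\lambda))$ and $\varphi(x(\lambda))$ supplied by Proposition~\ref{PrpPropertOfPenFunc}, combined with the assumption that the original problem admits a globally optimal solution, to bound $\lambda \varphi(x(\lambda))$ uniformly in $\lambda$.

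First I would note that by Proposition~\ref{PrpPropertOfPenFunc}, for any $\lambda_0 \le \lambda < \mu$ the inequality $\varphi(x(\mu)) \le \varphi(x(\lambda))$ holds, so the nonnegative function $\lambda \mapsto \varphi(x(\lambda))$ is nonincreasing on $[\lambda_0, +\infty)$. Therefore the limit $c := \lim_{\lambda \to \infty} \varphi(x(\lambda))$ exists in $[0, +\infty)$, and it suffices to rule out $c > 0$.

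Next I would fix a globally optimal solution $x^* \in \Omega$ of $(\mathcal{P})$, which exists by the standing assumption of this section; since $f|_{\Omega}$ is proper and $x^*$ is optimal, $f(x^*) = f^* < +\infty$. Because $\varphi(x^*) = 0$, we have $F_\lambda(x^*) = f(x^*)$ for every $\lambda \ge 0$. Then for any $\lambda \ge \lambda_0$,
$$
  f(x(\lambda)) + \lambda \varphi(x(\lambda)) = F_\lambda(x(\lambda)) \le F_\lambda(x^*) = f(x^*).
$$
By Proposition~\ref{PrpPropertOfPenFunc} the map $\lambda \mapsto f(x(\lambda))$ is nondecreasing, so $f(x(\lambda)) \ge f(x(\lambda_0))$; moreover $f(x(\lambda_0))$ is finite since $f(x(\lambda_0)) \le F_{\lambda_0}(x(\lambda_0)) \le F_{\lambda_0}(x^*) = f(x^*) < +\infty$. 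Consequently,
$$
  \lambda \varphi(x(\lambda)) \le f(x^*) - f(x(\lambda_0)) \quad \forall \lambda \ge \lambda_0,
$$
where the right-hand side is a finite constant independent of $\lambda$.

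If $c > 0$ were true, then $\lambda \varphi(x(\lambda)) \ge \lambda c \to +\infty$, contradicting the uniform bound above. Therefore $c = 0$, which yields (\ref{ConvToZero}). There is no real obstacle here; the only subtlety is to justify that $f(x(\lambda_0))$ (and $f(x^*)$) are finite so that the subtraction is legitimate, which is handled by the properness assumption on $f|_\Omega$ and the minimizing property of $x(\lambda_0)$ applied to the feasible test point $x^*$.
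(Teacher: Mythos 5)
Your proof is correct and rests on the same two ingredients as the paper's argument: the monotonicity of $f(x(\cdot))$ from Proposition~\ref{PrpPropertOfPenFunc} and the comparison of $F_{\lambda}(x(\lambda))$ with the value of $F_{\lambda}$ at a fixed feasible point of finite objective value (the paper uses any $y \in \Omega$ with $f(y) < +\infty$, you use a global minimizer $x^*$, which exists by the section's standing assumption). The only difference is presentational: you extract the uniform bound $\lambda\varphi(x(\lambda)) \le f(x^*) - f(x(\lambda_0))$ directly, whereas the paper phrases the same estimate as a reductio ad absurdum along a sequence $\lambda_n \to \infty$; your finiteness checks on $f(x(\lambda_0))$ and $f(x^*)$ are adequate.
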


\begin{proof}
Arguing by reductio ad absurdum, suppose that (\ref{ConvToZero}) does not hold true. Then there exist a selection
$x(\cdot)$ of $G(\cdot)$, $m > 0$, and an increasing sequence $\{ \lambda_n \}$ such that $\lambda_n \to \infty$ and
$\varphi(x(\lambda_n)) \ge m$ as $n \to \infty$. From the first statement of Proposition~\ref{PrpPropertOfPenFunc} it
follows that $f(x(\lambda_n)) \ge f(x(\lambda_1))$ for all $n \in \mathbb{N}$. Therefore
$$
  F_{\lambda_n}(x(\lambda_n)) = f(x(\lambda_n)) + \lambda_n \varphi(x(\lambda_n)) \ge f(x(\lambda_1)) + \lambda_n m,
$$
which implies that
\begin{equation} \label{ConvToInfty}
  F_{\lambda_n}(x(\lambda_n)) \to \infty \text{ as } n \to \infty,
\end{equation}
since $\lambda_n \to \infty$ as $n \to \infty$. On the other hand, for any $y \in \Omega$ such that $f(y) < +\infty$ one
has
$$
  F_{\lambda_n}(x(\lambda_n)) \le F_{\lambda_n}(y) = f(y) < + \infty \quad \forall n \in \mathbb{N},
$$
which contradicts (\ref{ConvToInfty}).  
\end{proof}

\subsection{Necessary and Sufficient Conditions for Exact Penalization: Specific Cases}

We start with the convex case. Taking into account the fact that any local minimum of a convex function is a global one
we obtain that in the convex case the penalty function $F_{\lambda}$ is exact iff it is exact at one of globally
optimal solutions of the initial problem.

\begin{proposition}
Let $X$ be a topological vector space, $M, A \subset X$ be convex sets, and the functions $f$ and $\varphi$ be convex
on the set $A$. Then for the penalty function $F_{\lambda}$ to be exact it is necessary and sufficient that there exists
a globally optimal solution $x^*$ of the problem $(\mathcal{P})$ such that 
$F_{\lambda}$ is exact at $x^*$. Moreover, if $F_{\lambda}$ is exact, then 
$\lambda^*(f, \varphi) = \lambda^*(x^*)$ for any globally optimal solution $x^*$ of the problem $(\mathcal{P})$.
\end{proposition}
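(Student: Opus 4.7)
The plan is to exploit the single decisive fact that on a convex set $A$ a convex function has the property that every local minimizer is a global one, so that local and global exactness become equivalent in this setting. Note first that since $\lambda \ge 0$ and both $f$ and $\varphi$ are convex on $A$, the penalty function $F_{\lambda} = f + \lambda \varphi$ is also convex on $A$; this is the only use of the convexity hypotheses.

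For the sufficiency direction I would argue as follows. Suppose $F_{\lambda}$ is locally exact at a globally optimal solution $x^*$ of $(\mathcal{P})$ and fix any $\lambda > \lambda^*(x^*)$. By the definition of the local least exact penalty parameter, $x^*$ is a point of local minimum of the convex function $F_{\lambda}$ on the convex set $A$, hence $x^*$ is actually a point of global minimum of $F_{\lambda}$ on $A$. Consequently
\[
  \inf_{x \in A} F_{\lambda}(x) = F_{\lambda}(x^*) = f(x^*) = f^*,
\]
where the second equality uses $\varphi(x^*)=0$ and the third uses the global optimality of $x^*$. By Corollary~\ref{CrlrDefExPenFunc} (and the remark following it) this means that $F_{\lambda}$ is globally exact and $\lambda^*(f,\varphi) \le \lambda$. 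Letting $\lambda \downarrow \lambda^*(x^*)$ yields $\lambda^*(f,\varphi) \le \lambda^*(x^*)$.

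For the necessity direction, assume $F_{\lambda}$ is globally exact and let $x^*$ be any globally optimal solution of $(\mathcal{P})$. For every $\lambda > \lambda^*(f,\varphi)$ the set of global minimizers of $F_{\lambda}$ on $A$ equals $\argmin_{x\in\Omega} f(x)$, which contains $x^*$. In particular $x^*$ is a local minimizer of $F_{\lambda}$ on $A$, so $F_{\lambda}$ is locally exact at $x^*$ and $\lambda^*(x^*) \le \lambda^*(f,\varphi)$. Combining the two directions, as soon as $F_{\lambda}$ is known to be exact at some globally optimal solution, it is globally exact and $\lambda^*(x^*) = \lambda^*(f,\varphi)$ for every globally optimal $x^*$.

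There is no real obstacle here; the entire content is the equivalence of local and global minima for convex functions on a convex set, and the only care needed is to keep the quantifiers straight: global exactness gives local exactness at \emph{every} globally optimal $x^*$, whereas local exactness at just \emph{one} such $x^*$ is enough to recover global exactness. The equality of the two least exact penalty parameters then follows by a standard sandwich of the two inequalities obtained in the two directions, passing to the infimum over admissible $\lambda$.
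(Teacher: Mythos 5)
Your proposal is correct and follows precisely the route the paper intends: the paper states this proposition without a formal proof, offering only the observation that every local minimum of a convex function on a convex set is global, and your argument is exactly that observation made rigorous, combined with the characterization of global exactness via $\inf_{x \in A} F_{\lambda}(x) = f^*$ (Corollary~\ref{CrlrDefExPenFunc} and the remark following the definition of exact penalty function). The quantifier bookkeeping (local exactness at one globally optimal solution suffices for global exactness, which in turn yields local exactness at every globally optimal solution, whence $\lambda^*(x^*) = \lambda^*(f,\varphi)$ by the two inequalities) is handled correctly, so nothing is missing.
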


\noindent\textit{Remark.}
{In the convex case one can utilize the well-known optimality condition $0 \in \partial F_{\lambda}(x^*)$ in order to
prove the exactness of the penalty function $F_{\lambda}$. Namely, it is easy to check that in the convex case the
penalty function $F_{\lambda}$ is exact iff there exist a globally optimal solution $x^*$ of the problem $(\mathcal{P})$
and $\lambda \ge 0$ such that $0 \in \partial F_{\lambda}(x^*)$. Furthermore, the greatest lower bound of all such
$\lambda \ge 0$ coincides with the least exact penalty parameter $\lambda^*(f, \varphi)$. Note also that the existence
of such $\lambda$ usually follows from the existence of a Lagrange multiplier corresponding to a globally optimal
solution of the problem $(\mathcal{P})$. In particular, one can verify that the validity of Slater's condition for the
problem
\begin{equation} \label{ConvexProblemExample}
  \min f(x) \quad \text{subject to} \quad g_i(x) \le 0, \quad i \in I = \{ 1, \ldots, m \}, \quad x \in A,
\end{equation}
where the functions $f$ and $g_i$, $i \in I$, and the set $A$ are convex, implies that the penalty functions
$$
  F_{\lambda}(x) = f(x) + \lambda \sum_{i = 1}^m \max\{ 0, g_i(x) \}, \:
  H_{\lambda}(x) = f(x) + \lambda \max\{ 0, g_1(x), \ldots, g_m(x) \}
$$
are exact. Furthermore, in this case one can easily estimate the least exact penalty parameter $\lambda^*(f, \varphi)$
via Lagrange multipliers corresponding to optimal solutions of the problem
\eqref{ConvexProblemExample} (cf.~\cite{Bertsekas}). Let us also note that somewhat similar results on the exactness of
penalty functions can be obtained for some DC optimization problems with inequality constraints \cite{ThiDinhNgai}.
}
\vspace{2mm}

In the general case, it is clear that if the penalty function $F_{\lambda}$ is exact, then it is exact at every globally
optimal solution of the problem $(\mathcal{P})$, and
$$
  \lambda^*(x^*) \le \lambda^*(f, \varphi) \quad \forall x^* \in \argmin_{x \in \Omega} f(x).
$$
This condition becomes sufficient in the case when the set $A$ is compact.

\begin{theorem} \label{ThExPenFuncCompactSet}
Let $X$ be a topological space, $A \subset X$ be a compact set, and let the functions $f$ and $\varphi$ be l.s.c. 
on $A$. Then the penalty function $F_{\lambda}$ is exact if and only if it is exact at every globally optimal solution
of the problem $(\mathcal{P})$.
\end{theorem}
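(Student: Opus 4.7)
The plan is to prove necessity by a straightforward observation and sufficiency by a compactness argument combined with the convergence property of minimizers established earlier.

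Necessity is immediate from the definition: if $F_\lambda$ is globally exact, then for $\lambda > \lambda^*(f,\varphi)$ every globally optimal solution $x^*$ of $(\mathcal{P})$ is a global, hence local, minimizer of $F_\lambda$ on $A$, so $F_\lambda$ is exact at $x^*$.

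For sufficiency I would proceed by contradiction or, cleanly, by a direct limit argument. First, since $A$ is compact and each $F_\lambda$ is l.s.c.\ on $A$, the set $G(\lambda)$ is nonempty for every $\lambda\ge 0$, so $\dom G = \mathbb{R}_+$. Pick any selection $x(\cdot)$ of $G(\cdot)$; by Proposition~\ref{PrpConvToZero} we have $\varphi(x(\lambda)) \to 0$ as $\lambda \to \infty$. Fix any sequence $\lambda_n \to \infty$ and, using compactness of $A$, extract a subsequence (not relabeled) such that $x(\lambda_n) \to \bar x \in A$.

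The key intermediate step is to show $\bar x$ is a globally optimal solution of $(\mathcal{P})$. From lower semicontinuity of $\varphi$ and $\varphi(x(\lambda_n)) \to 0$ we get $\varphi(\bar x) \le \liminf \varphi(x(\lambda_n)) = 0$, whence $\bar x \in M \cap A = \Omega$. Next, for any $y \in \Omega$ with $f(y)<+\infty$, the inequality
$$
  f(x(\lambda_n)) \le f(x(\lambda_n)) + \lambda_n \varphi(x(\lambda_n)) = F_{\lambda_n}(x(\lambda_n)) \le F_{\lambda_n}(y) = f(y)
$$
gives $f(x(\lambda_n)) \le f^*$; lower semicontinuity of $f$ then yields $f(\bar x) \le \liminf f(x(\lambda_n)) \le f^*$, and since $\bar x \in \Omega$ we conclude $f(\bar x) = f^*$.

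Now invoke the hypothesis: $F_\lambda$ is exact at $\bar x$. Fix $\lambda_0 > \lambda^*(\bar x)$. By Proposition~\ref{PrpUniformLocalPenal} there is a neighbourhood $U$ of $\bar x$ such that $F_\lambda(x) \ge F_\lambda(\bar x) = f^*$ for all $x \in U \cap A$ and all $\lambda \ge \lambda_0$. Since $x(\lambda_n) \to \bar x$, eventually $x(\lambda_n) \in U$ and $\lambda_n \ge \lambda_0$, hence $F_{\lambda_n}(x(\lambda_n)) \ge f^*$. Combined with the reverse estimate $F_{\lambda_n}(x(\lambda_n)) \le f^*$ established above, we get $\inf_{x\in A} F_{\lambda_n}(x) = f^*$ for all sufficiently large $n$. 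By Corollary~\ref{CrlrDefExPenFunc}, $F_\lambda$ is exact.

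The main obstacle I anticipate is verifying that the cluster point $\bar x$ really lies in $\argmin_\Omega f$; this is where both lower semicontinuity hypotheses are used in an essential way (on $\varphi$ to land in $\Omega$, on $f$ to propagate the inequality $f(x(\lambda_n)) \le f^*$ to the limit). The uniform neighbourhood furnished by Proposition~\ref{PrpUniformLocalPenal} is then exactly what is needed to pass from local exactness at the single point $\bar x$ to the equality of infima that triggers Corollary~\ref{CrlrDefExPenFunc}.
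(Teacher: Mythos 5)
Your overall strategy is exactly the paper's: use compactness and lower semicontinuity to get $\dom G=\mathbb{R}_+$, invoke Proposition~\ref{PrpConvToZero} to force $\varphi$ of the minimizers to zero, pass to a limit point $\bar x$, show $\bar x\in\argmin_\Omega f$ via the two l.s.c.\ hypotheses, and then combine Proposition~\ref{PrpUniformLocalPenal} with Corollary~\ref{CrlrDefExPenFunc}. However, there is one genuine gap: the theorem is stated for an arbitrary topological space $X$, and in that generality a compact set $A$ need \emph{not} be sequentially compact, so your step ``fix $\lambda_n\to\infty$ and, using compactness of $A$, extract a convergent subsequence of $x(\lambda_n)$'' is not justified. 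Compactness only guarantees a convergent \emph{subnet}, and this is precisely why the paper treats the selection $x(\cdot)$ as a net indexed by $\Lambda=\mathbb{R}_+$ and extracts a convergent subnet $\{y_\gamma\}$ (citing Kelley). Your argument as written is valid when $X$ is metrizable (or $A$ is sequentially compact), and indeed the paper's finite-dimensional Theorem~\ref{ThExPenFiniteDim} is proved by exactly your sequence argument, but it does not cover the stated hypothesis.

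The gap is repairable without changing the structure: replace the subsequence by a subnet. The l.s.c.\ steps (landing in $\Omega$, propagating $f(y_\gamma)\le f^*$ to the limit) go through for nets unchanged. The only place requiring extra care is the final step: you need the penalty parameters along the subnet to eventually exceed $\lambda_0$ \emph{and} the subnet to eventually lie in the neighbourhood $U$, simultaneously. For a subnet this follows from the monotonicity and cofinality of the reindexing map $h\colon\Gamma\to\Lambda$ (choose $\gamma_1$ with $h(\gamma_1)\ge\lambda_0$ and an upper bound $\gamma^*$ of $\gamma_0,\gamma_1$), which is the small bookkeeping the paper carries out explicitly; with sequences you got this for free from $\lambda_n\to\infty$, but with nets it must be stated.
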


\begin{proof}
Suppose that $F_{\lambda}$ is exact at every globally optimal solution of the problem $(\mathcal{P})$. Taking into
account the facts that $A$ is compact, and the functions $f$ and $\varphi$ are l.s.c. one obtains that for any 
$\lambda \ge 0$ the penalty function $F_{\lambda}$ attains a global minimum on $A$, i.e. $\dom G = \mathbb{R}_+$.

Choose a selection $x(\cdot)$ of the set-valued mapping $G(\cdot)$. One can consider $x(\cdot)$ as a net 
$\{ x_{\lambda} \}$, $\lambda \in \Lambda = \mathbb{R}_+$, in $A$, where $x_{\lambda} = x(\lambda)$.
The set $A$ is compact. Therefore there exists a subnet $y_{\gamma}$, $\gamma \in \Gamma$ of the net 
$\{ x_{\lambda} \}$ that converges to some $x^* \in A$ (see, e.g., \cite{Kelley}, Theorem 5.2). 
By Proposition~\ref{PrpConvToZero} the net $\{ \varphi(x_{\lambda}) \}$, $\lambda \in \Lambda$ converges to zero; hence,
its subnet $\{ \varphi(y_{\gamma}) \}$, $\gamma \in \Gamma$ also converges to zero. Thus, taking into
account the lower semicontinuity of $\varphi$ on $A$ one gets that $\varphi(x^*) = 0$ or, equivalently, $x^* \in M$,
since $M = \{ x \in X \mid \varphi(x) = 0 \}$. Consequently, $x^* \in \Omega = M \cap A$.

Let us show that $x^*$ is a globally optimal solution of the problem $(\mathcal{P})$. Indeed, from the facts that
$y_{\gamma}$ is a point of global minimum of $F_{\lambda}$ on $A$ for some $\lambda \ge 0$, and the function $\varphi$
is nonnegative it follows that $f(y_{\gamma}) \le f^*$ for any $\gamma \in \Gamma$. Therefore, applying the lower
semicontinuity of $f$ on $A$ one gets
$$
  f(x^*) \le \lim f(y_{\gamma}) \le f^*,
$$
which yields $f(x^*) = f^*$ by the fact that $x^* \in \Omega$. Thus, $x^*$ is a globally optimal solution of the problem
$(\mathcal{P})$, and $F_{\lambda}$ is exact at $x^*$. 

Fix an arbitrary $\lambda_0 > \lambda^*(x^*)$. Then by Proposition~\ref{PrpUniformLocalPenal} there exists a
neighbourhood $U$ of the point $x^*$ such that
$$
  F_{\lambda}(x^*) \le F_{\lambda}(x) \quad \forall x \in U \cap A \quad \forall \lambda \ge \lambda_0.
$$
From the fact that the net $\{ y_{\gamma} \}$ converges to $x^*$ it follows that there exists $\gamma_0 \in \Gamma$ such
that $y_{\gamma} \in U$ for any $\gamma \ge \gamma_0$, $\gamma \in \Gamma$. Therefore for any $\lambda \ge \lambda_0$
the following inequality holds true:
$$
  F_{\lambda}(x^*) \le F_{\lambda}(y_{\gamma}) \quad \forall \gamma \ge \gamma_0.
$$
By the definition of subnet there exists a function $h \colon \Gamma \to \Lambda$ that is monotone, cofinal (i.e. for
any $\lambda \in \Lambda$ there exists $\gamma \in \Gamma$ for which $h(\gamma) \ge \lambda$) and such that
$y_{\gamma} = x_{h(\gamma)}$ for all $\gamma \in \Gamma$. Hence there exists $\gamma_1 \in \Gamma$ such that
$h(\gamma_1) \ge \lambda_0$, since $h$ is cofinal. Let $\gamma^*$ be an upper bound of the pair 
$\{ \gamma_0, \gamma_1 \}$. Then
$$
  F_{h(\gamma)}(x^*) \le F_{h(\gamma)}(y_{\gamma}) = F_{h(\gamma)}(x_{h(\gamma)}) \quad
  \forall \gamma \ge \gamma^*,
$$
which implies that $x^* \in \Omega$ is a point of global minimum of the function $F_{h(\gamma)}$ for any 
$\gamma \ge \gamma^*$. Consequently, the penalty function $F_{\lambda}$ is exact by Corollary~\ref{CrlrDefExPenFunc}.  
\end{proof}

However, if the set $A$ is not compact, then the exactness of the penalty function $F_{\lambda}$ at every globally
optimal solution of the problem $(\mathcal{P})$ is not sufficient for $F_{\lambda}$ to be an exact penalty function even
in the one-dimensional case.

\begin{example} \label{ExmplNonregFuncs}
Let $X = A = \mathbb{R}$, and $M = \Omega = (- \infty, 0]$. Set $f(x) = - x$, when $x \le 1$. Define $f(x) = - 1 / n$,
when $x \in ( (n - 1)^2 + 1, n^2 ]$, and
$$
  f(x) = -\frac{1}{n-1} + \left(\frac{1}{n - 1} - \frac{1}{n} \right) \big(x - (n - 1)^2\big),
$$
when $x \in ( (n - 1)^2, (n - 1)^2 + 1 ]$ for any $n \in \mathbb{N}$ such that $n \ge 2$. Thus, the function $f$ is
constant on the intervals $( (n - 1)^2 + 1, n^2 ]$ (namely, $f(x) = - 1 / n$ on these intervals), and these ``steps''
are connected by linear functions in such a way that $f$ is a continuous piecewise linear function.

Define
$$
  \varphi(x) = \begin{cases}
    0, \text{ if } x \in (-\infty, 0], \\
    x, \text{ if } x \in (0, 1], \\
    1 / x, \text{ if } x \in [1, +\infty).
  \end{cases}
$$
Note that both functions $f$ and $\varphi$ are Lipschitz continuous on $\mathbb{R}$ with a Lipschitz constant $L = 1$.

The only globally optimal solution of the problem $(\mathcal{P})$ is the point $x^* = 0$. A direct
computation shows that $\overline{\lambda}(0) = 1$. Hence, as it is natural to expect, the penalty function
$F_{\lambda}$ is exact at the point $0$, and the least exact penalty parameter at this point equals $1$.

Let us compute a global minimum of the function $F_{\lambda}$. Fix $\lambda = k \in \mathbb{N}$ with $k \ge 2$. At
first, note that $f(x) = - \varphi(x)$ for all $x \in [0, 1]$, while $f(x) \ge -1$ and $\varphi(x) \ge 1 / 2$ for 
any $x \in [1, 2]$, which implies
\begin{equation} \label{example1}
  F_k(x) \ge F_2(x) \ge 0 \quad \forall x \le 2.
\end{equation}
Observe that $f(x) \ge - 1 / n$ for any $x \in [ (n - 1)^2 + 1, n^2 + 1 ]$ and $n \ge 2$, and
$$
  \frac{1}{n} \le \frac{k}{x} \quad \forall x \in [ (n - 1)^2 + 1, n^2 + 1 ] \quad \forall n \in \{ 2, \ldots, k - 1 \}.
$$
Therefore for any $n \in \{ 2, \ldots, k - 1 \}$ one has
\begin{equation} \label{example2}
  F_k(x) = f(x) + \frac{k}{x} \ge - \frac{1}{n} + \frac{k}{x} \ge 0 \quad 
  \forall x \in [ (n - 1)^2 + 1, n^2 + 1 ].
\end{equation}
On the other hand, if $x \in [ (k - 1)^2 + 1, k^2 ]$, then
\begin{equation} \label{example3}
  F_k(x) = - \frac{1}{k} + \frac{k}{x} \ge - \frac{1}{k} + \frac{k}{k^2} = 0.
\end{equation}
Combining (\ref{example1})--(\ref{example3}) one obtains that
$$
  F_k (x) = f(x) + k \varphi(x) \ge 0 \quad \forall x \in (-\infty, k^2].
$$
It is easy to check that for any $n \in \mathbb{N}$, $n > k \ge 2$ one has
$$
  (F_k)'(x) = f'(x) + k \varphi'(x) = \frac{1}{n(n - 1)} - \frac{k}{x^2} > 0 \quad 
  \forall x \in \big( (n - 1)^2, (n - 1)^2 + 1 \big),
$$
and $(F_k)'(x) = - k / x^2 < 0$ for all $x \in ((n - 1)^2 + 1, n^2)$. Hence the function $F_k$ strictly increases on 
$( (n - 1)^2, (n - 1)^2 + 1 )$, and strictly decreases on the segment $((n - 1)^2 + 1, n^2)$, when $n > k$. Therefore
the penalty function $F_k$ attains a global minimum on the segment $[(n - 1)^2, n^2]$ at one of the endpoints of this
segment for any $n > k$. Consequently, taking into account the fact that
$$
  F_k(n^2) = - \frac{1}{n} + \frac{k}{n^2} = \frac{k - n}{n^2}, \quad
  \min_{n \in \mathbb{N}} \frac{k - n}{n^2} = - \frac{1}{2k} + \frac{k}{(2k)^2} = - \frac{1}{4k} < 0,
$$
one obtains that the penalty function $F_k$ has the unique point of global minimum $x_k = 4 k^2$, if $k \ge 2$. Thus,
$F_{\lambda}$ is not an exact penalty function. 

Note that in this example $x_k \to \infty$ as $k \to \infty$, i.e. a point of global minimum of $F_{\lambda}$ tends to
infinity as $\lambda \to \infty$ or, in other words, there is no bounded selection of the set--valued mapping
$G(\cdot)$.
\end{example}

\begin{remark}
The example above disproves Theorem~3.4.2 from~\cite{Demyanov}.
\end{remark}

\subsection{Non-degeneracy of a Penalty Function}

Example \ref{ExmplNonregFuncs} motivates us to give the definition of a non-degenerate penalty function.

\begin{definition}
Let $X$ be a normed space. The penalty function $F_{\lambda} = f + \lambda \varphi$ is said to be
\textit{non-degenerate}, if there exists $\lambda_0 \ge 0$ such that for any $\lambda \ge \lambda_0$ the penalty
function $F_{\lambda}$ attains a global minimum on the set $A$, and there exists a selection $x(\cdot)$ of the
set-valued mapping $G(\cdot)$ such that the set $\{ x(\lambda) \mid \lambda \ge \lambda_0 \}$ is bounded.
\end{definition}

Roughly speaking, the non-degeneracy condition means that for any sufficiently large $\lambda \ge 0$ the penalty
function $F_{\lambda}$ attains a global minimum on the set $A$, and its global minimizers on $A$ do not escape to
infinity as $\lambda \to + \infty$.

\begin{remark}
The definition of non-degeneracy can be easily transformed to the case when $X$ is a topological vector space or a
metric space. In the latter case, one must assume that the set $\{ x(\lambda) \mid \lambda \ge \lambda_0 \}$ has finite
diameter. One can reformulate main results about the non-degeneracy of a penalty function discussed below to these more
general cases.
\end{remark}

It is obvious that if the set $A$ is bounded, and $F_{\lambda}$ attains a global minimum on $A$ for all sufficiently
large $\lambda \ge 0$, then $F_{\lambda}$ is non-degenerate. Let us describe some other simple sufficient conditions for
non-degeneracy in the case when the set $A$ is unbounded. These conditions are based either on a nonlocal error bound
for the penalty term $\varphi$ or on the boundedness of some sublevel sets.

\begin{proposition}
Let $X$ be a normed space, and let there exist $\lambda_0 \ge 0$ such that for any $\lambda \ge \lambda_0$ the
penalty function $F_{\lambda}$ attains a global minimum on the set $A$. Suppose that the set $\Omega$ is bounded and
there exist $\delta > 0$, and a strictly increasing function $\eta \colon \mathbb{R}_+ \to \mathbb{R}_+$
such that $\eta(0) = 0$ and
$$
  \varphi(x) \ge \eta( d(x, \Omega) ) \quad \forall x \in \Omega_{\delta} = \{ x \in A \mid \varphi(x) < \delta \}.
$$
Then $F_{\lambda}$ is non-degenerate.
\end{proposition}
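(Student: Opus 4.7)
The plan is to combine Proposition~\ref{PrpConvToZero} with the local error bound from the hypothesis. Proposition~\ref{PrpConvToZero} forces the penalty term to vanish along any selection of $G(\cdot)$; the error bound then transfers this information to the distance from $\Omega$; and since $\Omega$ is bounded, this is enough to keep the selection bounded.

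In more detail, I would first fix an arbitrary selection $x(\cdot)$ of $G(\cdot)$ on $[\lambda_0, +\infty)$, which exists by the assumption $[\lambda_0, +\infty) \subset \dom G$. By Proposition~\ref{PrpConvToZero}, $\varphi(x(\lambda)) \to 0$ as $\lambda \to +\infty$, so there exists $\lambda_1 \ge \lambda_0$ such that $\varphi(x(\lambda)) < \delta$, i.e. $x(\lambda) \in \Omega_{\delta}$, for every $\lambda \ge \lambda_1$. Applying the hypothesised inequality $\varphi(x) \ge \eta(d(x, \Omega))$ on $\Omega_{\delta}$ gives $\eta(d(x(\lambda), \Omega)) \le \varphi(x(\lambda))$, hence $\eta(d(x(\lambda), \Omega)) \to 0$.

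The only mildly delicate step is to deduce $d(x(\lambda), \Omega) \to 0$ from $\eta(d(x(\lambda), \Omega)) \to 0$, since no continuity of $\eta$ is assumed. Strict monotonicity together with $\eta(0) = 0$ yields $\eta(t) > 0$ for all $t > 0$, so a contradiction-by-subsequence argument suffices: if $d(x(\lambda_n), \Omega) \ge \varepsilon$ along some sequence $\lambda_n \to +\infty$, then $\eta(d(x(\lambda_n), \Omega)) \ge \eta(\varepsilon) > 0$, contradicting convergence to zero. I expect this short contradiction step to be the ``hardest'' part of the argument, although it is hardly difficult.

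Finally, boundedness of $\Omega$ closes the loop. Writing $R = \sup_{y \in \Omega} \|y\| < +\infty$ and choosing for each large enough $\lambda$ a point $y_\lambda \in \Omega$ with $\|x(\lambda) - y_\lambda\| \le d(x(\lambda), \Omega) + 1$, the triangle inequality gives $\|x(\lambda)\| \le R + d(x(\lambda), \Omega) + 1$, which is bounded for all $\lambda$ sufficiently large. Hence there exists $\lambda_2 \ge \lambda_1$ such that $\{ x(\lambda) \mid \lambda \ge \lambda_2 \}$ is bounded, and taking $\lambda_2$ as the threshold in the definition of non-degeneracy completes the proof. The whole argument is essentially a bookkeeping application of Proposition~\ref{PrpConvToZero} together with the hypothesised error bound, so no serious obstacle is expected.
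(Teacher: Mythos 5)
Your proof is correct and follows essentially the same route as the paper: apply Proposition~\ref{PrpConvToZero} to a selection of $G(\cdot)$, feed the resulting smallness of $\varphi(x(\lambda))$ into the error bound, use strict monotonicity of $\eta$ to control $d(x(\lambda),\Omega)$, and conclude via the boundedness of $\Omega$. The only cosmetic difference is that the paper avoids your subsequence argument by demanding $\varphi(x(\lambda)) < \min\{\delta, \eta(1)\}$ outright, which gives $d(x(\lambda),\Omega) < 1$ directly.
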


\begin{proof}
By virtue of Proposition~\ref{PrpConvToZero}, for any selection $x(\cdot)$ of the set-valued mapping $G(\cdot)$, one has
$\varphi(x(\lambda)) \to 0$ as $\lambda \to \infty$. Hence there exists $\lambda_1 \ge \lambda_0$ such that
$$
  \eta( d(x(\lambda), \Omega) ) \le \varphi(x(\lambda)) < \min\{ \delta, \eta(1) \} \quad
  \forall \lambda \ge \lambda_1.
$$
Consequently, $d(x(\lambda), \Omega) < 1$ for all $\lambda > \lambda_1$ by virtue of the fact that $\eta$ is a strictly
increasing function. Therefore taking into account the boundedness of $\Omega$ one obtains that
$$
  \| x(\lambda) \| \le  \sup_{x \in \Omega} \| x \| + 1 < + \infty \quad \forall \lambda \ge \lambda_1.
$$
Thus, $F_{\lambda}$ is non-degenerate.  
\end{proof}

Recall that a function $g \colon X \to \mathbb{R} \cup \{ + \infty \}$ defined on a normed space $X$ is called
\textit{coercive} with respect to the set $A$, if $g(x) \to \infty$ as $\| x \| \to \infty$ such that $x \in A$. It is
easy to check that the following proposition holds true.

\begin{proposition}
Let $X$ be a normed space, and let there exist $\lambda_0 \ge 0$ such that for any $\lambda \ge \lambda_0$ the
penalty function $F_{\lambda}$ attains a global minimum on the set $A$. Suppose that one of the following conditions
is satisfied:
\begin{enumerate}
\item{the set $\{ x \in A \mid F_{\mu}(x) < f^* \}$ is bounded for some $\mu \ge 0$;}

\item{the set $\Omega_{\delta} = \{ x \in A \mid \varphi(x) < \delta \}$ is bounded for some $\delta > 0$ }
\end{enumerate}
(in particular, one can suppose that one of the functions $f$, $\varphi$ or $F_{\mu}$ for some $\mu > 0$ is coercive
with respect to the set $A$). Then $F_{\lambda}$ is non-degenerate.
\end{proposition}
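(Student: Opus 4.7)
The plan is to handle the two conditions separately and show, in each case, that some selection $x(\cdot)$ of $G(\cdot)$ ends up in the posited bounded set for all sufficiently large $\lambda$.

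For condition~2, the argument is essentially a one-liner using Proposition~\ref{PrpConvToZero}. Given any selection $x(\cdot)$ of $G(\cdot)$ on $[\lambda_0, +\infty)$, that proposition guarantees $\varphi(x(\lambda)) \to 0$ as $\lambda \to \infty$. Hence there exists $\lambda_1 \ge \lambda_0$ such that $\varphi(x(\lambda)) < \delta$, i.e. $x(\lambda) \in \Omega_{\delta}$, for every $\lambda \ge \lambda_1$. Since $\Omega_{\delta}$ is bounded by assumption, $\{x(\lambda) \mid \lambda \ge \lambda_1\}$ is bounded, and $F_{\lambda}$ is non-degenerate.

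For condition~1, the key observation is the following chain of inequalities for any $\lambda \ge \max\{\lambda_0, \mu\}$ and any $x_0(\lambda) \in G(\lambda)$: picking any $y \in \Omega$ with $f(y) = f^*$ (which exists by the standing global solvability hypothesis of the section), one has
$$
  F_{\mu}(x_0(\lambda)) \le F_{\lambda}(x_0(\lambda)) \le F_{\lambda}(y) = f(y) = f^*,
$$
since $\varphi(x_0(\lambda)) \ge 0$, $\lambda \ge \mu$, $x_0(\lambda)$ minimizes $F_{\lambda}$ on $A$, and $\varphi(y) = 0$. So either $F_{\mu}(x_0(\lambda)) < f^*$, placing $x_0(\lambda)$ in the bounded set of condition~1, or $F_{\mu}(x_0(\lambda)) = f^*$, in which case all inequalities above are equalities and therefore $\varphi(x_0(\lambda)) = 0$ and $f(x_0(\lambda)) = f^*$, so $x_0(\lambda)$ is itself a globally optimal solution of $(\mathcal{P})$. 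In the latter case one can \emph{replace} $x_0(\lambda)$ by a fixed globally optimal solution $x^*$ (which still belongs to $G(\lambda)$, since $F_{\lambda}(x^*) = f^* = \min_{x \in A} F_{\lambda}(x)$), thereby defining a new selection $x(\cdot)$ whose range is contained in the union of $\{x \in A \mid F_{\mu}(x) < f^*\}$ and $\{x^*\}$, hence bounded.

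The only subtle step is this selection surgery: one has to realize that exactness of the bound in the displayed chain forces feasibility and optimality of $x_0(\lambda)$, which in turn legitimizes swapping in $x^*$ without leaving $G(\lambda)$. Everything else (including the parenthetical ``in particular'' assertion) is automatic: if $f$, $\varphi$, or $F_{\mu}$ is coercive with respect to $A$, then $\{f < f^*\}$, $\{\varphi < \delta\}$, or $\{F_{\mu} < f^*\}$ respectively is bounded, reducing to one of the two listed conditions. I expect no genuine obstacles; the proof is essentially bookkeeping around Proposition~\ref{PrpConvToZero} and the monotonicity of $F_{\lambda}$ in $\lambda$.
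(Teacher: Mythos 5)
Your argument is correct: the paper itself leaves this proposition unproved (``it is easy to check''), and your proof is exactly the intended bookkeeping --- Proposition~\ref{PrpConvToZero} places any selection eventually in $\Omega_{\delta}$ under condition~2, while under condition~1 the chain $F_{\mu}(x_0(\lambda)) \le F_{\lambda}(x_0(\lambda)) \le F_{\lambda}(y) = f^*$ together with the selection surgery (swapping in a fixed globally optimal $x^*$, which lies in $G(\lambda)$ whenever $\min_{A} F_{\lambda} = f^*$) yields a bounded selection, after enlarging $\lambda_0$ to $\max\{\lambda_0, \mu\}$, which the definition of non-degeneracy permits. The only cosmetic blemish is the claim that equality forces $\varphi(x_0(\lambda)) = 0$, which fails when $\lambda = \mu$; but since your replacement step is justified directly from $F_{\lambda}(x^*) = f^* = \min_{x \in A} F_{\lambda}(x)$ and not from feasibility of $x_0(\lambda)$, this does not affect the validity of the proof.
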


\subsection{Exact Penalization in the Finite Dimensional Case}

In the finite dimensional case, the non-degeneracy of the penalty function along with the exactness of $F_{\lambda}$ at
every globally optimal solution of the initial problem is necessary and sufficient for $F_{\lambda}$ to be exact. The
proof of this results, in essence, repeats the proof of Theorem~\ref{ThExPenFuncCompactSet}.

\begin{theorem} \label{ThExPenFiniteDim}
Let $X$ be a finite dimensional normed space, $A \subset X$ be a closed set, and let the functions $f$ and
$\varphi$ be l.s.c. on $A$. Then the penalty function $F_{\lambda}$ is exact if and only if
the following two conditions are satisfied:
\begin{enumerate}
\item{$F_{\lambda}$ is non-degenerate;}

\item{$F_{\lambda}$ is exact at every globally optimal solution $x^*$ of the problem $(\mathcal{P})$.}
\end{enumerate}
\end{theorem}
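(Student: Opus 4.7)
The proof will have two parts, and it will closely parallel the proof of Theorem~\ref{ThExPenFuncCompactSet}, with the non-degeneracy hypothesis playing the role that compactness of $A$ played there.

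For necessity, suppose $F_\lambda$ is exact. Fix any $\lambda_0 > \lambda^*(f, \varphi)$. By definition of the least exact penalty parameter (and Corollary~\ref{CrlrDefExPenFunc}), for every $\lambda \ge \lambda_0$ we have $G(\lambda) = \argmin_{x \in \Omega} f(x)$, which is nonempty by the standing assumption of the section. In particular, every globally optimal solution $x^*$ of $(\mathcal{P})$ belongs to $G(\lambda)$ for all $\lambda \ge \lambda_0$, so it is a (global and hence local) minimizer of $F_\lambda$ on $A$, giving condition~(2) with $\lambda^*(x^*) \le \lambda^*(f, \varphi)$. Taking the constant selection $x(\lambda) \equiv x^*$ yields a bounded selection of $G$ on $[\lambda_0, \infty)$, giving condition~(1).

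For sufficiency, I would argue as follows. By non-degeneracy, fix $\lambda_0 \ge 0$ and a selection $x(\cdot)$ of $G(\cdot)$ such that $\{ x(\lambda) : \lambda \ge \lambda_0 \}$ is bounded. Pick any sequence $\lambda_n \ge \lambda_0$ with $\lambda_n \to \infty$. Since $X$ is finite dimensional, Bolzano--Weierstrass yields a subsequence $x(\lambda_{n_k}) \to x^*$, and $x^* \in A$ since $A$ is closed. By Proposition~\ref{PrpConvToZero}, $\varphi(x(\lambda_{n_k})) \to 0$; combined with lower semicontinuity and non-negativity of $\varphi$, this forces $\varphi(x^*) = 0$, so $x^* \in \Omega$. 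Next, for every $y \in \Omega$ with $f(y) < +\infty$ one has $f(x(\lambda_{n_k})) \le F_{\lambda_{n_k}}(x(\lambda_{n_k})) \le F_{\lambda_{n_k}}(y) = f(y)$, whence $f(x(\lambda_{n_k})) \le f^*$, and by lower semicontinuity of $f$, $f(x^*) \le f^*$. Since $x^* \in \Omega$, $f(x^*) = f^*$, so $x^*$ is a globally optimal solution of $(\mathcal{P})$.

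Now I invoke condition~(2): $F_\lambda$ is exact at $x^*$. Fixing any $\lambda_1 > \lambda^*(x^*)$, Proposition~\ref{PrpUniformLocalPenal} gives a neighbourhood $U$ of $x^*$ such that $F_\lambda(x) \ge F_\lambda(x^*)$ for all $x \in U \cap A$ and all $\lambda \ge \lambda_1$. For $k$ sufficiently large, $x(\lambda_{n_k}) \in U$ and $\lambda_{n_k} \ge \lambda_1$, so $F_{\lambda_{n_k}}(x^*) \le F_{\lambda_{n_k}}(x(\lambda_{n_k})) = \inf_{x \in A} F_{\lambda_{n_k}}(x)$; since $x^* \in A$, equality holds. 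Hence $\inf_{x \in A} F_{\lambda_{n_k}}(x) = f(x^*) = f^*$, and Corollary~\ref{CrlrDefExPenFunc} concludes that $F_\lambda$ is exact.

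The main obstacle is exactly the step that required compactness of $A$ in Theorem~\ref{ThExPenFuncCompactSet}: producing a cluster point of the minimizers $x(\lambda)$ as $\lambda \to \infty$ and showing this cluster point is a globally optimal solution of $(\mathcal{P})$. Here non-degeneracy furnishes the bounded selection, finite dimensionality promotes boundedness to relative compactness, and Proposition~\ref{PrpConvToZero} together with lower semicontinuity does the rest. The transition from local exactness at $x^*$ to global exactness is then purely mechanical, via the uniform neighbourhood provided by Proposition~\ref{PrpUniformLocalPenal}, so nets are no longer required and sequences suffice throughout.
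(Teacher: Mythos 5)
Your proposal is correct and follows essentially the same route as the paper: necessity via the constant selection at a globally optimal solution, and sufficiency by extracting a convergent subsequence of global minimizers (non-degeneracy plus finite dimensionality replacing compactness of $A$), using Proposition~\ref{PrpConvToZero} with lower semicontinuity to identify the limit as a globally optimal solution, and then Proposition~\ref{PrpUniformLocalPenal} with Corollary~\ref{CrlrDefExPenFunc} to pass from local to global exactness. The paper's proof is the same argument, down to the remark that it repeats the proof of Theorem~\ref{ThExPenFuncCompactSet} with sequences in place of nets.
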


\begin{proof}
Necessity. From the definition of global exactness it is obviously follows that the penalty function $F_{\lambda}$ is
exact at every globally optimal solution $x^*$ of the problem $(\mathcal{P})$.

Fix a selection $x(\cdot)$ of $G(\cdot)$, and a globally optimal solution $x^*$ of the problem
$(\mathcal{P})$. From the exactness of the penalty function it follows that for any $\lambda \ge \lambda^*(f, \varphi)$
the point $x^*$ is a global minimizer of $F_{\lambda}$ on $A$. Define
$$
  x_0(\lambda) = \begin{cases}
    x(\lambda), \text{ if } \lambda \in [0, \lambda^*(f, \varphi)) \cap \dom G, \\
    x^*, \text{ if } \lambda \ge \lambda^*(f, \varphi).
  \end{cases}
$$
Then $x_0(\cdot)$ is a selection of $G(\cdot)$, and the set $\{ x_0(\lambda) \mid \lambda \ge \lambda_0 \}$ with
$\lambda_0 = \lambda^*(f, \varphi)$ is bounded. Hence $F_{\lambda}$ is non-degenerate.

Sufficiency. By the definition of non-degeneracy there exist $\lambda_0 \ge 0$ and a selection $x(\cdot)$ of
the set-valued mapping $G(\cdot)$ such that the set $\{ x(\lambda) \mid \lambda \ge \lambda_0 \}$ is bounded. Choose an
increasing sequence $\{ \lambda_n \}$ such that $\lambda_n \to \infty$ as $n \to \infty$ and $\lambda_n \ge \lambda_0$
for all $n$. The corresponding sequence of global minimizers $\{ x(\lambda_n) \} \subset A$ is bounded. Then by virtue
of the fact that the normed space $X$ is finite dimensional there exists a subsequence of the sequence 
$\{ x(\lambda_n) \}$ converging to some $x^*$. Without loss of generality one can suppose that the sequence 
$\{ x(\lambda_n) \}$ itself converges to $x^*$. Moreover, $x^* \in A$ due to the fact that the set $A$ is closed.

By Proposition~\ref{PrpConvToZero} one has $\varphi(x(\lambda_n)) \to 0$ as $n \to \infty$. Therefore
$\varphi(x^*) = 0$, since $\varphi$ is l.s.c.. Hence $x^* \in M$, which implies $x^* \in \Omega = M \cap A$.

Let us check that $x^*$ is a globally optimal solution of the problem $(\mathcal{P})$. Indeed, from the facts that
$x(\lambda_n)$ is a point of global minimum of $F_{\lambda_n}$ on $A$, and the function $\varphi$ is nonnegative it
follows that $f(x(\lambda_n)) \le f^*$ for all $n \in \mathbb{N}$. Therefore taking into account the fact that $f$ is
l.s.c. one gets that
$$
  f(x^*) \le \lim_{n \to \infty} f(x(\lambda_n)) \le f^*.
$$
Hence $x^*$ is a globally optimal solution of the problem $(\mathcal{P})$. Consequently, 
$F_{\lambda}$ is exact at $x^*$.

Fix an arbitrary $\mu > \lambda^*(x^*)$. Then by Proposition~\ref{PrpUniformLocalPenal} there exists $r > 0$ such
that for any $\lambda \ge \mu$ one has
$$
  F_{\lambda}(x^*) \le F_{\lambda}(x) \quad \forall x \in B(x^*, r) \cap A.
$$
Applying the fact that the sequence $\{ x(\lambda_n) \}$ converges to $x^*$ one gets that there exists 
$n_0  \in \mathbb{N}$ such that $x(\lambda_n) \in B(x^*, r)$ for any $n \ge n_0$. Therefore
$$
  F_{\lambda}(x^*) \le F_{\lambda}(x(\lambda_n)) \quad \forall n \ge n_0 \quad \forall \lambda \ge \mu.
$$
Since $\lambda_n \to \infty$ as $n \to \infty$, there exists $n_1 \in \mathbb{N}$ such that for any $n \ge n_1$ one has
$\lambda_n \ge \mu$. Hence
$$
  F_{\lambda_n}(x^*) \le F_{\lambda_n}(x(\lambda_n)) \quad \forall n \ge \max\{ n_0, n_1 \},
$$
which implies that $x^* \in \Omega$ is a point of global minimum of the penalty function $F_{\lambda_n}$ for any
sufficiently large $n$. Consequently, $F_{\lambda}$ is an exact penalty function by Corollary~\ref{CrlrDefExPenFunc}.  
\end{proof}

\begin{remark}
One can verify that the theorem above holds true in the case when $X$ is a Montel space or a metric space such that any
subset $K$ of $X$ having finite diameter is relatively compact. Moreover, one can easily extend the previous theorem
to the general case when $X$ is a topological space. Namely, one must replace the non-degeneracy assumption of 
the theorem with the assumptions that there exists $\lambda_0 \ge 0$ such that $F_{\lambda}$ attains a global minimum 
on $A$ for any $\lambda \ge \lambda_0$, and there exists a selection $x(\cdot)$ of the multifunction $G(\cdot)$ such
that the set $\{ x(\lambda) \mid \lambda \ge \lambda_0 \}$ is relatively compact. However, this result is of theoretical
value only, since, unlike the non-degeneracy condition in the finite dimensional case, the assumption that the set 
$\{ x(\lambda) \mid \lambda \ge \lambda_0 \}$ is relatively compact is extremely hard to verify.
\end{remark}

\subsection{Exact Penalization in Normed Spaces}

Our aim, now, is to show that Theorem~\ref{ThExPenFiniteDim} holds true only in the finite dimensional case, and to
understand additional assumptions that must be made in order to obtain a similar result in the case when $X$ is an
infinite dimensional normed space.

Recall that $\ell_2$ is a linear space of all sequences $x = \{ x_n \}_{n \in \mathbb{N}} \subset \mathbb{R}$ such that
$\sum_{n = 1}^{\infty} |x_n|^2 < +\infty$ endowed with the norm
$$
  \| x \| = \Big( \sum_{n = 1}^{\infty} |x_n|^2 \Big)^{\frac12}.
$$
In the examples below, $X = A = \ell_2$.

\begin{example} \label{ExNotStrongReg}
Let $\Omega = \{ 0 \}$. Set
$$
  \varphi(x) = \sum_{n = 1}^{\infty} \frac{1}{n^2} |x_n| \quad \forall x \in \ell_2.
$$
The function $\varphi$ is non-negative, continuous, positively homogeneous, convex and such that $\varphi(x) = 0$
iff $x = 0$. 

For any $n \in \mathbb{N}$ define a function $s_n \colon \ell_2 \to \mathbb{R}$ such that $s_n(x) = \min\{ x_n, 2 \}$,
if
$x_n \ge 1$, and $s_n(x) = 0$ otherwise. It is easy to see that $s_n$ is upper semicontinuous. Define
$$
  f(x) = \min_{n \in \mathbb{N}} \left( - \frac{1}{n} s_n(x) \right)
$$
(the minimum on the right-hand side is attained, since for any $x \in \ell_2$ there exists $n_0 \in \mathbb{N}$ such
that $s_n(x) = 0$ for all $n \ge n_0$). One can verify that $f$ is l.s.c. on $\ell_2$. Note that $f(x) = 0$ for all $x$
in the open ball $U(0, 1)$. Hence, obviously, $\overline{\lambda}(0) = 0$, and the penalty function $F_{\lambda}$ is
exact at the origin.

Fix an arbitrary $m \in \mathbb{N}$. Observe that for any $\lambda \in [m - 1, m)$ and $x \in \ell_2$ one has
$$
  - \frac1n s_n(x) + \frac{\lambda}{n^2} |x_n| \ge 0 \quad \forall n < m, 
$$
and
$$
  \left(\frac{\lambda}{n^2} - \frac1n \right) < 0, \quad
  - \frac1n s_n(x) + \frac{\lambda}{n^2} |x_n| \ge \left( \frac{\lambda}{n^2} - \frac1n \right)2 
  \quad \forall n \ge m.
$$
Moreover, the latter inequality turns into an equality for any $x \in \ell_2$ such that $x_n = 2$. Therefore
$$
  \inf_{x \in \ell_2} F_{\lambda}(x) = 
  \min_{n \in \mathbb{N}} \min_{x \in \ell_2} \left( - \frac{1}{n} s_n(x) + \lambda \varphi(x) \right) =
  \min_{n \in \mathbb{N}} \left( \frac{2 \lambda}{n^2} - \frac2n \right) < 0 \quad 
  \forall \lambda > 0,
$$
and the infimum on the left-hand side is attained at a point $x_{\lambda}$, where $(x_{\lambda})_{\overline{n}} = 2$,
$(x_{\lambda})_n = 0$, if $n \ne \overline{n}$, and $\overline{n} \in \mathbb{N}$ is such that
$$
  \min_{n \in \mathbb{N}} \left( \frac{2 \lambda}{n^2} - \frac2n \right) = 
  \frac{2 \lambda}{\overline{n}^2} - \frac{2}{\overline{n}}.
$$
Note that $\| x_{\lambda} \| = 2$. Thus, $F_{\lambda}$ is non-degenerate, but it is not an exact penalty function.
\end{example}

In the previous example, one has $d(x_{\lambda}, \Omega) = 2$ for all $x_{\lambda} \in G(\lambda)$ and for any
$\lambda > 0$. This example motivates us to give the following definition of strong non-degeneracy.

\begin{definition}
Let $X$ be a normed space. The penalty function $F_{\lambda}$ is referred to as \textit{strongly non-degenerate}, if
there exists a selection $x(\cdot)$ of the mapping $G(\cdot)$ such that the set
$\{ x(\lambda) \mid \lambda \ge \lambda_0 \}$ is bounded for some $\lambda_0 \ge 0$, and
$\liminf_{\lambda \to \infty} d(x(\lambda), \Omega) = 0$.
\end{definition}

As the following example shows, even the strong non-degeneracy of the penalty function $F_{\lambda}$ along with its
exactness at every globally optimal solution of the initial problem is not sufficient for $F_{\lambda}$ to be
exact.

\begin{example} \label{ExUnbLocalExParam}
Let $\Omega = B(0, 1)$, and
$$
  \varphi(x) = \max\{ \| x \| - 1, 0 \}.
$$
Note that $\varphi$ is a continuous convex function and $\varphi(x) = d(x, \Omega)$.

For any $n \in \mathbb{N}$ define $s_n(x) = \min\{ 0, \max\{ -n(x_n - 1), - 1 / n \} \}$. It is clear that $s_n$
is Lipschitz continuous on $\ell_2$. Note also that $s_n(x) = 0$ iff $x_n \le 1$. Set
$$
  f(x) = \min_{n \in \mathbb{N}} s_n(x) \quad \forall x \in \ell_2.
$$
It is easy to verify that the function $f$ is locally Lipschitz continuous.

For all $x \in \Omega$ one has $f(x) = 0$. Thus, every $x \in \Omega$ is a point of global minimum of $f$ on $\Omega$. 
If $x \in \Omega$ and $x_n < 1$ for all $n \in \mathbb{N}$, then $f(\cdot) = 0$ in a sufficiently small neighbourhood of
$x$. Hence, for any such $x \in \Omega$ one has $\overline{\lambda}(x) = 0$. On the other hand, if $x \in \Omega$ and
for some $n \in \mathbb{N}$ one has $x_n = 1$, then $x_m = 0$ for any $m \ne n$ and $f(\cdot) = s_n(\cdot)$ in a
neighbourhood of $x$. Therefore, as it is easy to check, $\overline{\lambda}(x) = n$ for any such $x \in \Omega$. Thus,
the penalty function $F_{\lambda} = f + \lambda \varphi$ is exact at every globally optimal solution of 
the problem $(\mathcal{P})$.

Let us find a global minimum of $F_{\lambda}$. For any $n \in \mathbb{N}$ and $\lambda \in (0, n)$ one has
$$
  \min_{x \in \ell_2} (s_n(x) + \lambda \varphi(x)) = -\frac1n + \frac{\lambda}{n^2},
$$
and the minimum on the left-hand side is attained at the unique point $x^{(n)}$, where $x_n^{(n)} = 1 + 1 / n^2$ and 
$x^{(n)}_m = 0$ for any $m \ne n$. Note also that $s_n(x) + \lambda \varphi(x) \ge 0$ for any $x \in \ell_2$, if 
$\lambda \ge n$. Consequently, for any $\lambda > 0$ one has
$$
  \min_{x \in \ell_2} F_{\lambda}(x) = \min_{n \in \mathbb{N}} \min_{x \in \ell_2} (s_n(x) + \lambda \varphi(x)) =
  \min_{n \in \mathbb{N}, n \ge \lambda} \left( -\frac1n + \frac{\lambda}{n^2} \right) < 0,
$$
and the minimum on the left-hand side is attained at the point $x_{\lambda} = x^{(\overline{n})}$ for some 
$\overline{n} \in \mathbb{N}$ with $\overline{n} \ge \lambda$. Hence for any $\lambda \ge 0$ there exists 
$x(\lambda) \in G(\lambda)$ such that
$$
  \| x(\lambda) \| \le 1 + \frac{1}{\lambda^2}, \quad d(x(\lambda), \Omega) \le \frac{1}{\lambda^2} \quad
  \forall \lambda > 0.
$$
Thus, $F_{\lambda}$ is strongly non-degenerate, but it is not exact.
\end{example}

In the previous example one has $\sup_{x^* \in \Omega} \overline{\lambda}(x^*) = +\infty$, i.e. the least exact penalty
parameters at globally optimal solutions of the problem $(\mathcal{P})$ are unbounded from above. However, the
boundedness from above of ``local'' exact penalty parameters is also insufficient for the exactness of a penalty
function.

\begin{example} \label{ExNotLipContNeighb}
Let $\Omega$ and $\varphi$ be as in the previous example. For any $n \in \mathbb{N}$ and $x \in \ell_2$ define
$$
  s_n(x) = \begin{cases}
    \max\{ 0, (2x_n - 1) / n \}, \text{ if } x_n \le 1, \\
    \max\{ -n(x_n - 1) + 1 / n, -1/n \}, \text{ if } x_n \ge 1
  \end{cases}
$$
Note that $s_n$ is Lipschitz continuous on $\ell_2$, $s_n(x) = 1 / n$, if $x_n = 1$, and $s_n(x) = 0$ for any 
$x \in \ell_2$ such that $x_n \le 1 / 2$. Set
$$
  f(x) = \min_{n \in \mathbb{N}} s_n(x) \quad \forall x \in \ell_2.
$$
One can verify that $f$ is locally Lipschitz continuous.

It is easy to check that $x^* \in \Omega$ is a point of global minimum of $f$ on $\Omega$ iff $x^*_n \le 1/2$ for any 
$n \in \mathbb{N}$. Moreover, for any such $x^*$ one has $\overline{\lambda}(x^*) = 0$, since $x^*$ is also a point of
local minimum of the function $f$. Thus, the least exact penalty parameters at globally optimal solutions of the problem
$(\mathcal{P})$ are bounded from above

Arguing in the same way as in Example~\ref{ExUnbLocalExParam} one can easily show that
$$
  \min_{x \in \ell_2} F_{\lambda}(x) = 
  \min_{n \in \mathbb{N}, n \ge 2 \lambda} \left( -\frac1n + \frac{2 \lambda}{n^2} \right) < 0,
$$
and the minimum on the left-hand side is attained at the point $x_{\lambda}$, where 
$(x_{\lambda})_{\overline{n}} = 1 + 2 / \overline{n}^2$, $(x_{\lambda})_m = 0$, if $m \ne \overline{n}$ for some 
$\overline{n} \in \mathbb{N}$ with $\overline{n} \ge 2 \lambda$. Thus, $F_{\lambda}$ is strongly non-degenerate, but
not exact.

Note that in this example, the function $f$ is not Lipschitz continuous in a neighbourhood of the set $\Omega$ of the
form $\{ x \in \ell_2 \mid d(x, \Omega) < r \}$ for any $r > 0$.
\end{example}

\begin{remark}
Combining the ideas of Examples~\ref{ExNotStrongReg}--\ref{ExNotLipContNeighb} one can give an example of the penalty
function that is not exact due to the fact that the uniform lower estimate $\varphi(x) \ge a d(x, \Omega)$ in a
neighbourhood of $\Omega$ does not hold true for any $a > 0$.
\end{remark}

The examples discussed above allow us to formulate necessary and sufficient conditions for a penalty
function to be exact in the case when $X$ is an infinite dimensional normed space. 

\begin{theorem} \label{ThExPenNormedSpace}
Let $X$ be a normed space. Then the penalty function $F_{\lambda}$ is exact if and only if the following two
conditions are satisfied:
\begin{enumerate}
\item{$F_{\lambda}$ is strongly non-degenerate;}

\item{for any $R > 0$ there exist $\lambda^* \ge 0$ and $r > 0$ such that
$$
  \frac{f^* - f(y)}{\varphi(y)} \le \lambda^* \quad 
  \forall y \in \big\{ x \in (A \cap B(0, R)) \setminus \Omega \bigm| d( x, \Omega) < r \big\},
$$
}
where, as above, $f^* = \inf_{x \in \Omega} f(x)$.
\end{enumerate}
\end{theorem}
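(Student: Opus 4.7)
The plan is to prove the theorem as a two-sided argument, with necessity being a direct consequence of the global exactness results already in the excerpt and sufficiency parallelling the argument of Theorem~\ref{ThExPenFiniteDim}, but with the \emph{strong} non-degeneracy doing the work that local compactness did in the finite-dimensional case.

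For necessity, suppose $F_\lambda$ is exact with least exact penalty parameter $\lambda^*(f,\varphi)$. By Corollary~\ref{CrlrDefExPenFunc}, for every $\lambda > \lambda^*(f,\varphi)$ one has $G(\lambda) = \argmin_{x\in\Omega} f(x) \subset \Omega$, so if I fix any globally optimal solution $x^*$ of $(\mathcal{P})$ and define the selection by $x(\lambda) = x^*$ for $\lambda > \lambda^*(f,\varphi)$ and pick anything in $G(\lambda)$ otherwise, the image $\{x(\lambda) \mid \lambda > \lambda^*(f,\varphi)\}$ is the singleton $\{x^*\}$ and $d(x(\lambda),\Omega) = 0$; this gives strong non-degeneracy. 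Proposition~\ref{PrpEquivDefOfExactPenFunc} further yields $\sup_{x \in A\setminus \Omega}(f^*-f(x))/\varphi(x) = \lambda^*(f,\varphi) < +\infty$, which makes condition 2 hold globally (taking $\lambda^* = \lambda^*(f,\varphi)$ independently of $R$ and $r$).

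For sufficiency, let $x(\cdot)$ be the selection of $G(\cdot)$ from strong non-degeneracy, with $\|x(\lambda)\| \le R$ for all $\lambda \ge \lambda_0$. Apply hypothesis 2 with this specific $R$ to obtain $\lambda^* \ge 0$ and $r > 0$. The clause $\liminf_{\lambda \to \infty} d(x(\lambda),\Omega) = 0$ supplies a sequence $\lambda_n \to \infty$ with $d(x(\lambda_n),\Omega) \to 0$, and by passing to a tail I may assume $\lambda_n > \lambda^*$ and $d(x(\lambda_n),\Omega) < r$ for all $n$. Now I split into two cases. If $x(\lambda_n) \in \Omega$, then $\varphi(x(\lambda_n)) = 0$ so $F_{\lambda_n}(x(\lambda_n)) = f(x(\lambda_n)) \ge f^*$. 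If $x(\lambda_n) \notin \Omega$, then $x(\lambda_n)$ lies in the set appearing in hypothesis 2, giving $f^* - f(x(\lambda_n)) \le \lambda^*\varphi(x(\lambda_n))$, whence
$$
F_{\lambda_n}(x(\lambda_n)) = f(x(\lambda_n)) + \lambda_n \varphi(x(\lambda_n)) \ge f^* + (\lambda_n - \lambda^*)\varphi(x(\lambda_n)) \ge f^*.
$$
Since $x(\lambda_n)$ realizes the infimum of $F_{\lambda_n}$ on $A$, one gets $\inf_{x\in A} F_{\lambda_n}(x) \ge f^*$; the reverse inequality $\inf_A F_{\lambda_n} \le \inf_\Omega F_{\lambda_n} = f^*$ is automatic, so $\inf_A F_{\lambda_n} = f^*$ for all sufficiently large $n$, and Corollary~\ref{CrlrDefExPenFunc} produces exactness of $F_\lambda$.

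I do not expect any serious obstacle: the argument is essentially a careful bookkeeping exercise. The one conceptual point that must be tracked is that, in the absence of compactness, neither boundedness of the minimizers alone (compare Example~\ref{ExNotStrongReg}) nor a mere pointwise bound on $\overline{\lambda}(x^*)$ at optimal solutions (compare Example~\ref{ExUnbLocalExParam}) suffices; one genuinely needs the minimizers to approach $\Omega$ along \emph{some} subsequence (the $\liminf$ clause) so that the local-near-$\Omega$-and-bounded hypothesis 2 can actually be invoked at the points $x(\lambda_n)$. This is precisely how the two hypotheses interlock, and making that interlocking explicit is the only delicate part of the write-up.
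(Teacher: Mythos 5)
Your proposal is correct and follows essentially the same route as the paper: necessity via Proposition~\ref{PrpEquivDefOfExactPenFunc} and the construction of a constant selection at a globally optimal solution, sufficiency by feeding the sequence $x(\lambda_n)$ supplied by strong non-degeneracy into hypothesis~2 and invoking Corollary~\ref{CrlrDefExPenFunc}. The only cosmetic difference is that you conclude via $\inf_A F_{\lambda_n}=f^*$ (making the case $x(\lambda_n)\in\Omega$ explicit) rather than comparing $F_{\lambda_n}(x^*)$ with $F_{\lambda_n}(x(\lambda_n))$ as the paper does, which is an equivalent bookkeeping choice.
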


\begin{proof}
Necessity. Suppose that $F_{\lambda}$ is an exact penalty function. Then by Proposition~\ref{PrpEquivDefOfExactPenFunc}
one has
$$
  \frac{f^* - f(y)}{\varphi(y)} \le \lambda^*(f, \varphi) \quad \forall y \in A \setminus \Omega.
$$
Hence condition 2 is satisfied. The fact that the first condition holds true follows directly from the definition
of exact penalty function.

Sufficiency. Let $x(\cdot)$ be a selection of the mapping $G(\cdot)$ such that the set 
$\{ x(\lambda) \mid \lambda \ge \lambda_0 \}$ is bounded for some $\lambda_0 \ge 0$, and 
$\liminf_{\lambda \to \infty} d(x(\lambda), \Omega) = 0$ that exists due to the strong
non-degeneracy of $F_{\lambda}$. Then there exist $R > 0$ and an increasing unbounded sequence $\{ \lambda_n \}$ such
that
$$
  d(x(\lambda_n), \Omega) \xrightarrow[n \to \infty]{} 0, \quad 
  x(\lambda_n) \in A \cap B(0, R) \quad \forall n \in \mathbb{N}.
$$
From condition 2 it follows that there exist $\lambda^* \ge 0$ and $r > 0$ such that for any $\lambda \ge \lambda^*$ one
has
$$
  f(x^*) = F_{\lambda}(x^*) \le F_{\lambda}(y) \quad 
  \forall y \in \big\{ x \in (A \cap B(0, R)) \setminus \Omega \bigm| d( x, \Omega) < r \big\},
$$
where $x^*$ is a globally optimal solution of the problem $(\mathcal{P})$. Consequently, for sufficiently
large $n \in \mathbb{N}$ one has
$$
  F_{\lambda_n}(x^*) \le F_{\lambda_n}(x(\lambda_n)).
$$
Taking into account the fact that $x(\lambda_n)$ is a point of global minimum of $F_{\lambda_n}$ on $A$ one gets that 
$x^* \in \Omega$ is a point of global minimum of $F_{\lambda_n}$ on $A$ for $n$ large enough. It remains to apply
corollary~\ref{CrlrDefExPenFunc}.  
\end{proof}

One can use a similar technique to the one proposed in subsection \ref{SubSecUpperEstim} in order to simplify 
verification of condition 2 in Theorem~\ref{ThExPenNormedSpace}. Namely, this technique utilizes a nonlocal error bound
for the penalty term $\varphi$.

\begin{proposition} \label{PrpExPenNearOmNormedCase}
Let $X$ be a normed space. Suppose that for any $R > 0$ there exist $r > 0$, and 
functions $\omega, \eta \colon \mathbb{R}_+ \to \mathbb{R}_+$ such that
\begin{enumerate}
\item{$\omega(0) = 0$, $\eta(0) = 0$ and $\eta(t) > 0$ for any $t > 0$;}

\item{$f(y) \ge f^* - \omega(d(y, \Omega))$ for any $y \in (A \cap B(0, R)) \setminus \Omega$ such that 
$d(x, \Omega) < r$;}

\item{$\varphi(x) \ge \eta(d(x, \Omega))$ for any $x \in A \cap B(0, R)$ such that $d(x, \Omega) < r$;}

\item{$\limsup_{t \to + 0} \frac{\omega(t)}{\eta(t)} =: \sigma < +\infty$.}
\end{enumerate}
Then for any $R > 0$ and $\varepsilon > 0$ there exists $\lambda^* < \sigma + \varepsilon$ and $r > 0$ such that
$$
  \frac{f^* - f(y)}{\varphi(y)} \le \lambda^* \quad 
  \forall y \in \big\{ x \in (A \cap B(0, R)) \setminus \Omega \bigm| d( x, \Omega) < r \big\}.
$$
\end{proposition}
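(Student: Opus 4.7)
The statement is the global analogue of Theorem~\ref{ThUpEstimPenPar}, and the proof essentially transcribes the local argument, with the neighbourhood of $x^*$ replaced by the intersection $A\cap B(0,R)$ together with a thin tube $\{d(\cdot,\Omega)<r\}$ around $\Omega$. I would fix $R>0$ and $\varepsilon>0$, apply the hypothesis to this $R$ to obtain the radius (call it $r_1>0$) and functions $\omega,\eta$ satisfying conditions 1--4, and then select the three parameters $\lambda^*$, $r$, and an auxiliary threshold $t_0$ in sequence.

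The key step is the extraction of $t_0$ from the $\limsup$ hypothesis: since $\limsup_{t\to+0}\omega(t)/\eta(t)=\sigma$, there exists $t_0>0$ such that
$$
  \frac{\omega(t)}{\eta(t)} < \sigma + \frac{\varepsilon}{2} \quad\text{for all } t\in(0,t_0).
$$
I would then set $\lambda^* := \sigma + \varepsilon/2$, which satisfies $\lambda^*<\sigma+\varepsilon$, and $r:=\min\{r_1,t_0\}$.

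The verification splits into two cases. Pick an arbitrary $y \in (A\cap B(0,R))\setminus\Omega$ with $d(y,\Omega)<r$. In the case $d(y,\Omega)>0$, condition 1 gives $\eta(d(y,\Omega))>0$, and conditions 2 and 3 combined with $d(y,\Omega)<t_0$ yield
$$
  \frac{f^* - f(y)}{\varphi(y)} \le \frac{\omega(d(y,\Omega))}{\eta(d(y,\Omega))} < \sigma + \frac{\varepsilon}{2} = \lambda^*,
$$
exactly as in inequality~(\ref{UpperEstim}) of Theorem~\ref{ThUpEstimPenPar}. In the case $d(y,\Omega)=0$, condition 2 together with $\omega(0)=0$ forces $f^*-f(y)\le 0$, and since $y\in A\setminus\Omega$ implies $y\notin M$ and hence $\varphi(y)>0$, the ratio is non-positive and the bound holds trivially.

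I do not expect a real obstacle; the only point requiring attention is the case $d(y,\Omega)=0$, which can occur when $\Omega$ is not closed, and which is handled by a separate argument just as in the proof of Theorem~\ref{ThUpEstimPenPar}. The rest is a routine substitution into the inequalities provided by the hypotheses.
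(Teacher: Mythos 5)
Your proof is correct, and it is exactly the argument the paper intends: the proposition is stated without proof, right after the remark that ``a similar technique to the one proposed in subsection~\ref{SubSecUpperEstim}'' applies, and your proof is precisely the argument of Theorem~\ref{ThUpEstimPenPar} transplanted to the tube $\{x \in A \cap B(0,R) \mid d(x,\Omega) < r\}$, including the correct handling of the case $d(y,\Omega) = 0$ via $\omega(0) = 0$ and $\varphi(y) > 0$ for $y \in A \setminus \Omega$.
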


\begin{proposition}
Let $X$ be a normed space. Suppose that for any bounded set $C \subset X$ there exists a continuous function 
$\omega \colon \mathbb{R}_+ \to \mathbb{R}_+$ such that
\begin{equation} \label{GenLipCond}
  |f(x) - f(y)| < \omega( \| x - y \| ) \quad \forall x, y \in C
\end{equation}
Then for any $R > 0$ and $r > 0$ there exists a function $\omega \colon \mathbb{R}_+ \to \mathbb{R}_+$ such that
$$
  f(y) \ge f^* - \omega(d(y, \Omega)) \quad 
  \forall y \in \{ z \in B(0, R) \setminus \Omega \mid d(z, \Omega) < r \}.
$$
\end{proposition}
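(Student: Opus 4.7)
The plan is to construct the desired $\omega$ from the local modulus of continuity on a sufficiently large ball, and then transfer the bound from $\Omega$ to nearby points by an approximation argument.

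First I would fix $R > 0$ and $r > 0$ and set $C = \overline{B(0, R + r)}$. This is bounded, so by hypothesis there is a continuous function $\omega_0 \colon \mathbb{R}_+ \to \mathbb{R}_+$ such that
$$
  |f(u) - f(v)| < \omega_0(\|u - v\|) \quad \forall u, v \in C.
$$
I would then take $y$ in the set $\{ z \in B(0, R) \setminus \Omega \mid d(z, \Omega) < r \}$ and choose, by the definition of the distance to $\Omega$, a sequence $\{ x_n \} \subset \Omega$ with $\|y - x_n\| \to d(y, \Omega)$. Since $d(y, \Omega) < r$, for all sufficiently large $n$ one has $\|y - x_n\| < r$, so $\|x_n\| \le \|y\| + \|y - x_n\| < R + r$; hence $x_n \in C$ (and of course $y \in C$ as well).

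Now I would apply the local modulus of continuity to the pair $(x_n, y)$ together with the trivial inequality $f(x_n) \ge f^*$ (valid because $x_n \in \Omega$) to obtain
$$
  f(y) > f(x_n) - \omega_0(\|y - x_n\|) \ge f^* - \omega_0(\|y - x_n\|)
$$
for all $n$ large enough. Letting $n \to \infty$ and using continuity of $\omega_0$ at $d(y, \Omega)$ yields
$$
  f(y) \ge f^* - \omega_0(d(y, \Omega)),
$$
so $\omega = \omega_0$ does the job.

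The only delicate point is keeping the approximating sequence $\{x_n\}$ inside the set $C$ on which the modulus of continuity is valid; this is handled by enlarging $C$ from $B(0,R)$ to $B(0, R+r)$, which absorbs the excursion of distance at most $r$ from $y$ to $\Omega$. Everything else is a routine passage to the limit using the continuity of $\omega_0$ at the single point $d(y, \Omega)$.
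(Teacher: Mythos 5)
Your proof is correct and follows essentially the same route as the paper's: apply the hypothesis on a bounded enlargement of the region (you use $\overline{B(0,R+r)}$, the paper uses the $r$-neighbourhood of the set itself, an immaterial difference), take a minimizing sequence in $\Omega$, combine $f(x_n)\ge f^*$ with the modulus-of-continuity bound, and pass to the limit using continuity of $\omega$.
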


\begin{proof}
Fix arbitrary $R > 0$ and $r > 0$, and denote $U = \{ z \in B(0, R) \setminus \Omega \mid d(z, \Omega) < r \}$.
Suppose that the set $U$ is not empty. Define the bounded set $C = \{ z \in X \mid d(z, U) \le r \}$.
By the assumption of the proposition, there exists a continuous function $\omega \colon \mathbb{R}_+ \to \mathbb{R}_+$
such that (\ref{GenLipCond}) holds true.

Choose an arbitrary $y \in U$. Then $d(y, \Omega) < r$. By definition there exists a sequence $\{ x_n \} \subset \Omega$
such that $\| y - x_n \| \to d(y, \Omega)$ as $n \to \infty$ and $\| y - x_n \| \le r$ for any $n \in \mathbb{N}$.
Therefore $\{ x_n \} \subset C$.

From the fact that $\{ x_n \} \subset \Omega$ it follows that $f(x_n) \ge f^*$ for any $n \in \mathbb{N}$. Hence taking
into account condition (\ref{GenLipCond}) one gets that for any $n \in \mathbb{N}$
$$
  f^* - f(y) = f^* - f(x_n) + f(x_n) - f(y) \le f(x_n) - f(y) \le \omega( \| x_n - y \| ).
$$
Passing to the limit as $n \to \infty$, and applying the continuity of the function $\omega$ one obtains the desired
result.  
\end{proof}

Let us mention one simple sufficient condition for $F_{\lambda}$ to be strongly non-degenerate that also relies on a
nonlocal error bound for the function $\varphi$.

\begin{proposition}
Let $X$ be a normed space, and the penalty function $F_{\lambda}$ be non-degenerate. Suppose that for any 
$R > 0$ there exist $\delta > 0$ and a strictly increasing function $\eta \colon \mathbb{R}_+ \to \mathbb{R}_+$ such
that $\eta(0) = 0$ and
\begin{equation} \label{ErrorBdEstimOmDelt}
  \varphi(x) \ge \eta(d(x, \Omega)) \quad \forall x \in \big\{ y \in A \cap B(0, R) \mid \varphi(x) < \delta  \big\}.
\end{equation}
Then $F_{\lambda}$ is strongly non-degenerate.
\end{proposition}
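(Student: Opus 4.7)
The plan is to combine the boundedness coming from non-degeneracy with the nonlocal error bound hypothesis and with Proposition~\ref{PrpConvToZero} on the decay of $\varphi$ along a selection of global minimizers. By the non-degeneracy of $F_{\lambda}$ I can pick a selection $x(\cdot)$ of $G(\cdot)$ and a $\lambda_0 \ge 0$ such that the set $\{x(\lambda) \mid \lambda \ge \lambda_0\}$ is bounded in $X$; choose $R > 0$ with $x(\lambda) \in A \cap B(0,R)$ for all $\lambda \ge \lambda_0$. This value of $R$ is what I will feed into the hypothesis of the proposition.

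Next, apply the assumption to this particular $R$ to obtain $\delta > 0$ and a strictly increasing function $\eta \colon \mathbb{R}_+ \to \mathbb{R}_+$ with $\eta(0) = 0$ such that the error bound $\varphi(x) \ge \eta(d(x, \Omega))$ holds on $\{ y \in A \cap B(0,R) \mid \varphi(y) < \delta \}$. Since $[\lambda_0, +\infty) \subset \dom G$, Proposition~\ref{PrpConvToZero} applied to the selection $x(\cdot)$ yields $\varphi(x(\lambda)) \to 0$ as $\lambda \to \infty$. In particular, there exists $\lambda_1 \ge \lambda_0$ such that $\varphi(x(\lambda)) < \delta$ for all $\lambda \ge \lambda_1$, so the error bound applies to $x(\lambda)$ and gives
$$
  \eta(d(x(\lambda), \Omega)) \le \varphi(x(\lambda)) \xrightarrow[\lambda \to \infty]{} 0.
$$

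It remains to deduce from $\eta(d(x(\lambda), \Omega)) \to 0$ that $d(x(\lambda), \Omega) \to 0$. This is where strict monotonicity of $\eta$, combined with $\eta(0) = 0$, is used: if $d(x(\lambda), \Omega)$ did not tend to $0$, there would exist $\varepsilon > 0$ and arbitrarily large $\lambda$ with $d(x(\lambda), \Omega) \ge \varepsilon$, hence $\eta(d(x(\lambda), \Omega)) \ge \eta(\varepsilon) > 0$, contradicting the convergence to $0$. Therefore $\lim_{\lambda \to \infty} d(x(\lambda), \Omega) = 0$, and in particular $\liminf_{\lambda \to \infty} d(x(\lambda), \Omega) = 0$. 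Together with the boundedness of $\{x(\lambda) \mid \lambda \ge \lambda_0\}$, this is precisely the definition of strong non-degeneracy.

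I do not foresee any serious obstacle here; the argument is a direct chaining of the non-degeneracy selection, the convergence of $\varphi$ along it, and the hypothesized local (in $R$) error bound. The only place where a small check is needed is the passage from $\eta(t_\lambda) \to 0$ to $t_\lambda \to 0$, and this follows immediately from the strict monotonicity of $\eta$ without any continuity assumption on $\eta$.
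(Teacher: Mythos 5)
Your proof is correct and follows essentially the same route as the paper's: use non-degeneracy to fix a bounded selection and the corresponding radius $R$, invoke Proposition~\ref{PrpConvToZero} to get $\varphi(x(\lambda)) \to 0$ so that the error bound (\ref{ErrorBdEstimOmDelt}) eventually applies, and then conclude $d(x(\lambda), \Omega) \to 0$ from the strict monotonicity of $\eta$ and $\eta(0)=0$. The only difference is that you spell out the last step, which the paper leaves as ``easy to check.''
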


\begin{proof}
By the definition of non-degeneracy there exists a selection $x(\cdot)$ of the set-valued mapping $G(\cdot)$ such that
the set $\{ x(\lambda) \mid \lambda \ge \lambda_0 \}$ is bounded for sufficiently large $\lambda_0 \ge 0$, which means
that one can choose $R > 0$ such that $\| x(\lambda) \| \le R$ for any $\lambda \ge \lambda_0$. Hence by the
assumption of the proposition there exist $\delta > 0$, and a strictly increasing function 
$\eta \colon \mathbb{R}_+ \to \mathbb{R}_+$ depending on the chosen $R > 0$ and satisfying (\ref{ErrorBdEstimOmDelt}).

Applying Proposition~\ref{PrpConvToZero} one gets that $\varphi(x(\lambda)) \to 0$ as $\lambda \to \infty$, and there
exists $\lambda_1 \ge 0$ such that $\varphi(x(\lambda)) < \delta$ for any $\lambda \ge \lambda_1$. Therefore taking
into account (\ref{ErrorBdEstimOmDelt}) one has
$$
  \eta(d(x(\lambda), \Omega)) \le \varphi(x(\lambda)) \quad \forall \lambda \ge \max\{ \lambda_0, \lambda_1 \}.
$$
Hence $\eta(d(x(\lambda), \Omega)) \to 0$ as $\lambda \to \infty$. Taking into account the facts that $\eta$ is strictly
increasing and $\eta(0) = 0$ it is easy to check that $d(x(\lambda), \Omega) \to 0$ as $\lambda \to \infty$. Thus,
$F_{\lambda}$ is strongly non-degenerate.  
\end{proof}

\subsection{Necessary and Sufficient Conditions for Exact Penalization: General Case}

Theorem~\ref{ThExPenNormedSpace} gives necessary and sufficient conditions for the penalty function $F_{\lambda}$ to be
exact in the case when $X$ is a normed space. However, the assumptions of this theorem are hard to verify. In this
subsection, we discuss some other general necessary and sufficient conditions for the penalty function $F_{\lambda}$ to
be exact that do not rely on the (strong) non-degeneracy of $F_{\lambda}$.

Suppose that there exists $\lambda_0 \ge 0$ such that for any $\lambda \ge \lambda_0$ the penalty function
$F_{\lambda}$ attains a global minimum on the set $A$. Then by Proposition~\ref{PrpConvToZero} for any selection
$x(\cdot)$ of the mapping $G(\cdot)$ one has $\varphi(x(\lambda)) \to 0$ as $\lambda \to \infty$. Hence for any 
$\delta > 0$ one gets that $\varphi(x(\lambda)) < \delta$ for any sufficiently large $\lambda$. Therefore, for the
penalty function $F_{\lambda}$ to be exact it is necessary and sufficient that $F_{\lambda}$ is an exact penalty
function for the problem
$$
  \min f(x) \quad \text{subject to} \quad x \in M, \quad x \in \Omega_{\delta}
$$
for some $\delta > 0$, i.e. the set $A$ can be replaced by the ``smaller'' set $\Omega_{\delta}$. Here 
$\Omega_{\delta} = \{ x \in A \mid \varphi(x) < \delta \}$. However, this result can be significantly sharpened, since
one can replace the assumption that $F_{\lambda}$ attains a global minimum with the assumption that $F_{\lambda}$ is
merely bounded below. In order to give a convenient formulation of this result we need an auxiliary definition of
exactness of a penalty function on a set.

\begin{definition}
Let $C \subset A$ be a nonempty set. The penalty function $F_{\lambda}$ is said to be \textit{exact} on the set $C$, if
there exists $\lambda^* \ge 0$ such that for any $\lambda \ge \lambda^*$ one has $F_{\lambda}(x) \ge f^*$ for any 
$x \in C$. The greatest lower bound of all such $\lambda^*$ is called \textit{the least exact penalty parameter} of
$F_{\lambda}$ on $C$, and is denoted by $\lambda^*(C, f, \varphi)$ or simply by $\lambda^*(C)$, if $f$ and $\varphi$ are
fixed.
\end{definition}

\begin{remark}
It is easy to check that if $F_{\lambda}$ is exact on a set $C \subset A$, then
$$
  \lambda^*(C) = \sup_{y \in C \setminus \Omega} \frac{f^* - f(y)}{\varphi(y)} < + \infty.
$$
Furthermore, the penalty function $F_{\lambda}$ is exact on the set $C$ iff the supremum on the right-hand side is
finite. Note also that $\lambda^*(A) = \lambda^*(f, \varphi)$.
\end{remark}

\begin{lemma}[on exactness of a penalty function] \label{LemmaEPF}
Let $X$ be a topological space. The penalty function $F_{\lambda}$ is exact if and only if the following two conditions
are satisfied:
\begin{enumerate}
\item{$F_{\mu}$ is bounded below on $A$ for some $\mu \ge 0$;}

\item{the penalty function $F_{\lambda}$ is exact on $\Omega_{\delta}$ for some $\delta > 0$.}
\end{enumerate}
Moreover, one has
\begin{equation} \label{ExPenParamViaOmDelta}
  \lambda^*(f, \varphi) \le \max\left\{ \lambda^*(\Omega_{\delta}), \mu + \frac{f^* - c}{\delta} \right\},
\end{equation}
where $c = \inf_{x \in A} F_{\mu}(x)$.
\end{lemma}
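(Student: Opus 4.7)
The plan is to reduce global exactness to the elementary criterion from the remark following the definition: $F_{\lambda}$ is exact if and only if $\inf_{x\in A} F_{\lambda}(x) = f^{*}$ for some $\lambda \ge 0$. Thus, after picking the right threshold for $\lambda$, I only need to show $F_{\lambda}(x) \ge f^{*}$ for every $x \in A$, and then observe that any globally optimal $x^{*} \in \Omega$ realizes $F_{\lambda}(x^{*}) = f^{*}$, so the infimum equals $f^{*}$ and Corollary~\ref{CrlrDefExPenFunc} applies.

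For necessity, suppose $F_{\lambda}$ is exact. Then for any $\lambda > \lambda^{*}(f, \varphi)$ one has $\inf_{x\in A} F_{\lambda}(x) = f^{*} > -\infty$, so $F_{\lambda}$ (and hence $F_{\mu}$ for $\mu = \lambda$) is bounded below, which gives condition~1. By Proposition~\ref{PrpEquivDefOfExactPenFunc} the supremum of $(f^{*} - f(y))/\varphi(y)$ over $A \setminus \Omega$ is finite, so a fortiori over $\Omega_{\delta}\setminus\Omega$ for any $\delta > 0$, which gives condition~2 together with $\lambda^{*}(\Omega_{\delta}) \le \lambda^{*}(f, \varphi)$.

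For sufficiency, fix $\mu$, $c = \inf_{x \in A} F_{\mu}(x)$, $\delta > 0$, and $\lambda^{*}(\Omega_{\delta})$ as in the hypotheses. Split $A$ into $\Omega_{\delta}$ and $A \setminus \Omega_{\delta}$. On $\Omega_{\delta}$, condition~2 directly yields $F_{\lambda}(x) \ge f^{*}$ for every $\lambda \ge \lambda^{*}(\Omega_{\delta})$. On $A \setminus \Omega_{\delta}$ one has $\varphi(x) \ge \delta$, and from $F_{\mu}(x) \ge c$ it follows that $f(x) \ge c - \mu \varphi(x)$, so
$$
  F_{\lambda}(x) = f(x) + \lambda \varphi(x) \ge c + (\lambda - \mu)\varphi(x) \ge c + (\lambda - \mu)\delta
$$
whenever $\lambda \ge \mu$. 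The right-hand side is at least $f^{*}$ precisely when $\lambda \ge \mu + (f^{*} - c)/\delta$; note that $c \le F_{\mu}(x^{*}) = f^{*}$ for any globally optimal $x^{*}$, so this bound automatically implies $\lambda \ge \mu$. Hence for $\lambda \ge \max\{\lambda^{*}(\Omega_{\delta}),\, \mu + (f^{*}-c)/\delta\}$ we have $F_{\lambda}(x) \ge f^{*}$ on all of $A$, and equality is attained at any globally optimal solution, so $\inf_{x \in A} F_{\lambda}(x) = f^{*}$. Corollary~\ref{CrlrDefExPenFunc} then gives exactness of $F_{\lambda}$ together with the estimate~(\ref{ExPenParamViaOmDelta}).

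The argument is almost entirely bookkeeping; the only slightly delicate point is ensuring that the coefficient $(\lambda - \mu)$ is nonnegative before using $\varphi(x) \ge \delta$ in the chain of inequalities, which is why the bound $\mu + (f^{*}-c)/\delta$ appears naturally and already incorporates the constraint $\lambda \ge \mu$ via $c \le f^{*}$.
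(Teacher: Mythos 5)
Your proof is correct and follows essentially the same route as the paper: the same splitting of $A$ into $\Omega_{\delta}$ and $A\setminus\Omega_{\delta}$, the same inequality $F_{\lambda}(x)=F_{\mu}(x)+(\lambda-\mu)\varphi(x)\ge c+(\lambda-\mu)\delta$ on the complement, and the same threshold $\max\{\lambda^*(\Omega_{\delta}),\,\mu+(f^*-c)/\delta\}$, with the converse direction read off from the definitions. Your explicit remark that $c\le f^*$ guarantees $\lambda\ge\mu$ is a small bookkeeping point the paper leaves implicit; otherwise the arguments coincide.
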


\begin{proof}
Let us prove the ``only if'' part of the theorem. The validity of the ``if'' part follows directly from definitions.

Note that $c = \inf_{x \in A} F_{\mu}(x) > - \infty$ due to the fact that $F_{\mu}$ is bounded below on $A$.
Consequently, for any $x \in A \setminus \Omega_{\delta}$ one has
$$
  F_{\lambda}(x) = F_{\mu}(x) + (\lambda - \mu) \varphi(x) \ge c + (\lambda - \mu) \delta \ge
  f^* \quad \forall \lambda \ge \nu,
$$
where $\nu = \mu + (f^* - c) / \delta$. Observe that from the fact that the penalty function $F_{\lambda}$ is
exact on $\Omega_{\delta}$ it follows that
$$
  F_{\lambda}(x) \ge f^* \quad \forall \lambda \ge \lambda^*(\Omega_{\delta}) \quad 
  \forall x \in \Omega_{\delta}.
$$
Therefore for any $x \in A \setminus \Omega$ one has
$$
  F_{\lambda}(x) \ge f^* \quad \forall \lambda \ge 
  \max\left\{ \lambda^*(\Omega_{\delta}), \mu + \frac{f^* - c}{\delta} \right\},
$$
which implies that $F_{\lambda}$ is an exact penalty function, and the inequality (\ref{ExPenParamViaOmDelta}) holds
true.  
\end{proof}

One can easily obtain sufficient conditions for $F_{\lambda}$ to be exact on $\Omega_{\delta}$ similar to the ones
stated in Proposition~\ref{PrpExPenNearOmNormedCase}.

\begin{proposition}
Let $(X, d)$ be a metric space. Suppose that there exist $ \delta > 0$ and functions 
$\omega, \eta \colon \mathbb{R}_+ \to \mathbb{R}_+$ such that
\begin{enumerate}
\item{$\omega(0) = 0$, $\eta(0) = 0$ and $\eta(t) > 0$ for any $t > 0$;}

\item{$f(x) \ge f^* - \omega(d(x, \Omega))$ for any $x \in \Omega_{\delta} \setminus \Omega$;}

\item{$\varphi(x) \ge \eta(d(x, \Omega))$ for any $x \in \Omega_{\delta}$;}

\item{$\sup\{ \omega(t) / \eta(t) \mid t \in \mathbb{R}_+, \eta(t) < \delta \} =: \sigma < +\infty$.}
\end{enumerate}
Then $F_{\lambda}$ is exact on $\Omega_{\delta}$ and $\lambda^*(\Omega_{\delta}) \le \sigma$.
\end{proposition}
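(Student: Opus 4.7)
The plan is to fix an arbitrary $\lambda \ge \sigma$ and verify that $F_{\lambda}(x) \ge f^*$ for every $x \in \Omega_{\delta}$; once this is shown, the definition of exactness on a set yields both the exactness of $F_{\lambda}$ on $\Omega_{\delta}$ and the estimate $\lambda^*(\Omega_{\delta}) \le \sigma$. I would proceed by a natural case split according to whether $x$ lies in $\Omega$ or in $\Omega_{\delta} \setminus \Omega$.

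First, if $x \in \Omega \cap \Omega_{\delta}$, then $\varphi(x) = 0$ and $f(x) \ge f^*$ by the definition of $f^*$, so $F_{\lambda}(x) = f(x) \ge f^*$ for every $\lambda \ge 0$. Now suppose $x \in \Omega_{\delta} \setminus \Omega$, and set $t = d(x, \Omega)$. Condition~3 gives $\eta(t) \le \varphi(x) < \delta$. Here I would split once more according to whether $t > 0$ or $t = 0$, because $x \notin \Omega$ does not preclude $d(x, \Omega) = 0$ in general (a closedness assumption on $\Omega$ is not imposed).

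In the subcase $t = 0$, condition~1 gives $\omega(t) = \omega(0) = 0$, so condition~2 yields $f(x) \ge f^*$, and since $\varphi(x) \ge 0$ one obtains $F_{\lambda}(x) \ge f^*$ for every $\lambda \ge 0$. In the subcase $t > 0$, one has $\eta(t) > 0$ and $\eta(t) < \delta$, so condition~4 applies and gives $\omega(t) \le \sigma \eta(t)$. Combining conditions~2 and 3 then yields
\[
  F_{\lambda}(x) = f(x) + \lambda \varphi(x) \ge f^* - \omega(t) + \lambda \eta(t) \ge f^* + (\lambda - \sigma)\eta(t) \ge f^*
\]
for every $\lambda \ge \sigma$, which completes the estimate.

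The argument is essentially bookkeeping, so there is no genuine obstacle. The only subtle point to be careful about is the subcase $d(x, \Omega) = 0$ with $x \notin \Omega$, where condition~3 becomes vacuous and one has to lean on $\omega(0) = 0$ from condition~1 together with non-negativity of $\varphi$ to recover $F_{\lambda}(x) \ge f^*$. Everything else is a direct application of conditions 2--4 and of the definition of $\lambda^*(\Omega_{\delta})$.
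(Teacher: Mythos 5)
Your proof is correct: the case split $x \in \Omega$, $x \in \Omega_\delta \setminus \Omega$ with $d(x,\Omega) = 0$, and $d(x,\Omega) > 0$ covers everything, and the estimate $F_\lambda(x) \ge f^* + (\lambda - \sigma)\eta(d(x,\Omega))$ is exactly the intended direct verification (the paper leaves this proposition unproved, remarking it is obtained just as the analogous local result, Theorem~\ref{ThUpEstimPenPar}). Your care with the degenerate case $d(x,\Omega)=0$, $x \notin \Omega$, where only $\omega(0)=0$ and $\varphi \ge 0$ are available, is precisely the right point to flag.
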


\begin{remark}
{(i) Note that the above proposition along with the lemma on the exactness of a penalty function significantly sharpens
some results on exact penalization from \cite{Demyanov} (in particular, Theorem 3.4.1), since it allows one to avoid
any assumptions on the existence of a point of global minimum of the penalty function.
}

\noindent{(ii) If $X$ is a normed space and the set $\{ x \in A \mid F_{\mu}(x) < f^* \}$ is bounded 
for some $\mu \ge 0$, then the assumption that $F_{\lambda}$ is exact on $\Omega_{\delta}$ for some $\delta > 0$ in the
lemma on exactness of a penalty function can be replaced by the weaker assumption that for any $R > 0$ the penalty
function $F_{\lambda}$ is exact on the set $\Omega_{\delta} \cap B(0, R)$. Moreover, one can also modify the previous
proposition accordingly in order to obtain simple sufficient condition for $F_{\lambda}$ to be exact 
on $\Omega_{\delta} \cap B(0, R)$ for any $R > 0$.
}
\end{remark}

As an important corollary to the lemma on exactness of a penalty function, one can obtain necessary and sufficient
conditions for exact penalization in the finite dimensional case different from the ones stated in
Theorem~\ref{ThExPenFiniteDim}. The main difference of these conditions is that one does not suppose that the penalty
function $F_{\lambda}$ attains a global minimum for sufficiently large $\lambda \ge 0$, which makes it easier to verify
these conditions.

\begin{theorem} \label{ThFinDimGenCase}
Let $X$ be a finite dimensional normed space, $A$ be a closed set, $\varphi$ be l.s.c. on $A$, and $f$ be l.s.c. on
$\Omega$. Suppose also that one of the following conditions is satisfied: 
\begin{enumerate}
\item{the set $\{ x \in A \mid F_{\mu}(x) < f^* \}$ is bounded for some $\mu \ge 0$;}

\item{the set $\Omega_{\delta}$ is bounded for some $\delta > 0$.}
\end{enumerate}
Then for the penalty function $F_{\lambda}$ to be exact it is necessary and sufficient that $F_{\lambda}$ is exact
at every globally optimal solution of the problem $(\mathcal{P})$, and $F_{\lambda_0}$ is bounded below on $A$ for
some $\lambda_0 \ge 0$.
\end{theorem}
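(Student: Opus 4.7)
The plan is to deduce the theorem from Lemma~\ref{LemmaEPF}, which reduces exactness to two ingredients: $F_{\mu}$ being bounded below on $A$ for some $\mu$, and $F_{\lambda}$ being exact on $\Omega_{\delta}$ for some $\delta > 0$. The first ingredient is built into the hypotheses. The heart of the proof is therefore to establish the second, which I propose to do by contradiction, using either assumption (i) or (ii) to extract a bounded sequence of ``bad'' points, and then using finite-dimensionality and local exactness at its limit to derive a contradiction.

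The necessity direction is essentially a free consequence of the definition of an exact penalty function and Lemma~\ref{LemmaEPF}: if $F_{\lambda}$ is exact, then for every $\lambda > \lambda^*(f,\varphi)$ one has $\inf_{x \in A} F_{\lambda}(x) = f^*$, so $F_{\lambda}$ is bounded below on $A$, and every globally optimal $x^*$ of $(\mathcal{P})$ is a global minimizer of $F_{\lambda}$ on $A$ (hence local), so $F_{\lambda}$ is exact at $x^*$.

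For sufficiency, assume to the contrary that $F_{\lambda}$ is not exact on $\Omega_{\delta}$ for any $\delta > 0$. Then I choose sequences $\lambda_n \to \infty$ and $\delta_n \to 0$, and produce $x_n \in A$ with $0 < \varphi(x_n) < \delta_n$ and $(f^* - f(x_n))/\varphi(x_n) > \lambda_n$, i.e.\ $f(x_n) + \lambda_n \varphi(x_n) < f^*$. The key observation is that in either of the two alternative assumptions the sequence $\{x_n\}$ must be bounded: under (i), once $\lambda_n \ge \mu$ the inequality $f(x_n) + \mu \varphi(x_n) \le f(x_n) + \lambda_n \varphi(x_n) < f^*$ places $x_n$ in the bounded set $\{x \in A \mid F_{\mu}(x) < f^*\}$; under (ii), $\varphi(x_n) < \delta$ eventually places $x_n$ in the bounded set $\Omega_{\delta}$. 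Since $X$ is finite-dimensional and $A$ is closed, I extract a subsequence converging to some $x^* \in A$.

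Lower semicontinuity of $\varphi$ gives $\varphi(x^*) \le \liminf \varphi(x_n) = 0$, hence $x^* \in \Omega$, and then lower semicontinuity of $f$ on $\Omega$ together with $f(x_n) < f^* - \lambda_n \varphi(x_n) \le f^*$ yields $f(x^*) \le f^*$, so $x^*$ is a globally optimal solution of $(\mathcal{P})$. By assumption $F_{\lambda}$ is exact at $x^*$, so by Proposition~\ref{PrpUniformLocalPenal} there exist a neighbourhood $U$ of $x^*$ and $\lambda_0 > 0$ such that $F_{\lambda}(x) \ge F_{\lambda}(x^*) = f(x^*) = f^*$ for all $x \in U \cap A$ and all $\lambda \ge \lambda_0$. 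For $n$ large, $x_n \in U$ and $\lambda_n \ge \lambda_0$, giving $F_{\lambda_n}(x_n) \ge f^*$, which contradicts our construction. Thus $F_{\lambda}$ is exact on some $\Omega_{\delta}$, and Lemma~\ref{LemmaEPF} finishes the proof. The main obstacle is engineering the bounded sequence uniformly under either hypothesis; but the two clauses were precisely chosen so that the two natural bounded regions absorb such sequences, so the argument is essentially forced once one sets up the contradiction.
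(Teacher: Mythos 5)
Your proof is correct, and its outer skeleton (reduce everything to boundedness below of some $F_\mu$ plus exactness on some $\Omega_\delta$, then finish with Lemma~\ref{LemmaEPF}) is the same as the paper's; what differs is how exactness on $\Omega_\delta$ is obtained. The paper argues directly: after reducing condition 2 to condition 1, it uses condition 1 to get exactness outside a ball $B(0,R)$, shows the set $\Omega^*$ of global minimizers inside $B(0,R)$ is compact, extracts a finite subcover of the neighbourhoods furnished by local exactness to get a uniform parameter $\nu$ and a uniform neighbourhood $U$ of $\Omega^*$, covers the remaining feasible points of $B(0,R)$ by an open set $V$ on which $f>f^*$, and then uses compactness of $K=(A\cap B(0,R))\setminus(U\cup V)$ to bound $\varphi$ away from zero there, giving $\Omega_\delta\cap B(0,R)\subset U\cup V$ and hence exactness on $\Omega_\delta$ with the semi-explicit parameter $\max\{\mu,\nu\}$. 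You instead argue by contradiction: pick $x_n\in\Omega_{\delta_n}\setminus\Omega$ with $F_{\lambda_n}(x_n)<f^*$, observe that either hypothesis (i) or (ii) forces $\{x_n\}$ to be bounded (so no reduction of (ii) to (i) is needed), pass to a limit $x^*\in\Omega$, show it is globally optimal, and contradict Proposition~\ref{PrpUniformLocalPenal}. This is essentially the paper's sufficiency argument for Theorem~\ref{ThExPenFiniteDim}, transplanted from the global minimizers $x(\lambda_n)$ (whose existence this theorem deliberately avoids assuming) to points violating exactness on $\Omega_{\delta_n}$; it is shorter and treats the two boundedness hypotheses symmetrically, while the paper's covering argument yields an explicit $\delta$ and parameter estimate and provides the template reused for Theorem~\ref{ThExactOnBoundedSets}. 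One caveat worth noting: your appeal to ``$f$ l.s.c. on $\Omega$'' at $x^*$ uses lower semicontinuity along a sequence from $A\setminus\Omega$, i.e.\ l.s.c. at points of $\Omega$ with respect to $A$ rather than of the restriction $f|_\Omega$; the paper's own proof (construction of the set $V$) requires exactly the same reading of the hypothesis, so this is not a gap relative to the paper.
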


\begin{proof}
Necessity. The validity of the assertion follows directly from definitions.

Sufficiency. Let us show that $F_{\lambda}$ is exact on $\Omega_{\delta}$ for sufficiently small $\delta > 0$. Then by
the lemma on exactness of a penalty function one concludes that $F_{\lambda}$ is exact. 

It is easy to check that if the set $\Omega_{\delta}$ is bounded for some $\delta > 0$, and $F_{\lambda_0}$ is bounded
below on $A$, then the set $\{ x \in A \mid F_{\mu}(x) < f^* \}$ is bounded for sufficiently large $\mu \ge 0$.
Thus, one can suppose that condition 1 holds true. Therefore there exists $R > 0$ such that for any $\lambda \ge \mu$
one has 
\begin{equation} \label{EqExPenOutsideBall}
  F_{\lambda}(x) \ge f^* \quad \forall x \in X \colon \| x \| > R,
\end{equation}
i.e. the penalty function $F_{\lambda}$ is exact outside the ball $B(0, R)$.

Denote by $\Omega^*$ the set of all points of global minimum of $f$ on $\Omega$ that belong to $B(0, R)$. Observe that
$\Omega$ is closed by virtue of the facts that $\varphi$ is non-negative and l.s.c. on $A$, 
$\Omega = \{ x \in A \mid \varphi(x) = 0 \}$, and $A$ is closed. Hence applying the lower semicontinuity of $f$ on
$\Omega$, and the fact that $\Omega^* \subset \Omega \cap B(0, R)$ one gets that $\Omega^*$ is a compact set (recall
that $X$ is a finite dimensional normed space).

Let us show that there exist $\nu \ge 0$ and $r_0 > 0$ such that
\begin{equation} \label{ExactInANeighbOfOptSol}
  F_{\lambda}(x) \ge f^* \quad \forall x \in \bigcup_{x^* \in \Omega^*} \big( U(x^*, r_0) \cap A \big)
  \quad \forall \lambda \ge \nu,
\end{equation}
i.e. that $F_{\lambda}$ is exact in a neighbourhood of all globally optimal solutions of the problem $(\mathcal{P})$
belonging to the ball $B(0, R)$.

Indeed, by the assumptions of the theorem the penalty function $F_{\lambda}$ is exact at every globally optimal solution
of the problem $(\mathcal{P})$. Therefore for any $x^* \in \Omega^*$ there exist $\lambda(x^*) \ge 0$ and $r(x^*) > 0$ 
such that
\begin{equation} \label{ExactAtOptSolFinDimCase}
  F_{\lambda}(x) \ge F_{\lambda}(x^*) = f^* \quad \forall x \in U\big(x^*, r(x^*)\big) \cap A
  \quad \forall \lambda \ge \lambda(x^*).
\end{equation}
Taking into account the compactness of $\Omega^*$ one gets that there exist $x_1^*, \ldots, x_m^* \in \Omega^*$
such that
$$
  \Omega^* \subset \bigcup_{k = 1}^m U \big( x_k^*, r(x_k^*) / 2 \big).
$$
Denote
$$
  \nu = \max_{k \in \{1, \ldots, m \}} \lambda(x_k^*), \quad
  r_0 = \min_{k \in \{1, \ldots, m \}} r(x_k^*).
$$
Let $x \in U(x^*, r_0) \cap A$ for some $x^* \in \Omega^*$. Then there exists $k \in \{1, \ldots, m\}$ such that 
$x^* \in U(x_k^*, r(x_k^*) / 2) \cap A$. Therefore $x \in U(x_k^*, r(x_k^*)) \cap A$, and applying
(\ref{ExactAtOptSolFinDimCase}) one obtains that
$$
  F_{\lambda}(x) \ge F_{\lambda}(x_k^*) \ge f^* \quad \forall \lambda \ge \nu,
$$
i.e. (\ref{ExactInANeighbOfOptSol}) holds true.

Denote
\begin{equation} \label{DefOfNeighbOfGlobMin}
  U = \bigcup_{x^* \in \Omega^*} \Big( U(x^*, r_0) \cap A \Big), \quad C = (\Omega \setminus U) \cap B(0, R).
\end{equation}
Let $x \in C$. Then $f(x) > f^*$, and taking into account the lower semicontinuity of $f$ on $\Omega$ one gets that
there exists $h(x) > 0$ such that $f(y) > f^*$ for all $y \in U(x, h(x))$. Denote
$$
  V = \bigcup_{x \in C} \Big( U(x, h(x)) \cap A \Big).
$$
Note that for any $x \in V$ one has $f(x) > f^*$, which yields
\begin{equation} \label{ExactOutsideOptSol}
  F_{\lambda}(x) \ge f(x) > f^* \quad \forall x \in V \quad \forall \lambda \ge 0,
\end{equation}
i.e. $F_{\lambda}$ is exact on $V$ and $\lambda^*(V) = 0$.

The sets $U$ and $V$ are obviously open in $A$ (here and below we suppose that the set $A$ is endowed with the induced
metric). Therefore the set 
$$
  K = (A \cap B(0, R)) \setminus (U \cup V)
$$
is closed in $A$, which implies that $K$ is closed in $X$, since $A$ is closed. Consequently, $K$ is compact.
Furthermore, by the definition of $U$ and $V$ one has $\Omega \cap B(0, R) \subset U \cup V$, i.e. the sets $K$ and
$\Omega$ are disjoint. Hence for any $x \in K$ one has $\varphi(x) > 0$, which, with the use of the compactness of $K$,
implies that there exists $\delta > 0$ such that $\varphi(x) \ge \delta$ for any $x \in K$ or, equivalently,
$\Omega_{\delta} \cap B(0, R) \subset U \cup V$.
Therefore, combining (\ref{EqExPenOutsideBall}), (\ref{ExactInANeighbOfOptSol}), (\ref{DefOfNeighbOfGlobMin}), and
(\ref{ExactOutsideOptSol}) one gets that
$$
  F_{\lambda}(x) \ge f^* \quad \forall x \in \Omega_{\delta} \quad \forall \lambda \ge \max\{ \mu, \nu \},
$$
i.e. $F_{\lambda}$ is exact on $\Omega_{\delta}$.
\end{proof}

\begin{remark}
A vague prototype of Theorem~\ref{ThFinDimGenCase} appeared in \cite{WuBaiYangZhang}, Corollary~2.3.
\end{remark}

Arguing in a similar way to the proof of the theorem above one can verify that the following result on the exactness of
the penalty function $F_{\lambda}$ on bounded sets holds true.

\begin{theorem} \label{ThExactOnBoundedSets}
Let $X$ be a finite dimensional normed space, the functions $f$ and $\varphi$ be l.s.c. on $A$, and $A$ be closed. Then
the penalty function $F_{\lambda}$ is exact on any bounded set $C \subset A$ if and only if $F_{\lambda}$ is exact at
every globally optimal solution of the problem $(\mathcal{P})$.
\end{theorem}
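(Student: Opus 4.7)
The plan is to model the proof on that of Theorem~\ref{ThFinDimGenCase}, taking advantage of the fact that restricting attention to a bounded set automatically supplies the compactness that was previously obtained from boundedness hypotheses on sublevel sets.

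\textbf{Necessity.} Fix a globally optimal solution $x^*$ of $(\mathcal{P})$ and consider the bounded set $C = B(x^*,1) \cap A$. By hypothesis there exists $\lambda^* \ge 0$ with $F_\lambda(x) \ge f^* = F_\lambda(x^*)$ for every $x \in C$ and every $\lambda \ge \lambda^*$. Thus $x^*$ is a local minimizer of $F_{\lambda^*}$ on $A$, i.e.\ $F_\lambda$ is exact at $x^*$.

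\textbf{Sufficiency.} Fix a bounded set $C \subset A$ and choose $R>0$ with $C \subset B(0,R)$. Define $\Omega^{*} = \{x \in \Omega \mid f(x)=f^{*}\} \cap B(0,R)$. Since $A$ is closed and $\varphi$ is l.s.c.\ and non-negative, $\Omega = \{x\in A\mid \varphi(x)=0\}$ is closed; and the l.s.c.\ of $f$ on the closed set $\Omega$ makes $\Omega^{*}$ closed. Together with boundedness this gives that $\Omega^{*}$ is compact (here the finite-dimensionality of $X$ enters). I would now replay the three-region decomposition from Theorem~\ref{ThFinDimGenCase}. First, exactness of $F_\lambda$ at each $x^{*}\in \Omega^{*}$ combined with Proposition~\ref{PrpUniformLocalPenal} and a finite subcover of $\Omega^{*}$ yields $\nu\ge 0$ and $r_0>0$ such that $F_\lambda(x)\ge f^{*}$ on $U := \bigcup_{x^{*}\in\Omega^{*}} U(x^{*},r_0)\cap A$ for every $\lambda\ge\nu$. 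Second, for each $x\in (\Omega\cap B(0,R))\setminus U$ one has $f(x)>f^{*}$, so lower semicontinuity of $f$ on $A$ produces an open neighborhood on which $f>f^{*}$; taking their union yields an open (in $A$) set $V$ with $F_\lambda\ge f > f^{*}$ on $V$ for every $\lambda\ge 0$.

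\textbf{The third region and the main point.} The residual set $K = (A\cap B(0,R))\setminus (U\cup V)$ is closed in $A$, hence closed in $X$ (as $A$ is closed), and bounded; therefore compact. By construction $K$ is disjoint from $\Omega$, so $\varphi>0$ on $K$, and by the lower semicontinuity of $\varphi$ on the compact set $K$, there exists $\delta>0$ with $\varphi(x)\ge \delta$ for all $x\in K$. Here is the one spot where the proof of Theorem~\ref{ThFinDimGenCase} had to lean on a global boundedness hypothesis to control $f$ from below; in the present setting this is automatic because $f$ is l.s.c.\ on the compact set $K$ and hence attains a finite minimum $m$ there. Consequently $F_\lambda(x)\ge m + \lambda\delta \ge f^{*}$ on $K$ whenever $\lambda\ge (f^{*}-m)/\delta$. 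Combining the three estimates gives $F_\lambda(x)\ge f^{*}$ on $A\cap B(0,R) \supset C$ for all $\lambda$ sufficiently large, so $F_\lambda$ is exact on $C$. The main technical obstacle, avoiding any global hypothesis on $f$ or on sublevel sets, is handled entirely by the observation that on the compact residual set $K$ the l.s.c.\ function $f$ is automatically bounded below, which is exactly what permits the localisation of the previous theorem's argument to an arbitrary bounded $C$.
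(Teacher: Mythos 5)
Your proof is correct and follows the route the paper itself indicates: the paper only sketches the argument as ``arguing in a similar way to the proof of Theorem~\ref{ThFinDimGenCase}'', and your three-region decomposition of $A \cap B(0,R)$ (neighbourhoods of the optimal solutions in the ball via Proposition~\ref{PrpUniformLocalPenal} and compactness of $\Omega^*$, the set where $f > f^*$, and the compact residual set $K$) is exactly that adaptation, with the necessity direction handled correctly by testing on $B(x^*,1)\cap A$. The one point where you depart from the earlier proof---replacing the global boundedness/bounded-below hypotheses and Lemma~\ref{LemmaEPF} by the automatic lower bound of the l.s.c.\ function $f$ on the compact set $K$---is precisely the adjustment that the restriction to a bounded set is meant to supply, and you carry it out correctly.
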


\subsection{The Palais--Smale Condition}

An interesting approach to the study of exact penalization based on the use of the Palais--Smale condition
\cite{PalaisSmaleCond} was proposed in the works of Zaslavski (see \cite{Zaslavski} and the references therein). In this
subsection, we generalize this approach to our setting in order to obtain another general method for the study of exact
penalization. Furthermore, this generalization allows us to significantly sharpen some results from \cite{Zaslavski}, in
particular Theorems 2.3--2.5.

In this subsection, we assume that $(X, d)$ is a metric space. In order to introduce a generalized Palais--Smale
condition we need to recall the definition of the rate of steepest descent of a function defined on a metric space
(see, for instance, \cite{Demyanov, DemyanovRSD}). The following definition is a simple modification of the one given in
\cite{DemyanovRSD} that is more suitable for our purposes.

\begin{definition}
Let $g \colon X \to \mathbb{R} \cup \{ +\infty \} \cup \{ - \infty \}$, and $x \in A$ be such that $|g(x)| < +\infty$.
The quantity
$$
  g^{\downarrow}_A(x) = \liminf_{y \to x, y \in A} \frac{g(y) - g(x)}{d(y, x)}
$$
is called \textit{the rate of steepest descent} of the function $g$ on the set $A$ at the point $x$. If $A = X$, then
the value $g^{\downarrow}(x) = g^{\downarrow}_X(x)$ is referred to as \textit{the rate of steepest descent} of $g$ at
$x$.
\end{definition}

\begin{remark}
Note an obvious connection between rate of steepest descent and the much more wide-spread in the literature on
nonsmooth analysis and variational analysis tool, namely, strong slope (see, for instance, \cite{Ioffe,Aze}). Recall
that the quantity
$$
  |\nabla| g(x) = \limsup_{y \to x} \frac{(g(x) - g(y))^+}{d(x, y)}
$$
is called \textit{the strong slope} of $g$ at $x$. Here $t^+ = \max\{ t, 0 \}$. 

It is easy to see that $|\nabla| g(x) > 0$ iff $g^{\downarrow}(x) < 0$, and in the case $|\nabla| g(x) > 0$ the
equality $|\nabla| g(x) = - g^{\downarrow}(x)$ holds true. Therefore almost any result using strong slope can be
easily reformulated in terms of rate of steepest descent; however, the converse statement is not true. Indeed, by
definition one has $|\nabla| g(\cdot) \ge 0$ for any function $g$. On the other hand, the rate of steepest descent
$g^{\downarrow}(x)$ can be greater than zero. Thus, the rate of steepest descent carries more information about 
function's behaviour in some cases.
\end{remark}

We need the following approximate Fermat's rule in terms of rate of steepest descent that is a simple corollary to the
Ekeland variational principle (cf.~\cite{UderzoCalm, Aze}).

\begin{lemma}[Approximate Fermat's rule]
Let $(X, d)$ be a complete metric space, and let a function $g \colon X \to \mathbb{R} \cup \{ + \infty \}$ be proper,
l.s.c., and bounded below. Let also $\varepsilon > 0$ and $x_{\varepsilon}$ be such that
$$
  g(x_{\varepsilon}) \le \inf_{x \in X} g(x) + \varepsilon.
$$
Then for any $r > 0$ there exists $y \in X$ such that $g(y) \le g(x_{\varepsilon})$, 
$d(y, x_{\varepsilon}) \le r$ and $g^{\downarrow}(y) \ge - \varepsilon / r$.
\end{lemma}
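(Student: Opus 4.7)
The plan is to invoke Ekeland's variational principle in its standard form and then read off the conclusion on the rate of steepest descent. Since $(X,d)$ is complete and $g$ is proper, l.s.c., and bounded below, the hypothesis $g(x_\varepsilon)\le \inf_{x\in X} g(x)+\varepsilon$ is exactly the starting condition required by Ekeland's principle. Applied with the coefficient $\varepsilon/r$, it yields a point $y \in X$ satisfying
$$
g(y)\le g(x_\varepsilon),\qquad d(y,x_\varepsilon)\le r,\qquad g(z)+\frac{\varepsilon}{r}\,d(z,y)\ge g(y)\quad \forall z\in X.
$$
The first two conclusions are exactly what the lemma demands, so the remaining task is to translate the third (the Ekeland perturbed-minimum inequality) into the lower bound $g^{\downarrow}(y)\ge -\varepsilon/r$ on the rate of steepest descent.

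For this final step, I would rearrange the Ekeland inequality: for any $z\ne y$,
$$
\frac{g(z)-g(y)}{d(z,y)}\ge -\frac{\varepsilon}{r}.
$$
Taking $\liminf$ as $z\to y$ (with $z\in X$) gives directly
$$
g^{\downarrow}(y)=\liminf_{z\to y}\frac{g(z)-g(y)}{d(z,y)}\ge -\frac{\varepsilon}{r},
$$
which is the desired conclusion. Note that $|g(y)|<+\infty$ (required for $g^{\downarrow}(y)$ to be defined) follows from $g(y)\le g(x_\varepsilon)<+\infty$ together with the boundedness below of $g$.

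There is essentially no obstacle here beyond correctly quoting Ekeland's variational principle with parameter $\varepsilon/r$; the only subtlety is that the definition of $g^{\downarrow}$ in the paper uses $y\in A$, but for $A=X$ (the version used in the statement via $g^{\downarrow}=g^{\downarrow}_X$) the liminf is taken over all $z\in X$, matching exactly the quantifier in the Ekeland inequality. Hence the proof reduces to one invocation of Ekeland's principle followed by a one-line passage to the $\liminf$.
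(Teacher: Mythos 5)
Your proof is correct and follows exactly the route the paper intends: the lemma is stated there as a direct corollary of Ekeland's variational principle (with parameter $\varepsilon/r$), and your passage from the perturbed-minimum inequality to the bound $g^{\downarrow}(y) \ge -\varepsilon/r$ via the $\liminf$ is the one-line argument the author leaves to the reader. Nothing to add.
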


Now we are ready to give the definition of generalized Palais--Smale condition. Since we shall apply this condition
to the function $\varphi$, we formulate this condition in a very specific case. However, it is worth mentioning
that our definition is weaker than the traditional one even in the case when $X$ is a normed space, and the function
$\varphi$ is continuously Fr\'echet differentiable.

\begin{definition}
The function $\varphi$ is said to satisfy the \textit{generalized Palais--Smale condition} on the set $A$, if any
sequence $\{ x_n \} \subset A \setminus \Omega$ such that 
$$
  \liminf_{n \to \infty} \varphi(x_n) = 0, \quad \liminf_{n \to \infty} \varphi^{\downarrow}_A(x_n) \ge 0
$$
has a convergent subsequence.
\end{definition}

The following result on exact penalization in the case when $\varphi$ satisfies the generalized Palais--Smale condition
holds true.

\begin{theorem} \label{ThPalaisSmaleCond}
Let $(X, d)$ be a complete metric space, $A$ be a closed set, and the functions $f$ and $\varphi$ be l.s.c. on $A$.
Suppose also that 
\begin{enumerate}
\item{$f$ is Lipschitz continuous on $\Omega_{\delta} \setminus \Omega$ for some $\delta > 0$;}

\item{ $\varphi$ satisfies the generalized Palais--Smale condition.}
\end{enumerate}
Then the penalty function $F_{\lambda}$ is exact if and only if $F_{\lambda}$ is exact at every globally optimal
solution of the problem $(\mathcal{P})$, and $F_{\mu}$ is bounded below on $A$ for some $\mu > 0$.
\end{theorem}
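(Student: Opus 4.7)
The ``only if'' direction is immediate, since global exactness of $F_\lambda$ trivially implies both its exactness at every globally optimal solution and the boundedness from below of $F_\lambda$ on $A$ for all sufficiently large $\lambda$. For the converse, I would apply Lemma~\ref{LemmaEPF}: because $F_\mu$ is bounded below on $A$ by hypothesis, it suffices to exhibit $\delta > 0$ such that $F_\lambda$ is exact on $\Omega_\delta$.

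I would argue by contradiction. If $F_\lambda$ is not exact on $\Omega_\delta$ for any $\delta > 0$, the remark following the definition of exactness on a set furnishes sequences $\lambda_n \to +\infty$ and $x_n \in A \setminus \Omega$ with $F_{\lambda_n}(x_n) < f^*$; writing $c := \inf_A F_\mu$ and using $F_{\lambda_n} \ge F_\mu \ge c$ for $\lambda_n \ge \mu$, one obtains $\varphi(x_n) \le (f^* - c)/(\lambda_n - \mu) \to 0$. Apply the Approximate Fermat's Rule on the complete metric space $A$ (closed in $X$) to the proper, l.s.c., bounded below functions $F_{\lambda_n}|_A$ with $\varepsilon_n := f^* - c$ and Ekeland radius $r_n := \sqrt{\lambda_n}$: this produces $y_n \in A$ satisfying $F_{\lambda_n}(y_n) \le F_{\lambda_n}(x_n) < f^*$ and $(F_{\lambda_n})^{\downarrow}_A(y_n) \ge -(f^*-c)/\sqrt{\lambda_n}$. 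The same calculation as for $x_n$ gives $\varphi(y_n) \to 0$ and $y_n \notin \Omega$, so $y_n \in \Omega_\delta \setminus \Omega$ eventually. The lower semicontinuity of $\varphi$ together with $\varphi(y_n) > 0$ prevents any sequence in $\Omega$ from approaching $y_n$, while test points $z \to y_n$ with $z \in A \setminus \Omega_\delta$ make $(\varphi(z) - \varphi(y_n))/d(z, y_n)$ diverge to $+\infty$; hence the liminf defining $\varphi^{\downarrow}_A(y_n)$ is effectively attained over $z \to y_n$ in $\Omega_\delta \setminus \Omega$, where the Lipschitz hypothesis on $f$ with constant $L$ applies. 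The identity $\lambda_n [\varphi(z) - \varphi(y_n)] = [F_{\lambda_n}(z) - F_{\lambda_n}(y_n)] - [f(z) - f(y_n)]$ combined with $f(z) - f(y_n) \le L\, d(z, y_n)$ then yields
\[
  \varphi^{\downarrow}_A(y_n) \;\ge\; \frac{(F_{\lambda_n})^{\downarrow}_A(y_n) - L}{\lambda_n} \;\ge\; -\frac{(f^* - c)/\sqrt{\lambda_n} + L}{\lambda_n} \;\xrightarrow[n \to \infty]{}\; 0.
\]

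Thus $\liminf_n \varphi(y_n) = 0$ and $\liminf_n \varphi^{\downarrow}_A(y_n) \ge 0$, so the generalized Palais--Smale condition supplies a subsequence $y_{n_k} \to y^* \in A$. The l.s.c.\ of $\varphi$ forces $\varphi(y^*) = 0$, i.e.\ $y^* \in \Omega$, while $f(y_{n_k}) < f^*$ together with the l.s.c.\ of $f$ and the feasibility of $y^*$ forces $f(y^*) = f^*$; hence $y^*$ is a globally optimal solution of $(\mathcal{P})$. By hypothesis $F_\lambda$ is exact at $y^*$, so Proposition~\ref{PrpUniformLocalPenal} supplies a neighbourhood $U$ of $y^*$ and $\lambda^* \ge 0$ such that $F_\lambda(x) \ge F_\lambda(y^*) = f^*$ for every $x \in U \cap A$ and every $\lambda \ge \lambda^*$; for $k$ large enough $y_{n_k} \in U$ and $\lambda_{n_k} \ge \lambda^*$, contradicting $F_{\lambda_{n_k}}(y_{n_k}) < f^*$. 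The principal obstacle is the middle step: one must carefully justify that the liminf defining $\varphi^{\downarrow}_A(y_n)$ is dominated by test points in $\Omega_\delta \setminus \Omega$ (so that the Lipschitz hypothesis on $f$ is applicable), and balance the Ekeland radius $r_n$ against the suboptimality gap $\varepsilon_n$ in such a way that $(\varepsilon_n/r_n + L)/\lambda_n \to 0$.
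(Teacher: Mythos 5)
Your proposal is correct and follows essentially the same route as the paper's proof: argue by contradiction, produce near-minimizers $x_n$ of $F_{\lambda_n}$ with $\lambda_n \to \infty$ and $F_{\lambda_n}(x_n) < f^*$, apply the approximate Fermat's rule (Ekeland) to get points $y_n \in \Omega_\delta \setminus \Omega$ with controlled descent rate, use the Lipschitz hypothesis on $f$ to force $\liminf_n \varphi^{\downarrow}_A(y_n) \ge 0$, invoke the generalized Palais--Smale condition to extract $y_{n_k} \to y^* \in \argmin_{\Omega} f$, and contradict $F_{\lambda_{n_k}}(y_{n_k}) < f^*$ via Proposition~\ref{PrpUniformLocalPenal}. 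The only deviations are bookkeeping choices (a direct quantitative lower bound on $\varphi^{\downarrow}_A(y_n)$, after checking that test points outside $\Omega_\delta \setminus \Omega$ cannot lower the liminf, instead of the paper's sub-contradiction, and the Ekeland parameters $\varepsilon_n = f^* - c$, $r_n = \sqrt{\lambda_n}$ instead of $r = \varepsilon$), which are immaterial.
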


\begin{proof}
Let $F_{\mu}$ be bounded below on $A$ for some $\mu > 0$, and $F_{\lambda}$ be exact at every globally optimal
solution of the problem $(\mathcal{P})$. Arguing by reductio ad absurdum, suppose that the penalty function
$F_{\lambda}$ is not exact. Then for any $\lambda > 0$ one has $\inf_{x \in A} F_{\lambda}(x) < f^*$.

Choose a strictly decreasing sequence $\{ \delta_n \} \subset (0, \delta)$ such that $\delta_n \to 0$ as 
$n \to \infty$. For any $n \in \mathbb{N}$ and $x \in A \setminus \Omega_{\delta_n}$ one has
\begin{equation} \label{BooundFromBelowOutsOmega}
  F_{\lambda}(x) = F_{\mu}(x) + (\lambda - \mu) \varphi(x) \ge c + (\lambda - \mu) \delta_n \ge f^* 
  \quad \forall \lambda \ge \mu + \frac{f^* - c}{\delta_n},
\end{equation}
where $c = \inf_{x \in A} F_{\mu}(x)$. Hence for any $n \in \mathbb{N}$ there exists $x_n \in \Omega_{\delta_n}$ such
that $F_{\lambda_n}(x_n) < f^*$, where $\lambda_n = \mu + (f^* - c) / \delta_n$. Applying approximate Fermat's rule
with $r = \varepsilon = F_{\lambda_n}(x_n) - \inf_{x \in A} F_{\lambda_n}(x)$ one gets that for all $n \in \mathbb{N}$
there exists $y_n \in A$ such that 
\begin{equation} \label{approxFermatRuleCons}
  F_{\lambda_n}(y_n) \le F_{\lambda_n}(x_n) < f^*, \quad (F_{\lambda_n})_A^{\downarrow}(y_n) \ge - 1 
  \quad \forall n \in \mathbb{N}.
\end{equation}
Observe that by (\ref{BooundFromBelowOutsOmega}) one has $y_n \in \Omega_{\delta_n} \setminus \Omega$ or, equivalently, 
$0 < \varphi(y_n) < \delta_n$, which implies that $\varphi(y_n) \to 0$ as $n \to \infty$.

Consider the sequence $\{ \varphi^{\downarrow}_A(y_n) \}$. At first, suppose that 
$\liminf_{n \to \infty} \varphi^{\downarrow}_A(y_n) \le - a < 0$ for some $a > 0$. Then there exists a subsequence which
we denote again by $\{ y_n \}$ such that $\varphi^{\downarrow}_A(y_n) \le - 2a / 3$ for all $n$. By the definition
of rate of steepest descent, for any $n \in \mathbb{N}$ there exists a sequence $\{ z_s^{(n)} \} \subset A$, 
$s \in \mathbb{N}$ such that $z_s^{(n)} \to y_n$ as $s \to \infty$ and
$$
  \varphi(z^{(n)}_s) - \varphi(y_n) \le - \frac{a}{2} d(z^{(n)}_s, y_n) \quad \forall s \in \mathbb{N}. 
$$
Moreover, without loss of generality one can suppose that $\varphi(z^{(n)}_s) > 0$ for any $s \in \mathbb{N}$ due to
the facts that $\varphi$ is l.s.c. on $A$, and $\varphi(y_n) > 0$. 
Hence $z^{(n)}_s \in \Omega_{\delta_n} \setminus \Omega$ for all $s \in \mathbb{N}$. Consequently, employing the
Lipschitz continuity of $f$ on $\Omega_{\delta} \setminus \Omega$ one gets that there exists $L > 0$ such that 
for any $s \in \mathbb{N}$
\begin{multline*}
  F_{\lambda_n}(z^{(n)}_s) - F_{\lambda_n}(y_n) = f(z^{(n)}_s) - f(y_n) + \lambda_n(\varphi(z^{(n)}_s) - \varphi(y_n))
  \\
  \le \left( L - \frac{a}{2}\lambda_n \right) d(z^{(n)}_s, y_n),
\end{multline*}
which yields $(F_{\lambda_n})_A^{\downarrow}(y_n) \le L - a \lambda_n / 2$. Taking into account the fact that the
sequence $\{ \delta_n \}$ converges to zero one obtains that for sufficiently large $n \in \mathbb{N}$ one has
$$
  \lambda_n = \mu + \frac{f^* - c}{\delta_n} > \frac{2 L + 2}{a}.
$$
Therefore, for all $n$ large enough one has $(F_{\lambda_n})^{\downarrow}_A(y_n) < - 1$, which contradicts
(\ref{approxFermatRuleCons}). Thus, $\liminf_{n \to \infty} \varphi^{\downarrow}_A(y_n) \ge 0$ and $\varphi(y_n) \to 0$
as $n \to \infty$. Hence applying the generalized Palais--Smale condition one gets that there exists a subsequence 
$\{ y_{n_k} \}$ converging to some $y_0 \in A$ (recall that $A$ is closed). Taking into account the lower
semicontinuity of $\varphi$ one gets that $\varphi(y_0) = 0$, which implies $y_0 \in \Omega$. 

Note that for any $k \in \mathbb{N}$ one has $f(y_{n_k}) < f^*$, since $F_{\lambda}(x) = f(x)$ for any 
$\lambda \ge 0$ and $x \in \Omega$. Consequently, applying the lower semicontinuity of $f$ and the fact that 
$f^* = \min_{x \in \Omega} f(x)$ one gets that $y_0$ is a globally optimal solution of the problem
$(\mathcal{P})$, and $F_{\lambda}$ is exact at $y_0$. Hence there exists $\lambda_0 > 0$ and $r > 0$ such that 
$$
  F_{\lambda}(x) \ge F_{\lambda}(y_0) = f^* \quad \forall x \in A \cap B(y_0, r) \quad \forall \lambda \ge \lambda_0.
$$
Note that for any sufficiently large $k$ one has $y_{n_k} \in A \cap B(y_0, r)$ and $\lambda_{n_k} \ge \lambda_0$.
Therefore one has
$$
  F_{\lambda_{n_k}}(y_{n_k}) \ge f^*,
$$
which contradicts the definition of $y_{n_k}$. Thus, the penalty function $F_{\lambda}$ is exact.

The validity of the converse statement follows directly from definitions.  
\end{proof}

In the case when $X$ is a normed space one can prove a slightly stronger version of the theorem above. Namely, let us
say that a function $\varphi \colon X \to \mathbb{R}_+$ satisfy the \textit{generalized Palais--Smale condition on
bounded subsets} of the set $A$, if any bounded sequence $\{ x_n \} \subset A \setminus \Omega$ such that 
$$
  \liminf_{n \to \infty} \varphi(x_n) = 0, \quad \liminf_{n \to \infty} \varphi^{\downarrow}_A(x_n) \ge 0
$$
has a convergent subsequence. Arguing in a similar way to the proof of Theorem~\ref{ThPalaisSmaleCond} one can easily
verify that the following result holds true.

\begin{theorem}
Let $X$ be a Banach space, $A$ be a closed set, and the functions $f$ and $\varphi$ be l.s.c. on $A$. Suppose that $f$
is Lipschitz continuous on any bounded subset of $\Omega_{\delta} \setminus \Omega$ for some $\delta > 0$, and
$\varphi$ satisfy the generalized Palais--Smale condition on bounded subsets of the set $A$. Suppose also that one of
the following conditions is satisfied:
\begin{enumerate}
\item{the set $\{ x \in A \mid F_{\mu}(x) < f^* \}$ is bounded for some $\mu > 0$;}

\item{the set $\Omega_{\xi}$ is bounded for some $\xi > 0$.}
\end{enumerate}
Then the penalty function $F_{\lambda}$ is exact if and only if $F_{\lambda}$ is exact at every globally optimal
solution of the problem $(\mathcal{P})$, and $F_{\nu}$ is bounded below on $A$ for some $\nu > 0$.
\end{theorem}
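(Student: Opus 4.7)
The plan is to mimic the proof of Theorem~\ref{ThPalaisSmaleCond} line by line, with the single substantive change that every sequence produced by approximate Fermat's rule must be kept inside a fixed bounded set, so that the \emph{bounded} generalized Palais--Smale condition can be invoked in place of the full one. Necessity is immediate from the definitions, so the only real work is sufficiency, which I would attack by reductio ad absurdum, assuming that $F_{\lambda}$ is exact at every global minimizer of $(\mathcal{P})$, that $F_{\nu}$ is bounded below on $A$ for some $\nu>0$, but that nevertheless $\inf_{x\in A} F_{\lambda}(x) < f^*$ for every $\lambda > 0$.

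First I would perform a preliminary reduction: show that conditions~1 and~2 of the theorem may be merged into condition~1. Condition~1 is the useful hypothesis, since it exactly says that sublevel sets of the form $\{x\in A\mid F_{\mu}(x) < f^*\}$ are bounded. If instead condition~2 holds, i.e.\ $\Omega_{\xi}$ is bounded for some $\xi>0$, then combined with the hypothesis that $F_{\nu}$ is bounded below on $A$ one has, for any sufficiently large $\mu$, the inclusion $\{x\in A\mid F_{\mu}(x)<f^*\}\subset\Omega_{\xi}$, exactly as in the proof of Theorem~\ref{ThFinDimGenCase}. Thus I may assume condition~1 from the outset.

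Next, repeating the construction from the proof of Theorem~\ref{ThPalaisSmaleCond}, I would choose $\delta_n\downarrow 0$ with $\delta_n < \min\{\delta,\xi\}$, put $\lambda_n = \mu + (f^* - c)/\delta_n$ where $c = \inf_{x\in A} F_{\mu}(x)$, pick $x_n\in\Omega_{\delta_n}$ with $F_{\lambda_n}(x_n) < f^*$, and apply the approximate Fermat lemma to obtain $y_n\in A$ satisfying $F_{\lambda_n}(y_n)\le F_{\lambda_n}(x_n)<f^*$ and $(F_{\lambda_n})_A^{\downarrow}(y_n)\ge -1$. The crucial new observation is that, since $F_{\mu}\le F_{\lambda_n}$ and $F_{\lambda_n}(y_n)<f^*$, the sequence $\{y_n\}$ lies in the bounded set $\{x\in A\mid F_{\mu}(x)<f^*\}$. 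From the inequalities $0 < \varphi(y_n) < \delta_n$ one still has $\varphi(y_n)\to 0$, placing $\{y_n\}$ eventually in $\Omega_{\delta}\setminus\Omega$.

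Finally, using the Lipschitz continuity of $f$ on the bounded set $\{y_n\}\cap(\Omega_{\delta}\setminus\Omega)$ (with some Lipschitz constant $L$), the same descent-rate estimate as in Theorem~\ref{ThPalaisSmaleCond} forces $\liminf_{n\to\infty}\varphi_A^{\downarrow}(y_n)\ge 0$, for otherwise $(F_{\lambda_n})_A^{\downarrow}(y_n)$ would drop below $-1$ once $\lambda_n$ is large. Having a bounded sequence in $A\setminus\Omega$ with $\varphi(y_n)\to 0$ and $\liminf\varphi_A^{\downarrow}(y_n)\ge 0$, I invoke the \emph{bounded} generalized Palais--Smale condition to extract a subsequence converging to some $y_0\in A$; lower semicontinuity of $\varphi$ and $f$ together with $f(y_{n_k})<f^*$ force $y_0\in\Omega$ and $f(y_0)=f^*$, so that $F_{\lambda}$ must be exact at $y_0$. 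This immediately contradicts $F_{\lambda_{n_k}}(y_{n_k})<f^*$ for large $k$, completing the proof. The only delicate step — and the place where this theorem genuinely differs from Theorem~\ref{ThPalaisSmaleCond} — is verifying that the approximate-critical sequence $\{y_n\}$ lies in a prescribed bounded set; everything else is a transcription of the previous argument with ``Palais--Smale'' replaced by ``Palais--Smale on bounded subsets''.
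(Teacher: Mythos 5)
Your argument is correct and is essentially the paper's own proof (the paper merely states that one argues as in the proof of Theorem~\ref{ThPalaisSmaleCond}); the key modification you identify --- that $F_{\mu}\le F_{\lambda_n}$ traps the approximate-critical points $y_n$ in the bounded set $\{x\in A\mid F_{\mu}(x)<f^*\}$, together with the reduction of condition~2 to condition~1 and the merging of the bound-below parameter into $\mu$ --- is exactly what is needed to replace the Palais--Smale condition by its bounded version. One cosmetic fix: the uniform Lipschitz constant should be taken on a slightly enlarged bounded subset of $\Omega_{\delta}\setminus\Omega$ (the comparison points $z^{(n)}_s$ only lie near, not in, $\{y_n\}$), which your hypothesis of Lipschitz continuity on every bounded subset permits.
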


\subsection{The Optimal Value Function of a Perturbed Problem and Exact Penalization}

As in the case of local exactness, the global exactness of a penalty function can be studied via an analysis of a
perturbed optimization problem. This analysis is based on the study of behaviour of the optimal value function of a
perturbed problem. Thus, the main results of this subsection underline the importance of parametric optimization for
the study of constrained optimization problems and, in particular, exact penalty functions. 

Note that the results of this subsection sharpen some results from \cite{LowerOrderCalmness} and some results on linear
penalty functions from \cite{AugmLagrange}.

Following the ideas of subsection~\ref{SectProbCalmness}, consider the perturbed family of constrained optimization
problems
$$
  \min f(x) \quad \text{subject to} \quad x \in M(p), \quad x \in A, \eqno (\mathcal{P}_p)
$$
where $M \colon P \rightrightarrows X$ is a set--valued mapping, $(P, d)$ is a metric space of perturbation
parameters, and $M(p^*) = M$ for some $p^* \in P$. Recall that $\Omega(p) = M(p) \cap A$ for any $p \in P$, and
$\Omega^{-1}(x) = \{ p \in P \mid x \in \Omega(p) \}$ for any $x \in X$. 

Denote by $h(p) = \inf_{x \in \Omega(p)} f(x)$ for any $p \in P$ \textit{the optimal value function} (or
\textit{the perturbation function}) of the perturbed problem ($\mathcal{P}_p$).

\begin{definition}
Let $\omega \colon \mathbb{R}_+ \to \mathbb{R}_+$ be a given function. The optimal value function $h$ is called
$\omega$-\textit{calm from below} at the point $p^*$, if there exist $r > 0$ and $L > 0$ such that
$$
  h(p) - h(p^*) \ge - L \omega(d(p, p^*)) \quad \forall p \in B(p^*, r).
$$
\end{definition}

Necessary and sufficient conditions for the penalty function $F_{\lambda}$ to be exact can be expressed in terms of the
calmness from below of the optimal value function $h(p)$.

\begin{theorem}
Let $X$ be a topological space, and $\omega \colon \mathbb{R}_+ \to \mathbb{R}_+$ be a continuous from the right
function. Suppose that the following conditions are satisfied:
\begin{enumerate}
\item{the penalty function $F_{\mu}$ is bounded below on $A$ for some $\mu > 0$;}

\item{the optimal value function $h$ is $\omega$-calm from below at $p^*$, i.e. there exist $r_1 > 0$ and $L > 0$ 
such that
$$
  h(p) - h(p^*) \ge - L \omega(d(p, p^*)) \quad \forall p \in B(p^*, r_1);
$$
}

\item{there exist $\delta > 0$, $r_2 \in (0, r_1]$ and $a > 0$ such that
\begin{equation} \label{OmegaDeltaPertProb}
  \Omega_{\delta} \subset \bigcup_{p \in U(p^*, r_2)} \Omega(p),
\end{equation}
and
$$
  \varphi(x) \ge a \omega(d(p^*, \Omega^{-1}(x))) \quad \forall x \in \Omega(p) \quad \forall p \in U(p^*, r_2).
$$
}
\end{enumerate}
Then the penalty function $F_{\lambda}$ is exact, and
$$
  \lambda^*(f, \varphi) \le \max\left\{ \frac{L}{a}, \mu + \frac{f^* - c}{\delta} \right\},
$$
where $c = \inf_{x \in A} F_{\mu}(x)$.
\end{theorem}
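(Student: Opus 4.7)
The plan is to invoke the lemma on exactness of a penalty function (Lemma~\ref{LemmaEPF}), which already takes care of condition~1 and reduces the task to showing that $F_{\lambda}$ is exact on the sublevel set $\Omega_{\delta}$ with $\lambda^*(\Omega_{\delta}) \le L/a$; combining these two estimates inside the lemma then produces exactly the bound stated in the theorem.

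To establish $\lambda^*(\Omega_{\delta}) \le L/a$, I would fix an arbitrary $x \in \Omega_{\delta} \setminus \Omega$ and appeal to the inclusion $\Omega_{\delta} \subset \bigcup_{p \in U(p^*, r_2)} \Omega(p)$ from assumption~3 to obtain some $p \in U(p^*, r_2)$ with $x \in \Omega(p)$. This shows that $\Omega^{-1}(x) \cap U(p^*, r_2) \ne \emptyset$, so $d(p^*, \Omega^{-1}(x)) < r_2 \le r_1$. I would then pick a sequence $\{ p_n \} \subset \Omega^{-1}(x)$ with $d(p^*, p_n) \to d(p^*, \Omega^{-1}(x))$ from above and $d(p^*, p_n) < r_1$ for all $n$, which exists by the very definition of the infimum defining the distance. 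Since $x \in \Omega(p_n)$ implies $f(x) \ge h(p_n)$, the $\omega$-calmness of $h$ at $p^*$ gives
$$
  f(x) \ge h(p_n) \ge f^* - L \omega(d(p^*, p_n)) \quad \forall n \in \mathbb{N}.
$$
Passing to the limit as $n \to \infty$ and invoking the continuity from the right of $\omega$ would then produce the key estimate
$$
  f^* - f(x) \le L \omega(d(p^*, \Omega^{-1}(x))).
$$
Dividing by $\varphi(x)$ and combining with the error-bound-type inequality $\varphi(x) \ge a \omega(d(p^*, \Omega^{-1}(x)))$ from assumption~3 would give $(f^* - f(x))/\varphi(x) \le L/a$, where the degenerate case $\omega(d(p^*, \Omega^{-1}(x))) = 0$ forces $f(x) \ge f^*$ and so makes the ratio non-positive. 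Since $x \in \Omega_{\delta} \setminus \Omega$ was arbitrary, this yields $\lambda^*(\Omega_{\delta}) \le L/a$.

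Feeding this into Lemma~\ref{LemmaEPF} together with condition~1 would then conclude both that $F_{\lambda}$ is exact and that
$$
  \lambda^*(f, \varphi) \le \max\left\{ \lambda^*(\Omega_{\delta}),\ \mu + \frac{f^* - c}{\delta} \right\} \le \max\left\{ \frac{L}{a},\ \mu + \frac{f^* - c}{\delta} \right\},
$$
which is exactly the desired estimate. The only delicate step in this plan is the passage to the limit that produces the key estimate on $f^* - f(x)$: the argument relies on the observation that any approximating sequence for $d(p^*, \Omega^{-1}(x))$ drawn from $\Omega^{-1}(x)$ automatically converges from above, so that continuity from the right of $\omega$ is sufficient and no monotonicity assumption on $\omega$ needs to be imposed.
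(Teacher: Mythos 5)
Your proposal is correct and follows essentially the same route as the paper: reduce the problem via Lemma~\ref{LemmaEPF} to proving exactness on $\Omega_{\delta}$ with $\lambda^*(\Omega_{\delta}) \le L/a$, then combine the $\omega$-calmness of $h$ with the lower estimate $\varphi(x) \ge a\,\omega(d(p^*, \Omega^{-1}(x)))$ from condition~3. The only (cosmetic) difference is in how right-continuity of $\omega$ is used: you pass to the limit along a minimizing sequence in $\Omega^{-1}(x)$ to get $f^* - f(x) \le L\,\omega(d(p^*, \Omega^{-1}(x)))$ directly, whereas the paper fixes $a_0 \in (0,a)$, selects a single $p_0 \in \Omega^{-1}(x)$ with $\omega(d(p_0,p^*)) \le \varphi(x)/a_0$, and then lets $a_0 \to a$; both handle the degenerate cases correctly.
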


\begin{proof}
Let us show that $F_{\lambda}$ is exact on $\Omega_{\delta}$ and $\lambda^*(\Omega_{\delta}) \le L / a$, where 
$\delta > 0$ is such that (\ref{OmegaDeltaPertProb}) holds true. Then applying the lemma on exactness of a penalty
function one gets the desired result.

Clearly, $h(p^*) = f^*$, since $M(p^*) = M$. Applying the $\omega$-calmness from below of the function $h$ one obtains
that
\begin{equation} \label{InequalfValFunc}
  f(x) - f^* \ge h(p) - h(p^*) \ge - L \omega(d(p, p^*)) \quad \forall x \in \Omega(p)
  \quad \forall p \in B(p^*, r_1).
\end{equation}
Fix an arbitrary $x \in \Omega_{\delta}$. If $x \in \Omega$, then $F_{\lambda}(x) = f(x) \ge f^*$ for any 
$\lambda \ge 0$ by the definition of $f^*$. Hence one can suppose that $x \notin \Omega$ or, equivalently, 
$\varphi(x) > 0$.

By condition 3, there exists $p \in U(p^*, r_2)$ such that $x \in \Omega(p)$, and 
$\varphi(x) \ge a \omega(d(p^*, \Omega^{-1}(x)))$. Hence $d(p^*, \Omega^{-1}(x)) < r_2$. Due to the continuity from
the right of the function $\omega$, and the fact that $\varphi(x) > 0$ for any $a_0 \in (0, a)$ there
exists $p_0 \in \Omega^{-1}(x)$ such that $d(p_0, p^*) < r_2$, and
$$
  \frac{1}{a_0} \varphi(x) \ge \omega(d(p_0, p^*)).
$$
Note that $x \in \Omega(p_0)$, since $p_0 \in \Omega^{-1}(x)$. Hence with the use of (\ref{InequalfValFunc}), and 
the fact that $r_2 \le r_1$ one gets that
$$
  f(x) - f^* \ge -L \omega(d(p_0, p^*)) \ge -\frac{L}{a_0} \varphi(x),
$$
which implies that $F_{\lambda}(x) \ge f^*$ for all $\lambda \ge L / a_0$. Consequently, taking into account the facts
that $a_0 \in (0, a)$, and $x \in \Omega_{\delta}$ were chosen arbitrary we arrive at the required result.  
\end{proof}

\begin{theorem} \label{ThOmCalmMargFuncNessForExPen}
Let $X$ be a topological space, and $\omega \colon \mathbb{R}_+ \to \mathbb{R}_+$ be a given function. Suppose that
there exist $a > 0$ and $r > 0$ such that
\begin{equation} \label{PhiParamUpperEstim}
  \varphi(x) \le a \omega(d(p, p^*)) \quad \forall x \in \Omega(p) \quad \forall p \in B(p^*, r).
\end{equation}
Then for the penalty function $F_{\lambda}$ to be exact it is necessary that the optimal value function $h$ is
$\omega$-calm from below at $p^*$.
\end{theorem}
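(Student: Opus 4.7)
The plan is to exploit the characterization of the least exact penalty parameter given in Proposition~\ref{PrpEquivDefOfExactPenFunc}, which yields the pointwise lower bound $f(x) \ge f^* - \lambda^*(f,\varphi)\,\varphi(x)$ for every $x \in A$, and then to combine this bound with the upper estimate (\ref{PhiParamUpperEstim}) on $\varphi$ along feasible sets of the perturbed problem.

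More precisely, first I would set $\lambda = \lambda^*(f,\varphi) + 1$ (any value strictly above $\lambda^*(f,\varphi)$ would do). By exactness of $F_{\lambda}$ and the remark following the definition of global exactness, one has $\inf_{x \in A} F_{\lambda}(x) = f^*$, so that
$$
  f(x) + \lambda \varphi(x) \ge f^* \quad \forall x \in A,
$$
and hence $f(x) \ge f^* - \lambda \varphi(x)$ for every $x \in A$. Next, fix any $p \in B(p^*, r)$; if $\Omega(p) = \emptyset$ then $h(p) = +\infty$ and the desired calmness inequality is trivial, so assume $\Omega(p)$ is nonempty. For each $x \in \Omega(p) \subset A$, hypothesis (\ref{PhiParamUpperEstim}) gives $\varphi(x) \le a\,\omega(d(p,p^*))$, so
$$
  f(x) \ge f^* - \lambda a\,\omega(d(p,p^*)).
$$
Taking the infimum over $x \in \Omega(p)$ and recalling that $h(p^*) = \inf_{x \in \Omega(p^*)} f(x) = \inf_{x \in \Omega} f(x) = f^*$ (because $M(p^*) = M$), one obtains
$$
  h(p) - h(p^*) \ge - L\,\omega(d(p,p^*)) \quad \forall p \in B(p^*, r),
$$
with $L = \lambda a$. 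This is precisely the $\omega$-calmness from below of $h$ at $p^*$.

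There is essentially no obstacle here: the proof is a direct application of Proposition~\ref{PrpEquivDefOfExactPenFunc} combined with assumption (\ref{PhiParamUpperEstim}). The only minor point to be careful about is the trivial case in which $\Omega(p)$ may be empty for some $p$ near $p^*$, which is handled by the convention $\inf_{\emptyset} = +\infty$ on the optimal value function. Note also that no lower semicontinuity or topological assumption beyond what is already assumed in the standing setting is needed.
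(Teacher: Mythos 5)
Your proof is correct and follows essentially the same route as the paper: exactness gives $F_{\lambda}(x) \ge f^*$ on all of $A$ for a suitable $\lambda$, which combined with \eqref{PhiParamUpperEstim} and $h(p^*) = f^*$ yields the calmness bound with constant $\lambda a$ after taking the infimum over $\Omega(p)$. Your explicit handling of the case $\Omega(p) = \emptyset$ is a minor tidy addition the paper leaves implicit.
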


\begin{proof}
Let $F_{\lambda}$ be an exact penalty function. Then there exists $\lambda \ge 0$ such that
$$
  F_{\lambda}(x) \ge f^* \quad \forall x \in A.
$$
Applying (\ref{PhiParamUpperEstim}) and the fact that $f^* = h(p^*)$ one gets that for any $p \in B(p^*, r)$
$$
  f(x) - h(p^*) = f(x) - f^* \ge - \lambda \varphi(x) \ge - \lambda a \omega(d(p, p^*)) \ \quad 
  \forall x \in \Omega(p).
$$
Taking the infimum over all $x \in \Omega(p)$ one has
$$
  h(p) - h(p^*) \ge - \lambda a \omega(d(p, p^*)) \quad \forall p \in B(p^*, r),
$$
i.e. $h$ is $\omega$-calm at $p^*$.  
\end{proof}

\begin{remark} \label{Rmrk_LinDependOnParam_Global}
Let, as in Remark~\ref{RmrkLinDependOnParam}, $Y$ be a normed space, $P = Y$, and 
$$
  \Omega = \{ x \in X \mid 0 \in \Phi(x) \}, \quad \Omega(p) = \{ x \in X \mid 0 \in \Phi(x) - p \},
$$
where $\Phi \colon X \rightrightarrows Y$ is a set-valued mapping with closed values. 
Define $p^* = 0$, and $\varphi(x) = d(0, \Phi(x))$. Then $\Omega^{-1}(x) = \Phi(x)$, and, as it is easy to verify, one
has
$$
  \varphi(x) = d(p^*, \Omega^{-1}(x)) \quad \forall x \in X, \quad
  \varphi(x) \le d(p, p^*) \quad \forall x \in \Omega(p) \; \forall p \in P.
$$
Furthermore, for any $\delta > 0$ one has
$$
  \Omega_{\delta} = \bigcup_{p \in U(0, \delta)} \Omega(p).
$$
Therefore by the theorems above the penalty function $F_{\lambda}$ is exact if and only if the optimal value function
is calm from below at the origin, and $F_{\mu}$ is bounded below on $A$ for some $\mu > 0$. Thus, if 
a perturbation of a problem is ``linear'', then the calmness from below of the optimal value function of this problem
is, in essence, equivalent to the exactness of the linear penalty function for this problem.
\end{remark}

Utilizing the same ideas as in Remark~\ref{RmrkParamViaPhi} one can give another characterization of exact penalty
functions.

\begin{theorem}
Let $X$ be a topological space. For any $p \ge 0$ define the function
$$
  h(p) = \inf\{ f(x) \mid x \in A \colon \varphi(x) \le p \}.
$$
Then for the penalty function $F_{\lambda}$ to be exact it is necessary and sufficient that $F_{\mu}$ is bounded from
below on $A$ for some $\mu > 0$, and $h$ is calm from below at the origin, i.e.
$$
  \liminf_{p \to + 0} \frac{h(p) - h(0)}{p} > - \infty.
$$
\end{theorem}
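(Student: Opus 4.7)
The plan is to specialize the two preceding theorems to the canonical parametrization suggested by Remark~\ref{RmrkParamViaPhi}: take $P = \mathbb{R}_+$ with its usual metric, the perturbation parameter $p^* = 0$, the set-valued mapping
$$
  M(p) = \{ x \in X \mid \varphi(x) \le p \}, \qquad \Omega(p) = A \cap M(p),
$$
and the function $\omega(t) \equiv t$. Under this choice $\Omega(0) = \{ x \in A \mid \varphi(x) = 0 \} = M \cap A = \Omega$, so the perturbed problem reduces to $(\mathcal{P})$ at $p = p^*$, and the optimal value function of the perturbed family coincides with the $h$ defined in the statement. Also $\Omega^{-1}(x) = [\varphi(x), +\infty)$, hence $d(0, \Omega^{-1}(x)) = \varphi(x)$ for every $x \in X$. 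The $\omega$-calmness of $h$ at $p^* = 0$ with $\omega(t) = t$ is exactly the calmness from below condition $\liminf_{p \to +0} (h(p) - h(0))/p > -\infty$.

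For sufficiency I would invoke the first theorem of this subsection. Condition~1 (boundedness from below of $F_\mu$) is assumed. Condition~2 ($\omega$-calmness from below of $h$ at $0$) is exactly our calmness hypothesis, with $\omega(t) = t$ continuous from the right. For Condition~3, any $\delta > 0$ works: taking $r_2 = \delta$ and $a = 1$, one has $\varphi(x) = d(0,\Omega^{-1}(x)) \ge a \omega(d(0,\Omega^{-1}(x)))$ trivially for every $x$, and if $x \in \Omega_\delta$ then $p := \varphi(x) < \delta$ puts $x \in \Omega(p)$ with $p \in U(0,\delta)$, giving $\Omega_\delta \subset \bigcup_{p \in U(0,\delta)} \Omega(p)$. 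The conclusion of that theorem delivers the exactness of $F_\lambda$.

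For necessity I would apply Theorem~\ref{ThOmCalmMargFuncNessForExPen} with the same $\omega$, $a = 1$ and any $r > 0$. The required inequality $\varphi(x) \le a\,\omega(d(p,p^*)) = p$ for every $x \in \Omega(p)$ and every $p \ge 0$ holds by the very definition of $\Omega(p)$. Consequently the exactness of $F_\lambda$ forces $h$ to be $\omega$-calm from below at $0$, which is again the calmness condition. The fact that $F_\mu$ must be bounded below on $A$ for some $\mu > 0$ follows directly from the definition of an exact penalty function, since exactness means $\inf_{x \in A} F_\lambda(x) = f^* > -\infty$ for all sufficiently large $\lambda$.

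There is no real obstacle here; the only point to double-check is that the metric space $P = \mathbb{R}_+$ (with $d(p,q) = |p-q|$) is a legitimate setting for the previous theorems, and that $\omega(t) = t$ satisfies the continuity-from-the-right requirement of the sufficiency theorem — both are immediate. The final remark about replacing $\varphi(x) \le p$ with $\varphi(x) = p$ made at the end of Remark~\ref{RmrkParamViaPhi} is not needed for the statement, but would follow by the same reduction.
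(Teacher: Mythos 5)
Your proposal is correct and is exactly the route the paper intends (the paper leaves this proof to the reader, pointing to the reduction of Remark~\ref{RmrkParamViaPhi}): specializing the two preceding theorems of the subsection to $P=\mathbb{R}_+$, $p^*=0$, $\Omega(p)=\{x\in A\mid\varphi(x)\le p\}$ and $\omega(t)\equiv t$, under which $d(0,\Omega^{-1}(x))=\varphi(x)$ on $A$ and the perturbed optimal value function is the $h$ of the statement. The only cosmetic adjustment is that in verifying condition~3 of the sufficiency theorem you should take $\delta=r_2\le r_1$ (rather than ``any $\delta>0$''), which is harmless since only the existence of some such $\delta$ is required.
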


Let us study a connection between the $\omega$-calmness from below of the optimal value function, and the
$\omega$-calmness of the problem $(\mathcal{P}_{p^*})$ at its globally optimal solutions.

It is easy to check that the following result holds true.

\begin{proposition}
Let $X$ be a topological space, $\omega \colon \mathbb{R}_+ \to \mathbb{R}_+$ be a given function, and 
$x^* \in \Omega$ be a globally optimal solution of the problem $(\mathcal{P}_{p^*})$. If the optimal value function $h$
is $\omega$-calm from below at $p^*$, then the problem $(\mathcal{P}_{p^*})$ is $\omega$-calm at $x^*$.
\end{proposition}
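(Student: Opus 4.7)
The plan is to derive the $\omega$-calmness of the problem from the $\omega$-calmness from below of $h$ by a direct comparison, exploiting the fact that $x^*$ being globally optimal forces $f(x^*)=h(p^*)$ and that every feasible point of a perturbed problem bounds $h$ from above.

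First I would unpack the definitions. Let $r>0$ and $L>0$ be the constants furnished by the $\omega$-calmness from below of $h$ at $p^*$, so that $h(p)-h(p^*)\ge -L\omega(d(p,p^*))$ for all $p\in B(p^*,r)$. Since $M(p^*)=M$, the set $\Omega(p^*)$ equals $\Omega$, and the assumption that $x^*$ is a globally optimal solution of $(\mathcal{P}_{p^*})$ yields $f(x^*)=\inf_{x\in\Omega(p^*)}f(x)=h(p^*)$.

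Next, I would pick an arbitrary $p\in B(p^*,r)$ and an arbitrary $x\in\Omega(p)$. By the definition of $h$ as an infimum, $f(x)\ge h(p)$, and chaining this with the $\omega$-calmness of $h$ from below gives
$$
f(x)\;\ge\;h(p)\;\ge\;h(p^*)-L\omega(d(p,p^*))\;=\;f(x^*)-L\omega(d(p,p^*)).
$$
This inequality already holds for every $x\in\Omega(p)$ without any localization around $x^*$, so it is in particular valid on $U\cap\Omega(p)$ for any neighborhood $U$ of $x^*$. Taking $a:=L$ and $U:=X$ (or any neighborhood of $x^*$) fits the definition of $\omega$-calmness of $(\mathcal{P}_{p^*})$ at $x^*$ verbatim, and completes the proof.

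There is no real obstacle here; the statement is essentially a tautology once one observes that $f(x^*)=h(p^*)$ for globally optimal $x^*$ and that $f(x)\ge h(p)$ on $\Omega(p)$. The only mild point worth noting in the write-up is that the conclusion requires no restriction of $x$ to a neighborhood of $x^*$, which is why the converse direction (recovering $\omega$-calmness from below of $h$ from $\omega$-calmness of the problem) is strictly stronger and generally fails without additional hypotheses.
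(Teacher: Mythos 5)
Your proof is correct: the chain $f(x)\ge h(p)\ge h(p^*)-L\omega(d(p,p^*))=f(x^*)-L\omega(d(p,p^*))$ for $x\in\Omega(p)$, $p\in B(p^*,r)$, together with $f(x^*)=h(p^*)$ for a globally optimal $x^*$, gives exactly the $\omega$-calmness of $(\mathcal{P}_{p^*})$ at $x^*$ with $a=L$ and $U=X$. This is precisely the straightforward argument the paper has in mind when it states the result as ``easy to check,'' so nothing further is needed.
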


In some particular cases, the converse statement is also true. The proposition below is a simple generalization of 
Theorem 2.5 from \cite{LowerOrderCalmness}.

\begin{proposition} \label{Prp_OptimValFunc_OmegaCalmness}
Let $A$ be a compact set, $f$ and $\varphi$ be l.s.c. on $A$, and $\omega \colon \mathbb{R}_+ \to \mathbb{R}_+$ be a
strictly increasing and continuous from the right function such that $\omega(0) = 0$, $\omega(t) > 0$ for any $t > 0$.
Suppose that for any globally optimal solution $x^*$ of the problem $(\mathcal{P}_{p^*})$ there exist $a(x^*) > 0$ and a
neighbourhood $V(x^*)$ of $x^*$ such that
\begin{equation} \label{PenTerm_FirstPertCond}
  \varphi(x) \ge a(x^*) \omega( d(p^*, \Omega^{-1}(x)) ) \quad \forall x \in V(x^*).
\end{equation}
Suppose also that there exist $a > 0$ and $r > 0$ such that
\begin{equation} \label{PenTerm_SecondPertCond}
  \varphi(x) \le a \omega( d(p, p^*) ) \quad \forall x \in \Omega(p) \quad \forall p \in B(p^*, r).
\end{equation}
Then the optimal value function $h$ is $\omega$-calm from below at $p^*$ if and only if the problem 
$(\mathcal{P}_{p^*})$ is $\omega$-calm at each of its globally optimal solutions.
\end{proposition}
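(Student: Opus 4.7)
The ``only if'' direction is immediate from the proposition stated just before this one, so the whole task is the ``if'' direction. My plan is to argue by contradiction: assume $\omega$-calmness at every globally optimal solution holds but $h$ is not $\omega$-calm from below at $p^*$. The negation gives, for each $n \in \mathbb{N}$, a point $p_n \in B(p^*, 1/n)$ with
$$
  h(p_n) - h(p^*) < - n\, \omega( d(p_n, p^*) ).
$$
Note that $d(p_n, p^*) > 0$ necessarily, since $\omega(0) = 0$ and $h(p^*) < h(p^*)$ is impossible. By the definition of $h(p_n)$, one can then pick $x_n \in \Omega(p_n)$ with $f(x_n) < h(p^*) - (n-1)\omega(d(p_n, p^*))$.

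Next I would extract a ``limit'' globally optimal solution. Because $A$ is compact and $f$ is l.s.c., $f$ is bounded below on $A$, say by $m$; then from $m \le f(x_n) < h(p^*) - (n-1)\omega(d(p_n, p^*))$ one gets $\omega(d(p_n, p^*)) \to 0$ (and independently $d(p_n, p^*) \to 0$, consistent with the strict monotonicity of $\omega$). By compactness of $A$, a subsequence $x_{n_k}$ converges to some $x^* \in A$. Using hypothesis (\ref{PenTerm_SecondPertCond}) with $p = p_n$, we get
$$
  \varphi(x_n) \le a\, \omega( d(p_n, p^*) ) \longrightarrow 0,
$$
where the convergence uses the right-continuity of $\omega$ at $0$ and $\omega(0)=0$. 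Lower semicontinuity of $\varphi$ then yields $\varphi(x^*) = 0$, i.e.\ $x^* \in \Omega$. Since $f(x_{n_k}) < h(p^*)$ for large $k$, lower semicontinuity of $f$ gives $f(x^*) \le h(p^*) = f^*$, so $x^*$ is in fact a globally optimal solution of $(\mathcal{P}_{p^*})$, and $f(x^*) = f^*$.

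Now I would apply the $\omega$-calmness hypothesis at this particular $x^*$: there exist $a_1, r_1 > 0$ and a neighbourhood $U$ of $x^*$ such that $f(x) \ge f^* - a_1 \omega(d(p, p^*))$ for all $x \in U \cap \Omega(p)$ and all $p \in B(p^*, r_1)$. For large $k$ we have $x_{n_k} \in U$, $p_{n_k} \in B(p^*, r_1)$, and $x_{n_k} \in \Omega(p_{n_k})$, hence
$$
  f^* - a_1\, \omega(d(p_{n_k}, p^*)) \le f(x_{n_k}) < f^* - (n_k - 1)\, \omega( d(p_{n_k}, p^*) ).
$$
Because $\omega(d(p_{n_k}, p^*)) > 0$ (the $p_n$'s are distinct from $p^*$ and $\omega$ is strictly increasing with $\omega(0)=0$), dividing yields $n_k - 1 < a_1$, contradicting $n_k \to \infty$.

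The only genuinely delicate step is the extraction of the limiting globally optimal solution: verifying that the cluster point $x^*$ lies in $\Omega$ and realises the value $f^*$. This is where the compactness of $A$, the lower semicontinuity of $f$ and $\varphi$, and especially the upper estimate (\ref{PenTerm_SecondPertCond}) together with right-continuity of $\omega$ at $0$ all enter; the subsequent contradiction via $\omega$-calmness at $x^*$ is then purely arithmetic. Note that hypothesis (\ref{PenTerm_FirstPertCond}) plays no role in this direction, as expected, since it is a ``local'' lower estimate of $\varphi$ that is naturally used in the converse direction via the preceding proposition.
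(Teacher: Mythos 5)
Your proof is correct, but it takes a genuinely different route from the paper's. The paper does not argue directly: it notes that the $\omega$-calmness at each globally optimal solution, together with the lower estimate (\ref{PenTerm_FirstPertCond}) (playing the role of $\eta=\omega$), allows one to invoke Theorem~\ref{ThCalmLocalPen}, so that $F_{\lambda}$ is exact at every globally optimal solution; compactness of $A$ and lower semicontinuity of $f$ and $\varphi$ then give global exactness of $F_{\lambda}$ via Theorem~\ref{ThExPenFuncCompactSet}; finally the upper estimate (\ref{PenTerm_SecondPertCond}) and Theorem~\ref{ThOmCalmMargFuncNessForExPen} yield the $\omega$-calmness from below of $h$ (the converse direction being, as you say, the preceding proposition in both treatments). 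You bypass the penalty function machinery entirely and give a self-contained contradiction argument: extract a limiting globally optimal solution from minimizing-type points of the perturbed problems, then contradict the calmness at that point arithmetically. What each buys: the paper's proof is a three-line composition of already established results and exhibits the equivalence as passing through the exactness of $F_{\lambda}$, which is the theme of the section; your proof is more elementary and in fact establishes a slightly stronger statement, since, as you observe, (\ref{PenTerm_FirstPertCond}) is never used in the ``if'' direction, and neither are the strict monotonicity and right-continuity of $\omega$ in an essential way ($\omega(0)=0$ and $\omega(t)>0$ for $t>0$ suffice, because $\omega(d(p_n,p^*))\to 0$ already follows from the boundedness of $f$ from below on the compact set $A$ and the finiteness of $f^*=h(p^*)$). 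One technical caveat: in this section $X$ is only a topological space, so the extraction of a convergent subsequence from $\{x_n\}\subset A$ should strictly be an extraction of a convergent subnet (cluster point in $A$), exactly as the paper does in the proof of Theorem~\ref{ThExPenFuncCompactSet}; your argument carries over verbatim to subnets, since the subnet's indices are cofinal in $\mathbb{N}$ and the final inequality $n_k-1<a_1$ still produces the contradiction, but as written it implicitly assumes sequential compactness of $A$ (e.g.\ $X$ metrizable).
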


\begin{proof}
Let the problem $(\mathcal{P}_{p^*})$ be $\omega$-calm at each of its globally optimal solutions. Then with the use of
the sufficient conditions for the penalty function to be exact at an optimal solution of the problem
$(\mathcal{P}_{p^*})$ in terms of the $\omega$-calmness of this problem (Theorem~\ref{ThCalmLocalPen}) one gets that
$F_{\lambda}$ is exact at every globally optimal solution of the problem $(\mathcal{P}_{p^*})$. Hence by 
Theorem~\ref{ThExPenFuncCompactSet} the penalty function $F_{\lambda}$ is exact. Then applying
Theorem~\ref{ThOmCalmMargFuncNessForExPen} one gets that the optimal value function $h$ is $\omega$-calm from below at
$p^*$. Thus, the proof is complete.  
\end{proof}

\begin{remark}
{(i) Note that for any $p \in P$ and $x \in \Omega(p)$ the inequality $d(p^*, \Omega^{-1}(x)) \le d(p, p^*)$ holds
true. Therefore conditions (\ref{PenTerm_FirstPertCond}) and (\ref{PenTerm_SecondPertCond}) in the proposition above are
consistent. In particular, if the penalty term $\varphi$ has the same form as in Remarks~\ref{RmrkLinDependOnParam} and
\ref{Rmrk_LinDependOnParam_Global}, then conditions (\ref{PenTerm_FirstPertCond}) and (\ref{PenTerm_SecondPertCond})
are satisifed with $a(x^*) = a = 1$.
}

\noindent{(ii) In the above proposition, the assumption that $A$ is compact can be replaced by the assumptions that $X$
is a finite dimensional normed space, and the set $\{ x \in A \mid F_{\mu}(x) < f^* \}$ is bounded for some 
$\mu > 0$. Indeed, one should simply use Theorem~\ref{ThFinDimGenCase} instead of Theorem~\ref{ThExPenFuncCompactSet}.
}

\noindent{(iii) Proposition~\ref{Prp_OptimValFunc_OmegaCalmness} can be proved under different assumptions. Namely, let
$A$ be a compact set, $f$ and $\varphi$ be l.s.c. on $A$, and $\omega \colon \mathbb{R}_+ \to \mathbb{R}_+$ be a given
function. Suppose that for any $x \in A \setminus \Omega$ there exist $L(x) > 0$, $r(x) > 0$ and a neighbourhood
$V(x)$ of $x$ such that
$$
  \varphi(y) \le L(x) \omega(d(p, p^*)) \quad \forall y \in V(x) \cap \Omega(p) 
  \quad \forall p \in B(p^*, r(x)).
$$
Then arguing in a similar way to the proof of Theorem 2.5 from \cite{LowerOrderCalmness} one can verify that the
optimal value function $h$ is $\omega$-calm from below at $p^*$ if and only if the problem $(\mathcal{P}_{p^*})$ is
$\omega$-calm at every of its globally optimal solutions. We leave the proof of this result to the interested reader.
}
\end{remark}

\section{Local Minima of Penalty Functions}
\label{SectLocMinOfPenFunc}

The main goal of the exact penalty method is to solve a constrained optimization problem by constructing an
unconstrained optimization problem that has the same globally optimal solutions as the original problem. However,
usually, optimization algorithms can find only points of local minimum (or even only stationary points) of a nonconvex
function. Therefore, from the practical point of view, it is important to find conditions under which there are no local
minimizers (stationary points) of a penalty function outside the set of feasible points. These conditions ensure that an
optimization algorithm, when applied to a penalty function, indeed, finds a point of local minimum (or at least a
stationary point) of the initial problem. Some of such conditions were studied in 
\cite{Demyanov, DiPilloGrippo, DiPilloGrippo2, ExactBarrierFunc, Ye}. It is worth mentioning that any condition of this
type is usually very restrictive, since it normally consists of the assumption that some sort of a constraint
qualification holds at every infeasible point.

Let $(X, d)$ be a metric space. In this section, we present some simple sufficient conditions ensuring that
there are no stationary points of the penalty function $F_{\lambda}$ outside the set of feasible points $\Omega$. To
this end, recall that if $x \in X$ is a point of local minimum of $F_{\lambda}$, then 
$F_{\lambda}^{\downarrow}(x) \ge 0$. Therefore any point $x \in X$ such that $F_{\lambda}^{\downarrow}(x) \ge 0$ is
called \textit{an inf-stationary point} (or \textit{a lower semistationary point}, see~\cite{Giannessi}) of
$F_{\lambda}$.

\begin{remark}
It is easy to see that if $F_{\lambda}$ is the standard $\ell_1$ penalty function for a nonlinear programming problem,
then the inequality $F_{\lambda}^{\downarrow}(x) \ge 0$ holds true iff the Karush--Kuhn--Tucker conditions hold at the
point $x$. 
\end{remark}

The main result that we present here is rather restrictive, but it is a typical example of the conditions that ensure
that there are no inf-stationary points of the penalty function $F_{\lambda}$ outside $\Omega$.

\begin{proposition} \label{PrpStPointsPenFunc}
Let $f$ be Lipschitz continuous on $A \setminus \Omega$ with Lipschitz constant $L \ge 0$, and $\varphi$ be l.s.c.
on $A$. Suppose also that there exists $a > 0$ such that $\varphi_A^{\downarrow}(x) \le - a$ for any 
$x \in A \setminus \Omega$. If $x \in A$ is an inf-stationary point of $F_{\lambda}$ and $\lambda > L / a$, then 
$x \in \Omega$. In other words, there are no inf-stationary points of $F_{\lambda}$ outside $\Omega$ for any 
$\lambda > L / a$.
\end{proposition}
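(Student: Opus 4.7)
My plan is to argue by contradiction: assume there exists $x \in A \setminus \Omega$ with $F_{\lambda}^{\downarrow}(x) \ge 0$ and derive $F_{\lambda}^{\downarrow}(x) < 0$ under the hypothesis $\lambda > L/a$. The strategy is to exhibit a sequence $\{ y_n \} \subset A$, $y_n \to x$, $y_n \ne x$, along which the difference quotient of $F_{\lambda}$ is bounded above by a strictly negative constant. Since $F_{\lambda}^{\downarrow}$ is a $\liminf$, this immediately contradicts inf-stationarity.

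The sequence will come from the hypothesis $\varphi^{\downarrow}_A(x) \le -a$. First I would observe that, because $x \notin \Omega$, one has $\varphi(x) > 0$, and by lower semicontinuity of $\varphi$ on $A$ there is a neighborhood $U$ of $x$ in which $\varphi(y) > 0$ for all $y \in U \cap A$; hence every point of $U \cap A$ near $x$ lies in $A \setminus \Omega$. Next, by the very definition of the rate of steepest descent as a $\liminf$, there exists a sequence $\{ y_n \} \subset A$ with $y_n \ne x$, $y_n \to x$, and
$$
  \frac{\varphi(y_n) - \varphi(x)}{d(y_n, x)} \xrightarrow[n \to \infty]{} \varphi^{\downarrow}_A(x) \le -a.
$$
For all sufficiently large $n$ we have $y_n \in U \cap A \subset A \setminus \Omega$, so both $x$ and $y_n$ lie in $A \setminus \Omega$, and the Lipschitz continuity of $f$ on $A \setminus \Omega$ yields $f(y_n) - f(x) \le L \, d(y_n, x)$.

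Combining these two estimates, for every $\varepsilon > 0$ and for all sufficiently large $n$,
$$
  \frac{F_{\lambda}(y_n) - F_{\lambda}(x)}{d(y_n, x)}
  = \frac{f(y_n) - f(x)}{d(y_n, x)} + \lambda \frac{\varphi(y_n) - \varphi(x)}{d(y_n, x)}
  \le L + \lambda(-a + \varepsilon).
$$
Passing to the $\liminf$ as $n \to \infty$ and then letting $\varepsilon \to 0$ gives
$$
  F_{\lambda}^{\downarrow}(x) \le L - \lambda a.
$$
Since $\lambda > L/a$, the right-hand side is strictly negative, contradicting $F_{\lambda}^{\downarrow}(x) \ge 0$. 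Therefore no inf-stationary point of $F_{\lambda}$ can lie in $A \setminus \Omega$.

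The only subtle point is step two, namely guaranteeing that the sequence $\{ y_n \}$ realizing $\varphi^{\downarrow}_A(x)$ can be assumed to lie in $A \setminus \Omega$ rather than just in $A$; this is precisely where the lower semicontinuity of $\varphi$ is used, together with the fact that $\varphi(x) > 0$. Everything else reduces to pairing the Lipschitz upper bound for $f$ with the descent bound for $\varphi$ inside a single difference quotient, which is routine.
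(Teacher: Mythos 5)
Your proof is correct and follows essentially the same route as the paper's: extract a descent sequence for $\varphi$ from $\varphi^{\downarrow}_A(x) \le -a$, use $\varphi(x) > 0$ together with lower semicontinuity to keep that sequence in $A \setminus \Omega$, pair it with the Lipschitz bound on $f$, and conclude $F_{\lambda}^{\downarrow}(x) \le L - \lambda a < 0$. The only cosmetic difference is that you absorb the approximation into an $\varepsilon$ while the paper works with an auxiliary constant $\overline{a} \in (0, a)$ and lets $\overline{a} \to a$, which is the same device.
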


\begin{proof}
Let $x \in A \setminus \Omega$, and $\overline{a} \in (0, a)$ be arbitrary. Then $\varphi^{\downarrow}_A(x) \le -a$, and
by the definition of rate of steepest descent there exists a sequence $\{ x_n \} \subset A$ converging to $x$ such that
\begin{equation} \label{RofSteepDescEstim}
  \varphi(x_n) - \varphi(x) \le - \overline{a} d(x_n, x) \quad \forall n \in \mathbb{N}.
\end{equation}
Moreover, taking into account the facts that $\varphi(x) > 0$, and $\varphi$ is l.s.c. on $A$ one can suppose
that $\varphi(x_n) > 0$ or, equivalently, $x_n \in A \setminus \Omega$ for all $n \in \mathbb{N}$. Therefore applying
the Lipschitz continuity of $f$ on $A \setminus \Omega$ and (\ref{RofSteepDescEstim}) one gets
$$
  F_{\lambda}(x_n) - F_{\lambda}(x) \le (L - \overline{a} \lambda) d(x_n, x) \quad \forall n \in \mathbb{N},
$$
which yeilds $F_{\lambda}^{\downarrow}(x) \le L - \overline{a} \lambda$. Consequently, 
for any $\lambda > L / \overline{a}$ one has $F_{\lambda}^{\downarrow}(x) < 0$, which implies the desired result due to
the fact that $\overline{a} \in (0, a)$ was chosen arbitrary.  
\end{proof}

In the case when $X$ is a finite dimensional normed space, one can give a little more convenient for applications
formulation of the proposition above, since in this formulation the inequality $\varphi^{\downarrow}(x) \le -a$ is
supposed to hold only locally. 

Denote by $\interior \Omega$ the interior of the set $\Omega$.

\begin{corollary}
Let $X$ be a finite dimensional normed space, $A$ be a closed set, $Q \subset A$ be a given set, $\varphi$ be l.s.c.
on $A$, and $f$ be Lipschitz continuous on any bounded subset of $A \setminus \interior \Omega$. Suppose also that 
for any $x \in A \setminus (\interior \Omega \cup Q)$ there exist $r > 0$ and $a > 0$ such
that $\varphi_A^{\downarrow}(y) \le - a$ for any $y \in B(x, r) \cap (A \setminus \Omega)$. Then for any bounded subset
$C$ of the set $A$ and for any open set $U$ such that $Q \subset U$ there exists $\lambda_0 \ge 0$ such that if 
$x \in C$ is an inf-stationary point of $F_{\lambda}$ and $\lambda \ge \lambda_0$, then $x \in \Omega \cup U$.
\end{corollary}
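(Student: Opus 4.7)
The plan is to argue by contradiction and reduce the statement to a uniform version of Proposition~\ref{PrpStPointsPenFunc} applied on a single compact set. Specifically, I would set $K = \overline{C} \setminus U$ and prove that for all sufficiently large $\lambda$ no point of $K \setminus \Omega$ can be inf-stationary for $F_{\lambda}$. This immediately yields the corollary, since any $x \in C$ with $x \notin \Omega \cup U$ satisfies $x \in K \setminus \Omega$.

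Since $C$ is bounded in the finite-dimensional space $X$ and $A$ is closed, $\overline{C}$ is compact and contained in $A$, so $K$ is compact, and hence $K \setminus \interior \Omega$ is compact too. Because $Q \subset U$ one has $K \cap Q = \emptyset$, so $K \setminus \interior \Omega \subset A \setminus (\interior \Omega \cup Q)$, which is precisely the region in which the hypothesis on $\varphi_A^{\downarrow}$ is in force. For each $x \in K \setminus \interior \Omega$ take the neighbourhood $B(x, r(x))$ and constant $a(x) > 0$ from the hypothesis, extract a finite subcover $\{ B(x_i, r(x_i)) \}_{i = 1}^m$ of $K \setminus \interior \Omega$, and set $a = \min_i a(x_i) > 0$. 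Every $y \in K \setminus \Omega$ lies in some $B(x_i, r(x_i)) \cap (A \setminus \Omega)$, and therefore $\varphi_A^{\downarrow}(y) \le -a$.

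For the Lipschitz side, fix a bounded open neighbourhood $V$ of $K$; then $V \cap (A \setminus \interior \Omega)$ is a bounded subset of $A \setminus \interior \Omega$, and by hypothesis $f$ has some Lipschitz constant $L$ on it. Given $y \in K \setminus \Omega$ and $\overline{a} \in (0, a)$, the definition of the rate of steepest descent, together with the lower semicontinuity of $\varphi$ and the strict inequality $\varphi(y) > 0$, produces a sequence $\{ y_n \} \subset A \setminus \Omega$ with $y_n \to y$ and
$$
  \varphi(y_n) - \varphi(y) \le -\overline{a}\, d(y_n, y)
$$
for all large $n$. For such $n$ we have $y_n \in V \cap (A \setminus \interior \Omega)$, so combining the Lipschitz estimate for $f$ with the above inequality yields $F_{\lambda}(y_n) - F_{\lambda}(y) \le (L - \overline{a} \lambda)\, d(y_n, y)$, hence $F_{\lambda}^{\downarrow}(y) \le L - \overline{a} \lambda$. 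Taking $\lambda_0$ strictly greater than $L / a$ and letting $\overline{a} \uparrow a$, one concludes that $F_{\lambda}^{\downarrow}(y) < 0$ for every $\lambda \ge \lambda_0$ and every $y \in K \setminus \Omega$, which completes the argument.

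The main technical obstacle is securing uniform constants $a$ and $L$, because the descent hypothesis on $\varphi$ is purely local and $f$ is only Lipschitz on \emph{bounded} subsets of $A \setminus \interior \Omega$. Compactness of $K \setminus \interior \Omega$ (which hinges on the finite-dimensionality of $X$ and the closedness of $A$) delivers uniformity of $a$ via a finite subcover, while the choice of a single bounded open neighbourhood $V$ of $K$ provides a uniform $L$, once one checks that the approximating sequences $\{ y_n \}$ can be kept inside $V$. A small subtlety is that the cover must be taken over $K \setminus \interior \Omega$ rather than $K \setminus \Omega$: boundary points of $\Omega$ can lie in $K$ and are needed in order to cover points of $K \setminus \Omega$ that are arbitrarily close to $\partial \Omega$.
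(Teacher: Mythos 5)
Your argument is correct and follows essentially the same route as the paper: compactness of the relevant set (there $(A\cap B(0,R))\setminus(\interior\Omega\cup U)$, in your case $\overline{C}\setminus U$ minus $\interior\Omega$) plus a finite subcover to obtain a uniform descent constant $a$, a Lipschitz constant of $f$ on a bounded subset of $A\setminus\interior\Omega$, and then a rerun of the proof of Proposition~\ref{PrpStPointsPenFunc}. Your write-up merely makes explicit the details the paper leaves to the reader (covering $K\setminus\interior\Omega$ rather than $K\setminus\Omega$, and keeping the approximating sequences inside the bounded set where $f$ is Lipschitz), and these are handled correctly.
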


\begin{proof}
Let $C \subset A$ be a bounded set, and $U$ be an open set such that $Q \subset U$. Then there exists $R > 0$ such that
$C \subset A \cap B(0, R)$. Recall that $X$ is a finite dimensional space. Therefore the set
$$
  K = (A \cap B(0, R)) \setminus (\interior \Omega \cup U)
$$
is compact. Applying the compactness of the set $K$, and the assumptions on the function $\varphi$ one can easily verify
that there exists $a > 0$ such that $\varphi_A^{\downarrow}(x) \le - a$ for any $x \in (A \cap B(0, R)) \setminus
(\Omega \cup U)$. Then a fortiori $\varphi_A^{\downarrow}(x) \le - a$ for any $x \in C \setminus (\Omega \cup U)$. Hence
repeating the proof of Proposition~\ref{PrpStPointsPenFunc} one gets the required result.  
\end{proof}

\begin{remark}
Let us explain the meaning of the corollary above in the case of the classical mathematical programming problem.
Let $F_{\lambda}$ be the $\ell_1$ penalty function for the nonlinear programming problem
$$
  \min f(x) \quad \text{subject to} \quad h_i(x) = 0, \quad g_j(x) \le 0, \quad
  i \in I, \: j \in J,
$$
i.e. $\varphi(x) = \sum_{i = 1}^n |h_i(x)| + \sum_{j = 1}^m \max\{ g_j(x), 0 \}$. Here $I = \{ 1, \ldots, n \}$,
$J = \{ 1, \ldots, m \}$, and all functions $f$, $h_i$ and $g_j$ are continuously differentiable on $\mathbb{R}^d$.
Then one can show that if the Mangasarian--Fromowitz constraint qualification (MFCQ) is satisfied at a point
$x \notin \Omega$, then $\varphi^{\downarrow}(x) < 0$. Moreover, from \cite{HanMangasarian}, Theorem 2.2 (see also
\cite{DiPilloGrippo}, Lemma 3.1) it follows that in this case there exist $a > 0$ and a neighbourhood $V$ of $x$ such
that $\varphi^{\downarrow}(y) \le - a$ for all $y \in V \setminus \Omega$. 

Denote by $Q$ the set of all those $x$ at which MFCQ does not hold true. Then by the corollary above for any bounded
subset $C$, and for any open set $U$ such that $Q \subset U$ there exists $\lambda_0 \ge 0$ such that if $x \in C$ is an
inf-stationary point of $F_{\lambda}$, and $\lambda \ge \lambda_0$, then either $\varphi(x) = 0$ or $x \in U$. In other
words, if $x$ is an inf-stationary point of the penalty function $F_{\lambda}$ for some sufficiently large 
$\lambda \ge 0$, then either $x$ is feasible or $x$ belongs to an arbitrary small neighbourhood of a point at which 
MFCQ is not satisfied.
\end{remark}

\section*{Acknowledgements}

The author wishes to express sincere gratitude and thanks to the late professor V.F.~Demyanov. His advice,
encouragement, as well as, inspiring lectures had a great influence on the author's decision of becoming a professional
mathematician. In addition, the idea of writing this article arose from the author's desire to refine some of the very
interesting results of professor V.F.~Demyanov on exact penalty functions.

\bibliographystyle{abbrv}  
\bibliography{ExactLinearPenFunc_bibl}

\end{document}